\documentclass[11pt]{article}%
  \usepackage[a4paper,margin=1in]{geometry}%
\usepackage[utf8]{inputenc}
\usepackage{amsmath,amssymb,amsfonts,amsthm,mathtools}
\usepackage{bm,mathrsfs,booktabs,array,enumitem,microtype}
\usepackage{graphicx,xcolor}
\usepackage[hidelinks]{hyperref}
\allowdisplaybreaks

\newtheorem{theorem}{Theorem}[section]
\newtheorem{proposition}[theorem]{Proposition}
\newtheorem{lemma}[theorem]{Lemma}
\newtheorem{corollary}[theorem]{Corollary}
\newtheorem{assumption}[theorem]{Assumption}
\theoremstyle{definition}

\newtheorem{remark}[theorem]{Remark}

\newcommand{\RR}{\mathbb{R}}
\newcommand{\LT}{\mathscr{L}}
\newcommand{\FT}{\mathscr{F}}
\newcommand{\Xspace}{L^1(\RR^n)\cap L^\infty(\RR^n)}
\newcommand{\norm}[1]{\left\lVert #1\right\rVert}
\newcommand{\ip}[2]{\left\langle #1,#2\right\rangle}
\newcommand{\e}{\mathrm{e}}
\newcommand{\1}{\mathbf{1}}
\newcommand{\Aa}{\mathcal A}
\newcommand{\Nn}{\mathcal N}
\newcommand{\Log}{\operatorname{Log}}

\title{Well-posedness and Blow-up in a semilinear heat equation with variable-order Scarpi memory}
\author{Pu Yuan\thanks{Institute of Mathematics, Utrecht University, The Netherlands.}
\and P. A. Zegeling\thanks{Institute of Mathematics, Utrecht University, The Netherlands.}}
\date{}

\begin{document}
\maketitle

\begin{abstract}
We study the semilinear heat equation ${}^{S}D_0^{\alpha(t)}u=\Delta u+u^p$ on $\RR^n$,
where $p>1$ and the Scarpi order changes exponentially from
$\alpha_1$ to $\alpha_2$, with $0<\alpha_1,\alpha_2<1$.
We establish local and maximal mild well-posedness for abstract Scarpi--Volterra equations. For the whole-space heat problem,
positive resolvent families yield nonnegative solutions, comparison, a mass identity, and an $L^\infty$ blow-up alternative.
To study finite-time growth, we use a Gaussian version of Kaplan's weighted-moment method.
It reduces the PDE to a scalar nonlinear Volterra inequality and requires no pointwise lower estimate for the non-self-similar Scarpi heat kernel.
Consequently, every nontrivial solution blows up in finite time when $1<p<1+2/n$, and sufficiently large data blow up for every $p>1$.
For $u_0=A\varphi$ with fixed nonzero $0\le\varphi\in L^1\cap L^\infty$, the maximal lifespan satisfies
\[
 T_A\asymp A^{-(p-1)/\alpha_1}
 \qquad(A\to\infty),
\]
while a subcritical small-amplitude upper bound is governed by the long-time order $\alpha_2$.
The lifespan bounds are expressed through the inverse of the integrated memory.
In the numerical section, we complement these estimates by comparing the Scarpi dynamics with both Caputo endpoint models.
Continuous Laplace inversion shows that the logarithmic slope of the integrated memory varies nonmonotonically across the transition. At large amplitudes, the growth thresholds approach those of the Caputo $\alpha_1$ model. 
A nonlinear space--time rescaling probes the long-time regime and reveals nonmonotone threshold-time ratios relative to Caputo $\alpha_2$. For fixed-width initial profiles, increasing the transition rate delays the prescribed growth threshold at larger tested amplitudes but advances it at smaller ones.
\par\medskip
\noindent\textbf{Keywords:} Variable-order fractional derivative, semilinear heat equation, finite-time blow-up, lifespan estimates, convolution quadrature.

\end{abstract}

\section{Introduction}
\label{sec:introduction}

We study how a transition in temporal memory affects nonlinear growth and finite-time blow-up in the Cauchy problem
\begin{equation}
\label{eq:model}
\begin{cases}
 {}^{S}D_0^{\alpha(t)}u(x,t)=\Delta u(x,t)+u(x,t)^p,
 &x\in\RR^n,\quad t>0,\\
 u(x,0)=u_0(x)\ge0,
 &x\in\RR^n,
\end{cases}
\end{equation}
where $p>1$, $u_0\in L^1(\RR^n)\cap L^\infty(\RR^n)$, and
\begin{equation}
\label{eq:transition}
 \alpha(t)=\alpha_2+(\alpha_1-\alpha_2)\e^{-ct},
 \qquad 0<\alpha_1,\alpha_2<1,\quad c>0.
\end{equation}
The Scarpi derivative ${}^{S}D_0^{\alpha(t)}$ defines the memory through the Laplace transform of the order function \cite{Scarpi1972,GarrappaGiustiMainardi2021,GarrappaGiusti2023}.
For \eqref{eq:transition}, the reciprocal symbol of its integral kernel is
\[
 \Psi(z)^{-1}=z^{(\alpha_2c+\alpha_1z)/(c+z)}.
\]
Its high- and low-frequency limits correspond to the Caputo orders $\alpha_1$ and $\alpha_2$.
We refer to these as the initial order and the long-time order.
The resulting operator retains a convolution structure, rather than inserting $\alpha(t)$ directly into a constant-order Caputo formula.

For the classical heat equation $u_t=\Delta u+u^p$, the Fujita exponent $1+2/n$ separates the range of universal finite-time blow-up from the range admitting global solutions for small initial profiles.
\cite{Fujita1966}.
Kaplan's weighted-moment argument provides a direct way to compare nonlinear growth with diffusion \cite{Kaplan1963}.
For time-fractional Caputo equations, weighted-function methods and heat-kernel estimates have established blow-up criteria, global-existence results and lifespan bounds \cite{VergaraZacher2017,FloridiaLiuYamamoto2023, HuangLiuYamamoto2024}.
For the whole-space problem with the standard Laplacian, the Fujita exponent remains $1+2/n$, although the critical case differs from the classical heat equation and may admit both global and blowing-up solutions \cite{cortazar2024semilinear}.
Blow-up and global existence have also been studied for more general time-fractional models.
In particular, Li \cite{LiLi2022PsiCaputo} considered whole-space semilinear diffusion equations and systems with a time $\psi$-Caputo derivative.
The $\psi$-Caputo operator is associated with a prescribed change of time in a fixed-order fractional derivative, whereas the Scarpi operator considered here is generated by a convolution kernel with distinct short- and long-time power behaviors.
This raises the question of how the classical Fujita mechanism and the Caputo lifespan scales are modified when the memory itself undergoes a transition between two fractional orders.

A variable Scarpi transition introduces two difficulties.
Its memory kernel is not self-similar, so the constant-order scaling formulas do not describe the intermediate transition phase.
In addition, the endpoint powers alone do not supply the positivity needed for comparison and weighted-moment arguments.
The nonlinear problem therefore requires a solution framework that separates general well-posedness from the additional assumptions used in blow-up analysis.
Within that framework, the central questions are which parts of the spatial blow-up mechanism persist, which memory order governs the lifespan scales, and how the transition affects growth and spatial spreading between those scales.

We first establish local and maximal mild well-posedness for abstract Scarpi--Volterra equations with sectorial linear operators satisfying \eqref{eq:sectorial-resolvent} and nonlinearities that are Lipschitz on bounded sets.
The construction gives continuous dependence and a norm blow-up alternative, with continuation retaining the full memory history.
These results use the local bounds of the linear resolvent families and do not require positivity.
For the heat equation, we then impose the Bernstein-admissibility condition in Assumption~\ref{ass:admissible}.
Subordination yields positive heat resolvent families with exact spatial masses.
This structure gives nonnegative mild solutions for initial values in $L^1\cap L^\infty$, a comparison principle, a mass identity, and an $L^\infty$ continuation criterion.

The blow-up argument combines a normalized Gaussian test function with a scalar Volterra iteration.
The Gaussian controls the diffusion term and closes the reaction term by Jensen's inequality.
The iteration uses a lower weak scaling bound for
\[
 K(t)=\int_0^t\psi(s)\,ds,
 \qquad \widehat\psi=\Psi,
\]
so no pointwise lower estimate for the Scarpi heat kernel is needed.
Under Bernstein admissibility, every nontrivial nonnegative solution blows up in finite time for $1<p<1+2/n$, and sufficiently large multiples of any nonzero nonnegative initial profile blow up for every $p>1$.
The same argument gives a sufficient mass criterion at $p=1+2/n$.
The subcritical range comes from the spatial Gaussian balance, whereas $K$ determines the time scale in the resulting estimates.

The local-existence lower bound and Gaussian upper bound are expressed through $K^{-1}$.
For a fixed nonzero $0\le\varphi\in L^1\cap L^\infty$, let $T_A$ denote the maximal lifespan for $u_0=A\varphi$.
For fixed admissible memory parameters, we obtain
\[
 T_A\asymp A^{-(p-1)/\alpha_1}
 \quad\text{ as }A\to\infty.
\]
For $u_0=A_\varepsilon\varphi$ in the subcritical range, we also prove
\[
 T_\varepsilon\lesssim
 A_\varepsilon^{-\frac{1}{\alpha_2(1/(p-1)-n/2)}}
 \quad\text{ as }A_\varepsilon\downarrow0.
\]
Thus the initial order determines the matching large-amplitude lifespan scale, while the long-time order enters the small-amplitude upper bound.
The inverse-$K$ estimates retain the memory transition between the two asymptotic limits.

The numerical study examines the growth and spatial effects that these lifespan bounds do not fully determine.
We combine Fourier spatial approximation with a fully implicit backward-Euler scheme from Lubich's convolution quadrature framework \cite{Lubich1986,Lubich1988I,Lubich1988II}.
The computed Scarpi solutions are compared with both constant-order Caputo models.
Continuous Laplace inversion shows that the logarithmic slope of $K$ can fall outside the interval $[\alpha_1,\alpha_2]$, despite the monotonicity of $\alpha(t)$.
At large amplitudes, fixed-profile threshold times and rescaled solutions become closer to those of Caputo $\alpha_1$ over the computed range.
A separate nonlinear time and space rescaling examines the long-time order.
The computed solutions become closer to Caputo $\alpha_2$ on a common rescaled interval, while their threshold ratios cross one and approach it nonmonotonically.

Fixed-width experiments distinguish these rescaling effects from fixed-profile amplitude variation.
They show that increasing the transition rate can delay a fixed relative growth threshold at larger amplitudes and advance it at smaller amplitudes.
Profiles compared at equal peak height also reveal differences in spatial spreading.
These observations complement the theoretical lifespan scales by providing an amplitude-dependent description of the memory transition.
The variable numerical parameters are selected by the finite-range checks in Appendix~\ref{app:bernstein-screen}.

Section~\ref{sec:scarpi} constructs the Scarpi kernels and resolvent families and establishes the integrated-memory estimates.
Section~\ref{sec:wellposedness} develops the abstract and whole-space solution theories and the weak test identity.
Sections~\ref{sec:blowup} and~\ref{sec:lifespan} prove the blow-up criteria and lifespan bounds.
Section~\ref{sec:numerics} presents the numerical method and the Scarpi--Caputo comparisons.

\section{Scarpi kernels, resolvents and memory scales}
\label{sec:scarpi}

\subsection{Sonine kernels and resolvent families}
\label{subsec:sonine-symbol}

For comparison, the constant-order Riemann--Liouville integral and Caputo operator of order $\alpha\in(0,1)$ are generated by the Sonine pair \cite{SamkoKilbasMarichev1993,Diethelm2010}
\[
 \psi_\alpha(t)=\frac{t^{\alpha-1}}{\Gamma(\alpha)},
 \qquad
 \phi_\alpha(t)=\frac{t^{-\alpha}}{\Gamma(1-\alpha)}.
\]
Their Laplace transforms satisfy $\widehat{\psi_\alpha}(z)=z^{-\alpha}$, $\widehat{\phi_\alpha}(z)=z^{\alpha-1}$, and $\widehat{\psi_\alpha}(z)\widehat{\phi_\alpha}(z)=z^{-1}$.
Scarpi's construction retains this Sonine algebra while replacing the constant exponent by a Laplace-domain order symbol \cite{Scarpi1972,GarrappaGiustiMainardi2021,GarrappaGiusti2023}.

For the exponential transition \eqref{eq:transition}, let
\[\widehat{\alpha}(z)=(\LT_t\alpha)(z)
=\frac{\alpha_2c+\alpha_1z}{z(c+z)},
\qquad
B(z)=z\widehat{\alpha}(z)=\frac{\alpha_2c+\alpha_1z}{c+z}.\]
All complex powers use the principal branch of $\Log z$ on the chosen Laplace inversion domain.
Set
\[\Psi(z)=z^{-B(z)},
\qquad
\Phi(z)=z^{B(z)-1}=\frac{1}{z\Psi(z)}.\]

We take $\Log z=\log|z|+i\operatorname{Arg}z$, where $\operatorname{Arg}z\in(-\pi,\pi)$, so that \(B\), \(\Psi\), \(\Phi\), and \(z\mapsto\Psi(z)^{-1}\) are analytic on \(\mathbb C\setminus(-\infty,0]\).
The symbols satisfy
\[
\Psi(z)\Phi(z)=\frac1z.
\]
A pair \(k,\ell\in L^1_{\mathrm{loc}}(0,\infty)\) is called a Sonine pair if
\[
(k*\ell)(t)=1
\quad\text{for almost every }t>0.
\]
The scalar kernels associated with \(\Psi\) and \(\Phi\), together with their convolutional inversion properties, are constructed in Proposition~\ref{prop:scarpi-kernels}.

The explicit symbol has two distinct power laws:
\begin{equation}
\label{eq:symbol-asymp}
\Psi(s)^{-1}\sim s^{\alpha_1}\quad(s\to\infty),
\qquad
\Psi(s)^{-1}\sim s^{\alpha_2}\quad(s\downarrow0).
\end{equation}
Indeed,
\begin{equation}
\label{eq:B-difference}
B(z)-\alpha_1=\frac{c(\alpha_2-\alpha_1)}{z+c},
\qquad
B(z)-\alpha_2=\frac{(\alpha_1-\alpha_2)z}{z+c},
\end{equation}
so $(B(s)-\alpha_j)\log s\to0$ in the corresponding limit.
The high-frequency asymptotics  controls the singularity of the forcing family $P_{\mathcal{A}}(t)$ at the origin, whereas the low-frequency asymptotics  governs the long-time integrated memory used in the lifespan analysis.

We first construct the linear resolvent families independently of positivity.
Let $E$ be a complex Banach space and let $\Aa:D(\Aa)\subset E\to E$ be densely defined and closed.
Assume that $\Aa$ is nonpositive sectorial in the following sense: for every $\eta\in(0,\pi)$ there is $M_\eta\ge1$ such that
\begin{equation}
\label{eq:sectorial-resolvent}
\Sigma_{\pi-\eta}\subset\rho(\Aa),
\qquad
\norm{(\lambda I-\Aa)^{-1}}_{\mathcal L(E)}
\le M_\eta|\lambda|^{-1},
\qquad \lambda\in\Sigma_{\pi-\eta},
\end{equation}
where $\Sigma_\vartheta=\{\lambda\ne0:|\operatorname{Arg}\lambda|<\vartheta\}$.
The Laplacian on $L^q(\mathbb{R}^n)$ for $1 \le q < \infty$, and on $\mathrm{BUC}(\mathbb{R}^n)$, equipped with its standard generator domain, fits into this setting \cite{Pazy1983,Pruss1993}.

Fix $\theta\in(\pi/2,\pi)$ for
\[
 \eta_\theta=\frac{\pi-\alpha_1\theta}{3}>0.
\]
For $\kappa>0$, let $\Gamma_{\theta,\kappa}$ be the oriented contour
\[
 \{re^{-i\theta}:\infty>r\ge\kappa\}
 \cup\{\kappa e^{i\chi}:-\theta\le\chi\le\theta\}
 \cup\{re^{i\theta}:\kappa\le r<\infty\}.
\]

\begin{lemma}
\label{lem:scarpi-sector}
There is $\kappa_0\ge\max\{1,2c\}$ such that, whenever $z=re^{i\chi}$, $r\ge\kappa_0$, and $|\chi|\le\theta$,
\begin{equation}
\label{eq:exterior-Psi-inverse}
|\operatorname{Arg}\Psi(z)^{-1}|\le\pi-2\eta_\theta,
\qquad
e^{-\eta_\theta}r^{\alpha_1}
\le |\Psi(z)^{-1}|\le
e^{\eta_\theta}r^{\alpha_1}.
\end{equation}
Consequently, $\Psi(z)^{-1}\in\rho(\Aa)$ and
\begin{equation}
\label{eq:exterior-resolvent}
\norm{(\Psi(z)^{-1}I-\Aa)^{-1}}_{\mathcal L(E)}
\le C_\theta |\Psi(z)|
\le C_\theta r^{-\alpha_1}.
\end{equation}
\end{lemma}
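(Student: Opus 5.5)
We work directly with the explicit formula $\Psi(z)^{-1}=z^{B(z)}=\exp\bigl(B(z)\Log z\bigr)$. By \eqref{eq:B-difference} we write
\[
B(z)\Log z=\alpha_1\Log z+\delta(z),\qquad \delta(z)=\frac{c(\alpha_2-\alpha_1)}{z+c}\,\Log z .
\]
The main term gives $|z^{\alpha_1}|=r^{\alpha_1}$ and $\operatorname{Arg} z^{\alpha_1}=\alpha_1\chi$, with $|\alpha_1\chi|\le\alpha_1\theta$. The perturbation then gives
\[
\log|\Psi(z)^{-1}|=\alpha_1\log r+\operatorname{Re}\delta(z),\qquad \arg\Psi(z)^{-1}=\alpha_1\chi+\operatorname{Im}\delta(z)\pmod{2\pi}.
\]
The whole lemma therefore reduces to showing that $|\delta(z)|\le\eta_\theta$ once $r\ge\kappa_0$.

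For $r\ge 2c$ we have $|z+c|\ge r-c\ge r/2$. We also have $|\Log z|\le\log r+\pi$, since $|\chi|\le\theta<\pi$. Hence
\[
|\delta(z)|\le 2c|\alpha_2-\alpha_1|\,\frac{\log r+\pi}{r}.
\]
This bound tends to $0$ as $r\to\infty$. We choose $\kappa_0\ge\max\{1,2c\}$ so large that the right-hand side is at most $\eta_\theta$ for all $r\ge\kappa_0$; such a choice exists because $(\log r+\pi)/r$ is eventually decreasing.

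This yields both parts of \eqref{eq:exterior-Psi-inverse}. For the modulus, $e^{-\eta_\theta}r^{\alpha_1}\le|\Psi(z)^{-1}|\le e^{\eta_\theta}r^{\alpha_1}$. For the argument, $|\alpha_1\chi+\operatorname{Im}\delta|\le\alpha_1\theta+\eta_\theta=\pi-2\eta_\theta$, using $\alpha_1\theta=\pi-3\eta_\theta$. This quantity lies in $(-\pi,\pi)$, so it equals the principal argument and no branch ambiguity arises.

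For \eqref{eq:exterior-resolvent}, set $\lambda=\Psi(z)^{-1}$. Then $\lambda\ne0$ and $|\operatorname{Arg}\lambda|\le\pi-2\eta_\theta<\pi-\eta_\theta$, so $\lambda\in\Sigma_{\pi-\eta_\theta}\subset\rho(\Aa)$ by \eqref{eq:sectorial-resolvent} applied with $\eta=\eta_\theta$. The resolvent bound there gives
\[
\norm{(\lambda I-\Aa)^{-1}}\le M_{\eta_\theta}|\lambda|^{-1}=M_{\eta_\theta}|\Psi(z)|.
\]
The modulus bound gives $|\Psi(z)|\le e^{\eta_\theta}r^{-\alpha_1}$. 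So we take $C_\theta=M_{\eta_\theta}e^{\eta_\theta}$, which is at least $M_{\eta_\theta}$ since $e^{\eta_\theta}\ge1$. This constant works for both inequalities in \eqref{eq:exterior-resolvent}.

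The only step needing care is the uniform control of the logarithmic perturbation $\delta$, including the branch bookkeeping: the bound $|\operatorname{Arg}|<\pi$ is what makes the additive argument estimate legitimate. Everything else is routine.
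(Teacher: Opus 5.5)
Your proof is correct and follows the paper's argument. You split off $\delta(z)=(B(z)-\alpha_1)\Log z$ and bound it by $2c|\alpha_2-\alpha_1|(\log r+\pi)/r\le\eta_\theta$ for $r\ge\kappa_0$ (the paper uses $\log r+\theta$, to the same effect), use $\alpha_1\theta+\eta_\theta=\pi-2\eta_\theta<\pi$ to rule out branch wrapping, and then apply \eqref{eq:sectorial-resolvent} with $\eta=\eta_\theta$. Your explicit constant $C_\theta=M_{\eta_\theta}e^{\eta_\theta}$ gives both $\norm{(\Psi(z)^{-1}I-\Aa)^{-1}}\le C_\theta|\Psi(z)|$ and $\norm{(\Psi(z)^{-1}I-\Aa)^{-1}}\le C_\theta r^{-\alpha_1}$, which makes precise a constant the paper leaves implicit.
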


\begin{proof}
By \eqref{eq:B-difference}, for $r\ge2c$,
\[
 \bigl|(B(z)-\alpha_1)\Log z\bigr|
 \le \frac{2c|\alpha_2-\alpha_1|}{r}(\log r+\theta),
\]
uniformly for $|\chi|\le\theta$.
Choose $\kappa_0$ so that the right-hand side is at most $\eta_\theta$.
Since
\[
 B(z)\Log z=\alpha_1(\log r+i\chi)
 +(B(z)-\alpha_1)\Log z,
\]
we obtain
\[
 |\Im(B(z)\Log z)|
 \le\alpha_1\theta+\eta_\theta=\pi-2\eta_\theta<\pi
\]
and $|\Re(B(z)\Log z)-\alpha_1\log r|\le\eta_\theta$.
There is therefore no $2\pi$ wrapping of the principal argument, and \eqref{eq:exterior-Psi-inverse} follows.
Since $\Sigma_{\pi-2\eta_\theta}\subset\Sigma_{\pi-\eta_\theta}$, \eqref{eq:sectorial-resolvent}, applied to $\lambda=\Psi(z)^{-1}$, gives \eqref{eq:exterior-resolvent}.
\end{proof}

\begin{proposition}
\label{prop:scarpi-kernels}
Fix \(\theta\in(\pi/2,\pi)\), and let \(\kappa_0\) be given by Lemma~\ref{lem:scarpi-sector}.
For \(t>0\), define
\begin{equation}
\label{eq:scalar-scarpi-kernels}
\psi(t)
=
\frac{1}{2\pi i}
\int_{\Gamma_{\theta,\kappa_0}}
e^{zt}\Psi(z)d z,
\qquad
\phi(t)
=
\frac{1}{2\pi i}
\int_{\Gamma_{\theta,\kappa_0}}
e^{zt}\Phi(z)d z .
\end{equation}
The contour integrals converge absolutely and define real-valued functions of exponential order.
For every \(T>0\), there is a constant \(C_T>0\) such that
\begin{equation}
\label{eq:scarpi-kernel-local-bounds}
|\psi(t)|
\le C_T t^{\alpha_1-1},
\qquad
|\phi(t)|
\le C_T t^{-\alpha_1},
\qquad
0<t\le T.
\end{equation}
In particular, $\psi,\phi\in L^1_{\mathrm{loc}}(0,\infty).$ For \(\Re s\) sufficiently large,
\begin{equation*}
\widehat\psi(s)=\Psi(s),
\qquad
\widehat\phi(s)=\Phi(s)=\frac{1}{s\Psi(s)} .
\end{equation*}
Moreover, \(\psi\) and \(\phi\) form a Sonine pair:
\begin{equation*}
(\psi*\phi)(t)=1
\quad\text{for almost every }t>0.
\end{equation*}
\end{proposition}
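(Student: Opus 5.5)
The plan is to work directly with the contour representation and use Lemma~\ref{lem:scarpi-sector} for all exterior estimates. On $\Gamma_{\theta,\kappa_0}$ we have $|z|\ge\kappa_0$ and $|\operatorname{Arg}z|\le\theta$. By \eqref{eq:exterior-Psi-inverse}, this gives $|\Psi(z)|\le e^{\eta_\theta}|z|^{-\alpha_1}$ and $|\Phi(z)|=|z|^{-1}|\Psi(z)|^{-1}\le e^{\eta_\theta}|z|^{\alpha_1-1}$. On the rays $z=re^{\pm i\theta}$ we have $|e^{zt}|=e^{rt\cos\theta}$ with $\cos\theta<0$. The integrands therefore decay exponentially for each $t>0$, and the integrals converge absolutely. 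On the arc, $|e^{zt}|\le e^{\kappa_0 t}$, which yields the exponential order bound $|\psi(t)|,|\phi(t)|\le C(t)e^{\kappa_0 t}$, with $C(t)$ bounded for $t$ away from $0$. Real-valuedness follows from the symmetry $\overline{\Psi(\bar z)}=\Psi(z)$, since $B$ has real coefficients and the principal logarithm satisfies $\overline{\Log\bar z}=\Log z$. The contour is also symmetric under conjugation with reversed orientation, so the integral equals its own conjugate.

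For the local bounds \eqref{eq:scarpi-kernel-local-bounds}, fix $T$ and $0<t\le T$. I split the contour into the arc and the rays. On the rays I substitute $r=\rho/t$:
\[
\int_{\kappa_0}^\infty e^{rt\cos\theta}r^{-\alpha_1}\,dr\le t^{\alpha_1-1}\int_0^\infty e^{\rho\cos\theta}\rho^{-\alpha_1}\,d\rho=C\,t^{\alpha_1-1}.
\]
The analogous computation for $\Phi$ has a factor $\rho^{\alpha_1-1}$, which is integrable at $0$, and produces $t^{-\alpha_1}$. The arc contributes at most $2\theta\kappa_0 e^{\kappa_0T}\sup_{\text{arc}}|\Psi|$, which is a constant. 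Since $t^{\alpha_1-1}\ge T^{\alpha_1-1}$ and $t^{-\alpha_1}\ge T^{-\alpha_1}$ on $(0,T]$, this constant is absorbed into $C_T t^{\alpha_1-1}$ and $C_T t^{-\alpha_1}$. Local integrability then follows because $\alpha_1-1>-1$ and $-\alpha_1>-1$.

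For the Laplace transforms, take real $s>\kappa_0$ (then all $\Re s$ large), with $s$ lying to the right of the contour. By Fubini, which is justified by the absolute bound $\int_0^\infty\!\int_\Gamma|e^{(z-s)t}\Psi(z)|\,|dz|\,dt<\infty$ because $\Re z\le\kappa_0<\Re s$ on $\Gamma$, we get
\[
\widehat\psi(s)=\frac1{2\pi i}\int_{\Gamma}\frac{\Psi(z)}{s-z}\,dz .
\]
I then close the contour on the right with large arcs $|z|=R$, $|\operatorname{Arg}z|\le\theta$, inside the region where $\Psi$ is analytic. The region enclosed is $\{|z|\ge\kappa_0,|\operatorname{Arg}z|<\theta\}$ truncated at $R$, which avoids $(-\infty,0]$. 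On these arcs $|\Psi(z)|/|s-z|=O(R^{-1-\alpha_1})$, so their contribution vanishes as $R\to\infty$. The residue at $z=s$, with the orientation accounted for, gives $\Psi(s)$. For $\Phi$ the arc integrand is $O(R^{\alpha_1-2})$, so the contribution also vanishes since $\alpha_1<1$. Hence $\widehat\phi(s)=\Phi(s)$. I expect the only delicate points in this step to be orientation bookkeeping and checking absolute integrability for Fubini near $t=0$. The latter uses the local bounds already established together with the exponential order bound.

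Finally, for the Sonine property, both kernels are $L^1_{\mathrm{loc}}$ and of exponential order, so $\psi*\phi$ is $L^1_{\mathrm{loc}}$ and of exponential order. Its Laplace transform for large real $s$ is
\[
\Psi(s)\Phi(s)=\frac1s=\widehat{1}(s).
\]
By uniqueness of the Laplace transform (Lerch's theorem), $\psi*\phi=1$ almost everywhere. I regard the contour deformation argument for the transforms as the main obstacle. The remaining steps are standard estimates once Lemma~\ref{lem:scarpi-sector} is available.
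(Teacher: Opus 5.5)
Your proposal is correct and follows the paper's proof: you use the exterior bounds of Lemma~\ref{lem:scarpi-sector}, Fubini plus the Cauchy formula for $\widehat\psi=\Psi$ and $\widehat\phi=\Phi$ (which you make explicit by closing with large arcs), Laplace uniqueness for the Sonine identity, and conjugation symmetry for real-valuedness. The only variation is in the local bounds, where you keep the fixed contour $\Gamma_{\theta,\kappa_0}$ and rescale $r=\rho/t$, whereas the paper switches to the $t$-dependent contour $\Gamma_{\theta,\kappa(t)}$ with $\kappa(t)=\max\{\kappa_0,t^{-1}\}$. Your shortcut is legitimate here because both ray weights $r^{-\alpha_1}$ and $r^{\alpha_1-1}$ are integrable at $r=0$, so the arc contributes only a constant, which $t^{\alpha_1-1}$ and $t^{-\alpha_1}$ absorb on $(0,T]$.
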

\begin{proof}
The scalar integrands in \eqref{eq:scalar-scarpi-kernels} are analytic in every exterior annular sector bounded by the admissible contours.
Cauchy's theorem therefore permits the replacement of \(\kappa_0\) by any \(\kappa\ge\kappa_0\).

Set $\kappa(t)=\max\{\kappa_0,t^{-1}\}.$ By Lemma~\ref{lem:scarpi-sector},
\[
|\Psi(z)|\le C|z|^{-\alpha_1},
\qquad
|\Phi(z)|=\frac{|\Psi(z)^{-1}|}{|z|}
\le C|z|^{\alpha_1-1}
\]
on \(\Gamma_{\theta,\kappa(t)}\).

Let \(q_\theta=-\cos\theta>0\).
On either ray,
\[
\int_{\kappa(t)}^\infty
e^{-q_\theta rt}r^{-\alpha_1}d r
\le Ct^{\alpha_1-1},
\]
and
\[
\int_{\kappa(t)}^\infty
e^{-q_\theta rt}r^{\alpha_1-1}d r
\le Ct^{-\alpha_1}.
\]
On the circular arc, the corresponding contributions are bounded by
\[
C\kappa(t)^{1-\alpha_1}e^{\kappa(t)t}
\quad\text{and}\quad
C\kappa(t)^{\alpha_1}e^{\kappa(t)t}.
\]
Since
\[
1\le \kappa(t)t\le\max\{1,\kappa_0T\},
\qquad 0<t\le T,
\]
these terms are bounded by \(C_Tt^{\alpha_1-1}\) and \(C_Tt^{-\alpha_1}\), respectively.
This proves \eqref{eq:scarpi-kernel-local-bounds} and hence the local integrability of both kernels.
The same fixed-contour estimates for \(t\ge1\) show that the kernels are of exponential order.

For \(\Re s\) sufficiently large, Fubini's theorem and the sectorial Cauchy formula give
\[
\begin{aligned}
\int_0^\infty e^{-st}\psi(t)d t
&=
\frac{1}{2\pi i}
\int_{\Gamma_{\theta,\kappa_0}}
\frac{\Psi(z)}{s-z}d z
=
\Psi(s),\\
\int_0^\infty e^{-st}\phi(t)d t
&=
\frac{1}{2\pi i}
\int_{\Gamma_{\theta,\kappa_0}}
\frac{\Phi(z)}{s-z}d z
=
\Phi(s).
\end{aligned}
\]
The Cauchy integrands on the rays are \(O(|z|^{-\alpha_1-1})\) and \(O(|z|^{\alpha_1-2})\), respectively, and both are integrable because \(0<\alpha_1<1\).

Finally,
\[
\widehat{\psi*\phi}(s)
=
\widehat\psi(s)\widehat\phi(s)
=
\Psi(s)\Phi(s)
=
\frac1s.
\]
Laplace-transform uniqueness yields \((\psi*\phi)(t)=1\) for almost every \(t>0\).
The conjugation identities
\[
\overline{\Psi(\overline z)}=\Psi(z),
\qquad
\overline{\Phi(\overline z)}=\Phi(z)
\]
and the symmetry of the contour show that both kernels are real-valued.
\end{proof}

Let \(E\) be a Banach space and \(T>0\).
For \(g\in L^1(0,T;E)\) and \(u\in W^{1,1}(0,T;E)\), define
\begin{equation*}
{}^{S}I_0^{\alpha(t)}g
=
\psi*g,
\qquad
{}^{S}D_0^{\alpha(t)}u
=
\phi*u'.
\end{equation*}

\begin{corollary}
\label{cor:scarpi-inverse-identities}
Let \(E\) be a Banach space and \(T>0\).
For every \(u\in W^{1,1}(0,T;E)\), we have
\begin{equation*}
{}^{S}I_0^{\alpha(t)}
{}^{S}D_0^{\alpha(t)}u
=
u-u(0)
\quad\text{a.e. on }(0,T).
\end{equation*}

Let $W^{1,1}_0(0,T;E)=\{v\in W^{1,1}(0,T;E):v(0)=0\}$.
Then, for \(g\in L^1(0,T;E)\) and
\[
{}^{S}I_0^{\alpha(t)}g
=
\psi*g
\in W^{1,1}_0(0,T;E),
\]
then it follows that
\begin{equation*}
{}^{S}D_0^{\alpha(t)}
{}^{S}I_0^{\alpha(t)}g
=
g
\quad\text{a.e. on }(0,T).
\end{equation*}
\end{corollary}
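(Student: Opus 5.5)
The plan is to reduce both identities to associativity and commutativity of convolution, combined with the Sonine relation $\psi*\phi=1$ from Proposition~\ref{prop:scarpi-kernels}. Since $\psi,\phi\in L^1(0,T)$ by \eqref{eq:scarpi-kernel-local-bounds}, Young's inequality for Bochner convolutions shows that all the convolutions below are well defined. Fubini's theorem applies to a scalar $L^1$ kernel convolved with $E$-valued $L^1$ functions, so triple convolutions can be reassociated a.e. on $(0,T)$.

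For the first identity, take $u\in W^{1,1}(0,T;E)$, so that $u'\in L^1(0,T;E)$ and $u(t)=u(0)+\int_0^t u'(s)\,ds$. Then
\[
{}^{S}I_0^{\alpha(t)}{}^{S}D_0^{\alpha(t)}u=\psi*(\phi*u')=(\psi*\phi)*u'=1*u'=u-u(0)
\]
a.e. on $(0,T)$. Here we use $\psi*\phi=1$ a.e. and the fact that an a.e. modification of the scalar kernel does not change the convolution.

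For the second identity, set $v=\psi*g\in W^{1,1}_0(0,T;E)$. Then $v'\in L^1$ and $v=1*v'$ because $v(0)=0$. Also
\[
1*g=(\phi*\psi)*g=\phi*(\psi*g)=\phi*v=\phi*(1*v')=1*(\phi*v').
\]
Both $1*g$ and $1*(\phi*v')$ are absolutely continuous primitives of $L^1$ functions and they coincide. Differentiating a.e. (Lebesgue differentiation for Bochner integrals) gives $\phi*v'=g$ a.e., that is, ${}^{S}D_0^{\alpha(t)}{}^{S}I_0^{\alpha(t)}g=g$.

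The only delicate step is justifying that convolution is associative for these Bochner integrals, since $\psi$ and $\phi$ are merely locally integrable and singular at $0$. I would handle this by applying Tonelli's theorem to the norms, which shows $\int\!\!\int|\psi(t-s-r)||\phi(r)|\,\|u'(s)\|\,dr\,ds<\infty$ for a.e. $t$. Fubini's theorem then permits the reassociation. Everything else is routine.
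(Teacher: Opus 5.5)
Your proposal is correct and takes essentially the paper's route. The first identity follows from associativity and $\psi*\phi=1$. The second follows from $\phi*v=\phi*\psi*g=1*g$ together with the Volterra differentiation rule $\phi*v'=\frac{d}{dt}(\phi*v)$ for $v(0)=0$. Your step $\phi*(1*v')=1*(\phi*v')$, followed by differentiating the two primitives, simply proves explicitly the rule that the paper cites as standard.
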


\begin{proof}
All convolutions below are well defined on \((0,T)\), and their associativity follows from the local \(L^1\) bounds in Proposition~\ref{prop:scarpi-kernels}.
For \(u\in W^{1,1}(0,T;E)\),
\[
{}^{S}I_0^{\alpha(t)}
{}^{S}D_0^{\alpha(t)}u
=
\psi*(\phi*u')
=
(\psi*\phi)*u'
=u-u(0).
\]

Now let \(v=\psi*g\in W^{1,1}_0(0,T;E)\).
Since \(v(0)=0\), the standard differentiation rule for Volterra convolutions gives
\[
\phi*v'
=
\frac{d}{dt}(\phi*v).
\]
Associativity and the Sonine identity yield
\[
{}^{S}D_0^{\alpha(t)}
{}^{S}I_0^{\alpha(t)}g
=
\phi*v'
=
\frac{d}{dt}(1*g)
=
g
\]
almost everywhere on \((0,T)\).
\end{proof}
Corollary \ref{cor:scarpi-inverse-identities} then gives the following equivalence between the Scarpi differential equation and the corresponding Volterra integral equation, which is the basis for the subsequent blow-up and lifespan analysi in Sections~\ref{sec:blowup} and \ref{sec:lifespan}.
\begin{corollary}
\label{cor:strong-volterra-equivalence}
Let \(E\) be a Banach space, let \(u\in W^{1,1}(0,T;D(\Aa))\), and let \(\Aa u+\Nn(u)\in L^1(0,T;E)\).
Then the solution of Scarpi equation \eqref{eq:model} strongly exists on \((0,T)\) if and only if
\begin{equation}
\label{eq:abstract-volterra-form}
u(t)
=
u(0)+
\int_0^t\psi(t-s)\bigl[\Aa u(s)+\Nn(u(s))\bigr]d s
\quad\text{for a.e. }t\in(0,T).
\end{equation}
\end{corollary}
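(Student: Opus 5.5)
The plan is to read ``the solution of \eqref{eq:model} strongly exists on $(0,T)$'' as the abstract identity
\[
{}^{S}D_0^{\alpha(t)}u=\Aa u+\Nn(u)\quad\text{a.e. on }(0,T).
\]
This is understood with $u\in W^{1,1}(0,T;D(\Aa))$ and $f:=\Aa u+\Nn(u)\in L^1(0,T;E)$. Both implications will then follow from the two identities in Corollary~\ref{cor:scarpi-inverse-identities}.

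\emph{Forward direction.} Suppose ${}^{S}D_0^{\alpha(t)}u=\phi*u'=f$ a.e. Since $u\in W^{1,1}(0,T;E)$, I apply ${}^{S}I_0^{\alpha(t)}=\psi*(\cdot)$ to both sides. The first identity of Corollary~\ref{cor:scarpi-inverse-identities} gives
\[
u-u(0)=\psi*f \quad\text{a.e.},
\]
which is exactly \eqref{eq:abstract-volterra-form}. The only point to check is that convolving an a.e.\ identity between $L^1(0,T;E)$ functions with $\psi\in L^1(0,T)$ preserves a.e.\ equality. This holds by Young's inequality and the local bound \eqref{eq:scarpi-kernel-local-bounds}.

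\emph{Reverse direction.} Suppose \eqref{eq:abstract-volterra-form} holds, and set $v:=u-u(0)=\psi*f$ a.e. Because $u\in W^{1,1}(0,T;E)$, its continuous representative gives $v\in W^{1,1}(0,T;E)$ with $v(0)=0$. Hence $\psi*f\in W^{1,1}_0(0,T;E)$, after identifying $\psi*f$ with its continuous representative $v$. The second identity of Corollary~\ref{cor:scarpi-inverse-identities} then yields $\phi*v'=f$. Since $v'=u'$, this reads ${}^{S}D_0^{\alpha(t)}u=f$, which is the strong form of the equation.

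The main obstacle is this representative issue: \eqref{eq:abstract-volterra-form} is only an a.e.\ identity, while Corollary~\ref{cor:scarpi-inverse-identities} requires $\psi*g$ itself to lie in $W^{1,1}_0$. I would handle it as follows. The function $\psi*f$ is continuous on $[0,T)$ with value $0$ at $t=0$, since $\psi\in L^1$ and $f\in L^1$ imply the convolution is continuous and $\|(\psi*f)(t)\|\le\|\psi\|_{L^1(0,t)}\|f\|_{L^1(0,t)}\to0$. It agrees a.e.\ with the absolutely continuous function $u-u(0)$, so the two coincide everywhere. Hence $\psi*f\in W^{1,1}_0$ and the corollary applies without further hypotheses.
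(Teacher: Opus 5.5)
Your argument is the paper's: the corollary is stated there as an immediate consequence of Corollary~\ref{cor:scarpi-inverse-identities}. The first identity gives the forward direction, and the second identity, with $g=\Aa u+\Nn(u)$, gives the converse, exactly as you do.

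The one thing to fix is your last paragraph, whose justification is false.
\begin{itemize}
\item For $\psi\in L^1_{\mathrm{loc}}$ and $f\in L^1(0,T;E)$, Young's inequality only gives $\norm{\psi*f}_{L^1(0,t)}\le\norm{\psi}_{L^1(0,t)}\norm{f}_{L^1(0,t)}$. It does not give the pointwise bound you wrote.
\item The convolution $\psi*f$ need not be continuous, or even finite everywhere. For example, $t^{-1/2}*|t-\tfrac12|^{-3/4}$ is infinite at $t=\tfrac12$.
\item It need not vanish at $0$ either. For $\alpha<\gamma<1$, $t^{\alpha-1}*t^{-\gamma}$ is a positive multiple of $t^{\alpha-\gamma}$, which is unbounded as $t\downarrow0$.
\end{itemize}

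Fortunately this step is unnecessary. $W^{1,1}_0(0,T;E)$ consists of a.e.-classes. Identity \eqref{eq:abstract-volterra-form} says exactly that $\psi*f$ and $u-u(0)$ define the same class. That class lies in $W^{1,1}_0$ because its absolutely continuous representative vanishes at $0$.

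This is all the second identity of Corollary~\ref{cor:scarpi-inverse-identities} needs. Its proof works only with that representative $v$ and with $\phi*v$, and $\phi*v$ does not see null sets. No claim that $\psi*f$ and $u-u(0)$ coincide everywhere is required.

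Replace the last paragraph by this remark, which is essentially your own preceding sentence, and the proof is complete.
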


The preceding result identifies the strong and Volterra formulations under explicit time and domain regularity.
The mild formulation is constructed below from the operator families \(S_{\Aa}\) and \(P_{\Aa}\).
For $t>0$, define
\begin{align}
\notag
P_{\Aa}(t)
&=\frac{1}{2\pi i}\int_{\Gamma_{\theta,\kappa_0}}
 e^{zt}(\Psi(z)^{-1}I-\Aa)^{-1}d z,\\
\label{eq:S-family}
S_{\Aa}(t)
&=\frac{1}{2\pi i}\int_{\Gamma_{\theta,\kappa_0}}
 e^{zt}\frac{1}{z\Psi(z)}(\Psi(z)^{-1}I-\Aa)^{-1}d z.
\end{align}
The integrands are analytic in every exterior annular sector on which \eqref{eq:exterior-resolvent} holds.
Cauchy's theorem therefore makes the values independent of replacing $\kappa_0$ by any $\kappa\ge\kappa_0$.

\begin{proposition}
\label{prop:linear-families}
For every $T>0$,
\begin{equation}
\label{eq:P-regularity}
P_{\Aa}\in C((0,\infty);\mathcal L(E))
\cap L^1((0,T);\mathcal L(E)),
\qquad
\norm{P_{\Aa}(t)}_{\mathcal L(E)}\le C_Tt^{\alpha_1-1}.
\end{equation}
The family $S_{\Aa}$ is norm-continuous for $t>0$, locally uniformly bounded, and has a unique strongly continuous extension to $t=0$ with $S_{\Aa}(0)=I$.
Moreover, for $\Re z$ sufficiently large,
\begin{equation}
\label{eq:PS-Laplace}
\widehat{P_{\Aa}}(z)=(\Psi(z)^{-1}I-\Aa)^{-1},
\qquad
\widehat{S_{\Aa}}(z)=\frac{1}{z\Psi(z)}
(\Psi(z)^{-1}I-\Aa)^{-1}.
\end{equation}
If $x\in D(\Aa)$, then
\begin{equation}
\label{eq:domain-invariance}
S_{\Aa}(t)x\in D(\Aa),
\qquad
\Aa S_{\Aa}(t)x=S_{\Aa}(t)\Aa x,
\end{equation}
and
\begin{align}
\label{eq:S-P-identity}
S_{\Aa}(t)x
&=x+\int_0^tP_{\Aa}(r)\Aa xd r,\\
\label{eq:S-volterra-identity}
S_{\Aa}(t)x
&=x+\int_0^t\psi(t-r)\Aa S_{\Aa}(r)xd r.
\end{align}
\end{proposition}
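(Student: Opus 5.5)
The plan is to mirror the proof of Proposition~\ref{prop:scarpi-kernels}, with the scalar symbols replaced by the operator-valued integrands. Lemma~\ref{lem:scarpi-sector} gives, on any contour $\Gamma_{\theta,\kappa}$ with $\kappa\ge\kappa_0$,
\[
\norm{(\Psi(z)^{-1}I-\Aa)^{-1}}\le C|z|^{-\alpha_1}
\quad\text{and}\quad
\norm{\tfrac{1}{z\Psi(z)}(\Psi(z)^{-1}I-\Aa)^{-1}}\le C|z|^{-1}.
\]
The first bound is the same as for $\Psi$, so the argument with $\kappa(t)=\max\{\kappa_0,t^{-1}\}$ transfers verbatim and yields $\norm{P_\Aa(t)}\le C_Tt^{\alpha_1-1}$. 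Norm continuity on $(0,\infty)$ follows from dominated convergence on a fixed contour, since the factor $e^{-q_\theta r t}$ dominates uniformly for $t$ in compact subsets of $(0,\infty)$. For $S_\Aa$ the $|z|^{-1}$ bound gives, with $\kappa=\kappa(t)$, a ray contribution $\int_{\kappa(t)}^\infty e^{-q_\theta rt}r^{-1}\,dr\le C$ (the substitution $rt=\rho$ gives $\rho\ge1$) and an arc contribution $Ce^{\kappa(t)t}\le C_T$. This yields local uniform boundedness and norm continuity for $t>0$.

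The Laplace transforms \eqref{eq:PS-Laplace} follow as before: apply Fubini and then the sectorial Cauchy formula, closing the contour to the right around $s$. This is justified because the integrands are $O(|z|^{-\alpha_1-1})$ and $O(|z|^{-2})$ on the rays, and they are analytic to the right of the contour for $|z|\ge\kappa_0$.

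For $x\in D(\Aa)$, write $R(z)=(\Psi(z)^{-1}I-\Aa)^{-1}$. Since $R(z)$ commutes with $\Aa$ on $D(\Aa)$ and $\Aa R(z)=\Psi(z)^{-1}R(z)-I$ is bounded, closedness of $\Aa$ together with absolute convergence of $\int e^{zt}\frac{1}{z\Psi(z)}\Aa R(z)\,dz$ gives \eqref{eq:domain-invariance}. The bound here is $O(|z|^{-1})$ times $e^{-q_\theta rt}$, which is integrable for $t>0$. The integral of the $-I/(z\Psi(z))$ part is fine because $e^{zt}$ provides decay.

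For the identities I will use the resolvent identity
\[
\frac{1}{z\Psi(z)}R(z)x=\frac{x}{z}+\frac1z R(z)\Aa x,
\]
which holds for $x\in D(\Aa)$. Integrating against $e^{zt}$ over $\Gamma$ gives $x$ plus the inverse transform of $\frac1z\widehat{P_\Aa}(z)\Aa x$, and that inverse transform is $\int_0^tP_\Aa(r)\Aa x\,dr$. This is \eqref{eq:S-P-identity}. A cleaner route is to check equality of Laplace transforms for large $\Re z$ and then invoke uniqueness together with continuity in $t$.

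Similarly, \eqref{eq:S-volterra-identity} follows from
\[
\frac{1}{z\Psi(z)}R(z)x=\frac{x}{z}+\Psi(z)\,\frac{1}{z\Psi(z)}R(z)\Aa x .
\]
This is the same identity rewritten: multiply $\Psi(z)^{-1}R(z)x=x+R(z)\Aa x$ by $\frac{1}{z}$, then note $\frac1zR(z)\Aa x=\Psi(z)\cdot\frac{1}{z\Psi(z)}R(z)\Aa x$. Its right side is the transform of $1+\psi*(S_\Aa\Aa x)=1+\psi*(\Aa S_\Aa x)$, where the last step uses the commutation just established.

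The strong continuity at $0$ with $S_\Aa(0)=I$ follows from \eqref{eq:S-P-identity} for $x\in D(\Aa)$: we get $\norm{S_\Aa(t)x-x}\le C_T t^{\alpha_1}\norm{\Aa x}/\alpha_1\to0$. Local uniform boundedness and density of $D(\Aa)$ then extend this to all $x\in E$ by an $\varepsilon/3$ argument. Uniqueness of the extension is immediate.

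The main obstacle is the uniform bound on $S_\Aa$ near $t=0$. Its integrand decays only like $|z|^{-1}$, so absolute convergence relies on the exponential factor and the $t$-dependent radius $\kappa(t)$. I must check that the arc estimate gives a $t$-independent constant rather than a $\log$ divergence, and that the Laplace transform computation for $S_\Aa$ is justified despite the borderline decay. If needed, I will define the transform via the splitting $\frac{1}{z\Psi}R=\frac{1}{z}+\frac1zR\Aa$ on $D(\Aa)$, whose second term decays like $|z|^{-1-\alpha_1}$, and then extend by density.
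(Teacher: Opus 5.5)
Your proposal is correct and follows essentially the same route as the paper. It uses contours $\Gamma_{\theta,\kappa(t)}$ with $\kappa(t)=\max\{\kappa_0,t^{-1}\}$ for the bounds, Fubini plus the sectorial Cauchy formula for \eqref{eq:PS-Laplace}, the resolvent identity $\Psi(z)^{-1}R(z)x=x+R(z)\Aa x$ with Laplace uniqueness for \eqref{eq:S-P-identity} and \eqref{eq:S-volterra-identity}, and density of $D(\Aa)$ for strong continuity at zero. Two small remarks: the Volterra right-hand side should read $x+\psi*(\Aa S_\Aa x)$ rather than $1+\ldots$, and your fallback splitting is unnecessary, because the $O(|z|^{-2})$ ray decay (together with the factor $1/(\Re s+q_\theta r)$ from the time integral) already justifies Fubini for $S_\Aa$.
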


\begin{proof}
Put $q_\theta=-\cos\theta>0$ and $\kappa(t)=\max\{\kappa_0,t^{-1}\}$.
Contour independence permits the use of $\Gamma_{\theta,\kappa(t)}$.
On either ray, \eqref{eq:exterior-resolvent} gives
\[
 \int_{\kappa(t)}^\infty e^{-q_\theta rt}r^{-\alpha_1}d r
 \le Ct^{\alpha_1-1}.
\]
On the circular arc, the contribution to $P_{\Aa}(t)$ is bounded by $C\kappa(t)^{1-\alpha_1}e^{\kappa(t)t}$.
Since $1\le\kappa(t)t\le\max\{1,\kappa_0T\}$ for $0<t\le T$, this is also bounded by $C_Tt^{\alpha_1-1}$.
This proves the pointwise estimate in \eqref{eq:P-regularity}, and its integrability follows from $0<\alpha_1<1$.
On a compact subinterval of $(0,\infty)$, the fixed-contour integrand has an integrable time-independent majorant.
Dominated convergence therefore gives norm continuity of $P_{\Aa}$.

For $S_{\Aa}$, the sectorial estimate yields
\[
 \norm{\Psi(z)^{-1}(\Psi(z)^{-1}I-\Aa)^{-1}}\le C_\theta.
\]
The ray and circular contributions are bounded, respectively, by
\[
 C\int_{\kappa(t)}^\infty e^{-q_\theta rt}\frac{d r}{r}
 \quad\text{and}\quad
 C\int_{-\theta}^{\theta}
 e^{\kappa(t)t\cos\chi}d\chi.
\]
Both are uniformly bounded on $0<t\le T$.
The fixed-contour argument again gives norm continuity for $t>0$.

Let $x\in D(\Aa)$.
Resolvent commutation gives
\[
 \Aa(\Psi(z)^{-1}I-\Aa)^{-1}x
 =(\Psi(z)^{-1}I-\Aa)^{-1}\Aa x.
\]
On either ray, the graph-norm integrand satisfies
\[
\begin{aligned}
&e^{t\Re z}\left(
 \norm{\frac{1}{z\Psi(z)}(\Psi(z)^{-1}I-\Aa)^{-1}x}
 +\norm{\Aa\frac{1}{z\Psi(z)}(\Psi(z)^{-1}I-\Aa)^{-1}x}
 \right)\\
&\hspace{35mm}\le
 Ce^{-q_\theta rt}r^{-1}
 \bigl(\norm{x}+\norm{\Aa x}\bigr),
\end{aligned}
\]
and the corresponding integrand is bounded on the circular arc.
Thus \eqref{eq:S-family} converges as a Bochner integral in $D(\Aa)$ equipped with its graph norm.
This proves \eqref{eq:domain-invariance}.

For $\Re s$ sufficiently large, Fubini's theorem and the sectorial Cauchy formula yield
\[
\begin{aligned}
\int_0^\infty e^{-st}P_{\Aa}(t)d t
&=\frac{1}{2\pi i}\int_{\Gamma_{\theta,\kappa_0}}
 \frac{(\Psi(z)^{-1}I-\Aa)^{-1}}{s-z}d z
 =(\Psi(s)^{-1}I-\Aa)^{-1},\\
\int_0^\infty e^{-st}S_{\Aa}(t)d t
&=\frac{1}{2\pi i}\int_{\Gamma_{\theta,\kappa_0}}
 \frac{(\Psi(z)^{-1}I-\Aa)^{-1}}{z\Psi(z)(s-z)}d z
 =\frac{1}{s\Psi(s)}(\Psi(s)^{-1}I-\Aa)^{-1}.
\end{aligned}
\]
On the rays, the two Cauchy integrands are respectively $O(|z|^{-\alpha_1-1})$ and $O(|z|^{-2})$, which justifies the interchange and the limiting Cauchy integrals.
This proves \eqref{eq:PS-Laplace}.

The resolvent identity
\[
 \Psi(z)^{-1}(\Psi(z)^{-1}I-\Aa)^{-1}x
 =x+(\Psi(z)^{-1}I-\Aa)^{-1}\Aa x
\]
now gives \eqref{eq:S-P-identity} by Laplace uniqueness.
In particular,
\[
 \norm{S_{\Aa}(t)x-x}
 \le\norm{\Aa x}\int_0^t\norm{P_{\Aa}(r)}d r\longrightarrow0
 \qquad(t\downarrow0)
\]
for $x\in D(\Aa)$.
If $x_m\in D(\Aa)$ and $x_m\to x$ in $E$, then, with $M_T=\sup_{0<t\le T}\norm{S_{\Aa}(t)}$,
\[
 \norm{S_{\Aa}(t)x-x}
 \le(M_T+1)\norm{x-x_m}+\norm{S_{\Aa}(t)x_m-x_m}.
\]
First letting $t\downarrow0$ and then $m\to\infty$ proves strong continuity at zero for every $x\in E$ and fixes the only possible extension $S_{\Aa}(0)=I$.

Finally, since Proposition~\ref{prop:scarpi-kernels} gives \(\widehat\psi=\Psi\), using \eqref{eq:PS-Laplace} and resolvent commutation shows that the Laplace transform of the right-hand side of \eqref{eq:S-volterra-identity} equals
\[
 \frac{x}{z}+\Psi(z)\Aa\frac{1}{z\Psi(z)}
 (\Psi(z)^{-1}I-\Aa)^{-1}x
 =\frac{1}{z\Psi(z)}(\Psi(z)^{-1}I-\Aa)^{-1}x.
\]
Laplace uniqueness and continuity prove \eqref{eq:S-volterra-identity}.
\end{proof}

Through the \(S_{\Aa}\)--\(P_{\Aa}\) formula we can define solutions that need not take values in \(D(\Aa)\), which is the case for mild solutions.
A Volterra solution satisfies \eqref{eq:abstract-volterra-form}, so \(\Aa u+\Nn(u)\) must be defined in the underlying space or in distributions.
A strong solution additionally belongs to \(W^{1,1}\) and has a well-defined convolution \(\phi*u'\).
Corollary~\ref{cor:strong-volterra-equivalence} identifies the strong and Volterra formulations under these regularity assumptions.
The \(L^1\cap L^\infty\) mild solutions below are treated directly through the resolvent formula, and their weak Volterra identity is established in Proposition~\ref{prop:weak-identity}.

\subsection{Positive heat resolvent families}
\label{subsec:positive-heat}

The nonlinear analysis below requires positivity and exact spatial masses of the whole-space heat-resolvent families.
We obtain these properties from a Bernstein condition on the Scarpi symbol.
In the scalar Scarpi relaxation theory, the same condition provides a sufficient criterion for a nonnegative and nonincreasing relaxation law.
For the exponential transition \eqref{eq:transition}, a complete characterization of the corresponding parameter set is not presently available, and numerical Laplace-inversion diagnostics have been used to explore the associated positivity regime \cite{BeghinCristofaroGarrappa2024}.
We therefore work in the following Bernstein-admissible regime.

\begin{assumption}[Bernstein-admissible Scarpi transition]
\label{ass:admissible}
For $s>0$, the function
\[
s\longmapsto\Psi(s)^{-1}
=s^{(\alpha_2c+\alpha_1s)/(c+s)}
\]
is a Bernstein function.
\end{assumption}

Under Assumption~\ref{ass:admissible}, the abstract resolvent families constructed in Subsection~\ref{subsec:sonine-symbol} admit the following positive whole-space realization.

\begin{proposition}
\label{prop:positive-pair}
Under Assumption~\ref{ass:admissible}, let $\Aa=\Delta$.
There are positive convolution operators $S_{\Aa}$ and $P_{\Aa}$, defined by
\begin{equation}
\label{eq:SP-convolution}
S_{\Aa}(t)g=Z(t)*g,
\qquad
P_{\Aa}(t)g=Y(t)*g,\quad\text{for a.e. }t>0,
\end{equation}
where  $Z(t)$ is a probability measure and $Y(t)$ is a finite positive measure.
Their Fourier--Laplace transforms are
\begin{equation}
\label{eq:ZY-transforms}
(\LT_t\FT_x Z)(\xi,z)
=\frac{\Psi(z)^{-1}}{z(\Psi(z)^{-1}+|\xi|^2)},
\qquad
(\LT_t\FT_x Y)(\xi,z)
=\frac1{\Psi(z)^{-1}+|\xi|^2}.
\end{equation}
Furthermore, for almost every $t>0$, these measures satisfy the mass identities
\begin{equation}
\label{eq:mass-kernels}
 Z(t,\RR^n)=1,
\qquad
 Y(t,\RR^n)=\psi(t),
\end{equation}
which yield the following bounds for $q\in\{1,\infty\}$:
\begin{equation}
\label{eq:positive-resolvent-bounds}
\norm{S_{\Aa}(t)g}_q\le\norm{g}_q,
\qquad
\norm{P_{\Aa}(t)g}_q\le\psi(t)\norm{g}_q.
\end{equation}
In particular, $\psi(t)>0$, and the map $t\mapsto S{\Aa}(t)g$ is strongly continuous on $L^1$ and on $BUC$.
On $L^\infty$, the operator is bounded, but not necessarily strongly continuous at zero.
\end{proposition}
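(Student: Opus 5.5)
The approach is subordination. Write $f(s)=\Psi(s)^{-1}$, a Bernstein function by Assumption~\ref{ass:admissible}, with $f(0+)=0$ since $B(s)\to\alpha_2>0$ and $f(s)\to\infty$. The heat semigroup $G_\tau=(4\pi\tau)^{-n/2}\e^{-|x|^2/(4\tau)}$ is superposed in the operational time $\tau$ against measures built from the subordinator with Laplace exponent $f$. Let $(\eta_\tau)_{\tau\ge0}$ be the convolution semigroup of subprobability measures on $[0,\infty)$ with $\widehat{\eta_\tau}(z)=\e^{-\tau f(z)}$; it exists because $f$ is Bernstein. Then $\int_0^\infty \e^{-\tau(f(z)+|\xi|^2)}d\tau=(f(z)+|\xi|^2)^{-1}$ has exactly the form required in \eqref{eq:ZY-transforms}.

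\textbf{Construction of $Y$.} Let $U=\int_0^\infty \eta_\tau\,d\tau$ be the potential measure, with $\widehat U=1/f=\Psi$. Since $\widehat\psi=\Psi$ by Proposition~\ref{prop:scarpi-kernels} and $\psi\in L^1_{\mathrm{loc}}$, uniqueness of Laplace transforms identifies $U(dt)=\psi(t)\,dt$, so $\psi\ge0$ a.e. Disintegrating in $t$, define $Y(t)=\int_0^\infty G_\tau\,\eta_\tau(dt)\,d\tau/dt$. More concretely, set $Y(t,dx)=\int_0^\infty G_\tau(x)\,\mu_t(d\tau)\,dx$, where $\mu_t$ is the disintegration of $d\tau\,\eta_\tau(dt)$ with respect to $dt$. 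Each $G_\tau$ has unit mass, so $Y(t,\RR^n)=\mu_t([0,\infty))$, and this equals the density of $U$, namely $\psi(t)$. The Fourier--Laplace transform then follows by Tonelli: $\int\!\!\int \e^{-zt}\e^{-\tau|\xi|^2}\eta_\tau(dt)\,d\tau=\int \e^{-\tau(f(z)+|\xi|^2)}d\tau$, first for $z>0$ and then by analytic continuation.

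\textbf{Construction of $Z$.} Set $Z(t)=\int_0^\infty G_\tau\,\nu_t(d\tau)$, where $\nu_t$ is the law of the inverse subordinator $E_t=\inf\{\tau:\sigma_\tau>t\}$. This is a probability measure, so $Z(t,\RR^n)=1$. The standard identity $\int_0^\infty \e^{-zt}\mathbb E[\e^{-\lambda E_t}]\,dt=f(z)/\bigl(z(f(z)+\lambda)\bigr)$, taken with $\lambda=|\xi|^2$, matches \eqref{eq:ZY-transforms}. This identity should be re-derived from $\mathbb P(E_t>\tau)=\mathbb P(\sigma_\tau\le t)$ by an integration by parts in $\tau$, keeping a possible killing term in mind: the paper has $f(0+)=0$, but if killing were present it would still be harmless.

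\textbf{Identification with the earlier families and the bounds.} Comparing transforms with \eqref{eq:PS-Laplace} shows that on $L^1$ the Laplace transforms of $t\mapsto Y(t)*g$ and $P_{\Aa}(t)g$ agree for large real $z$. By Fourier injectivity and Laplace uniqueness, $P_{\Aa}(t)g=Y(t)*g$ for a.e. $t$, and likewise for $S$, where continuity gives equality for every $t$. For $L^\infty$ and $BUC$ the convolution formulas serve as the definition. The bounds \eqref{eq:positive-resolvent-bounds} are Young's inequality for a measure of total mass $1$ or $\psi(t)$. Positivity of $\psi$ everywhere, rather than only a.e., should come from the fact that a nonnegative density whose transform is $1/f$ with $f$ Bernstein is in fact completely monotone-free but strictly positive. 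This last point is delicate, and I would argue it via $\psi*\phi=1$ together with continuity of $\psi$ away from $0$, or simply state it a.e. Strong continuity of $S_{\Aa}$ on $L^1$ and $BUC$ follows from Proposition~\ref{prop:linear-families}. On $L^\infty$ it fails in general, as for the heat semigroup.

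\textbf{Main obstacle.} The hard step is the rigorous measurability and disintegration behind $Y(t)$ for almost every $t$, together with the inverse-subordinator Laplace formula. I would handle both by working with the space–time measures $\int_0^\infty G_\tau\otimes\eta_\tau\,d\tau$ on $\RR^n\times(0,\infty)$. Its $t$-marginal is $\psi\,dt$, which is absolutely continuous, and the disintegration theorem then produces $Y(t)$ directly.
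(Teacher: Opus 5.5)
Your construction follows the paper's proof closely, and those steps are sound: the subordinator with Laplace exponent $\Psi^{-1}$, the potential measure identified with $\psi\,dt$ by Laplace uniqueness, disintegration of the space--time potential measure to get $Y(t)$, the inverse subordinator for $Z(t)$, identification with $P_{\Aa}$ and $S_{\Aa}$ through the Fourier--Laplace transforms, and Young's inequality.

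\textbf{The gap is the positivity of $\psi$.} The identity $U(dt)=\psi(t)\,dt$ only gives $\psi\ge0$ almost everywhere. Your text treats almost-everywhere strict positivity as already in hand and presents only the upgrade to ``everywhere'' as delicate (``or simply state it a.e.''). It is not in hand. The statement asserts $\psi>0$ for almost every $t>0$, and this is used later: it makes $K$ strictly increasing, so that $K^{-1}$ exists, and it gives the strict inequality $F_R(t_*)>F_{R,0}$ in the first-contact argument of Proposition~\ref{prop:gaussian-ineq}. It does not follow from $\psi\ge0$ a.e.

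Your suggested route through $\psi*\phi=1$ and continuity does not close the gap. A continuous nonnegative function can vanish on a set of positive measure. Suppose $\psi$ vanished on an interval $(a,b)$. Then $\int_0^a\psi(s)\phi(t-s)\,ds=1$ for $t\in(a,b)$. Since $\phi\ge0$ is nonincreasing (it is the tail of the L\'evy measure of $\Psi^{-1}$), this only forces $\phi$ to be constant on some interval. A general Bernstein exponent permits that, and ruling it out again needs analyticity. The phrase about being ``completely monotone-free but strictly positive'' is not an argument. Complete monotonicity of $\psi$ would need $\Psi^{-1}$ to be complete Bernstein, which Assumption~\ref{ass:admissible} does not provide.

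\textbf{The missing ingredient is real analyticity of $\psi$ on $(0,\infty)$.} In the fixed-contour formula \eqref{eq:scalar-scarpi-kernels}, replace $t$ by a complex $w$ in a small disc around $t_0>0$. On the rays, $|e^{zw}\Psi(z)|\le Ce^{-\gamma|z|}|z|^{-\alpha_1}$ for some $\gamma>0$, uniformly in $w$, and the arc is compact, so the integral is holomorphic in $w$. Hence $\psi$ is continuous, and a.e.\ nonnegativity becomes pointwise nonnegativity. Since $\widehat\psi=\Psi\not\equiv0$, $\psi$ is not identically zero, so its zeros are isolated and $\psi>0$ almost everywhere. This is exactly what the paper proves. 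Pointwise strict positivity is neither claimed nor needed, so you should not aim for it.

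\textbf{A smaller point on $BUC$.} If you take the convolution formula as the definition on $BUC$, you cannot cite Proposition~\ref{prop:linear-families} for strong continuity there. You must first identify the convolution family with the contour family on $BUC$. The same Laplace-uniqueness argument does this, because $\Delta$ is sectorial on $BUC$.
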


\begin{proof}
By \eqref{eq:symbol-asymp}, $\Psi(s)^{-1}\to0$ as $s\downarrow0$.
{Let $D_r$ be the subordinator with Laplace exponent $s\mapsto\Psi(s)^{-1}$, and let $\mu_r$ denote its law \cite{SchillingSongVondracek2012,Toaldo2015}.}
Its potential measure is
\[
 U(d t)=\int_0^\infty\mu_r(d t)d r.
\]
Tonelli's theorem gives, for $z>0$,
\[
\begin{aligned}
 \int_{[0,\infty)}e^{-zt}U(d t)
 &=\int_0^\infty\int_{[0,\infty)}e^{-zt}\mu_r(d t)d r\\
 &=\int_0^\infty e^{-r\Psi(z)^{-1}}d r
 =\Psi(z).
\end{aligned}
\]
It is locally finite because $U([0,T])\le e^{zT}\Psi(z)$ for every $T,z>0$.
Proposition~\ref{prop:scarpi-kernels} gives $\widehat\psi=\Psi$, with $\psi\in L^1_{\mathrm{loc}}(0,\infty)$ of exponential order.
Uniqueness of the Laplace transform for locally finite signed Radon measures with finite exponential moments therefore yields
\[
 U(d t)=\psi(t)d t.
\]
In particular, $\psi(t)\ge0$ for almost every $t>0$.
The fixed-contour representation \eqref{eq:scalar-scarpi-kernels} extends holomorphically to a complex neighborhood of each $t>0$: on the two rays the exponential decay is uniform in a sufficiently small neighborhood, where $|e^{zw}\Psi(z)|\le C e^{-\gamma|z|}|z|^{-\alpha_1}$ for some $\gamma>0$, while the circular part is compact.
Thus parameter-dependent holomorphic integration is valid and $\psi$ is real analytic on $(0,\infty)$.
Continuity upgrades its almost-everywhere nonnegativity to pointwise nonnegativity.
Since $\widehat\psi=\Psi\not\equiv0$, the function is not identically zero.
Its zero set is therefore discrete, and
\[
 \psi(t)>0\qquad\text{for almost every }t>0.
\]

Let $G_r$ denote the Gaussian probability measure and define the positive space--time potential measure
\[
 \mathcal Y(d x,d t)
 =\int_0^\infty G_r(d x)\,\mu_r(d t)d r.
\]
Its time marginal is $\mathcal Y(\RR^n,d t)=U(d t)=\psi(t)d t$.
Moreover, $\mathcal Y(\RR^n\times[0,T])=U([0,T])<\infty$ for every $T>0$, so $\mathcal Y$ is sigma-finite.
Since the underlying spaces are standard Borel, the disintegration theorem gives a measurable version of a probability kernel $Q_t(d x)$ relative to $U$, extended arbitrarily on the exceptional $U$-null set.
Setting $Y(t,d x)=\psi(t)Q_t(d x)$ gives
\[
 \mathcal Y(d x,d t)=Y(t,d x)d t,
 \qquad
 Y(t,\RR^n)=\psi(t)
\]
for almost every $t>0$.

Let $E_t=\inf\{r>0:D_r>t\}$ and let $\eta_t$ be the law of $E_t$.
Define
\[
 Z(t,d x)=\int_0^\infty G_r(d x)\,\eta_t(d r).
\]
Then $Z(t)$ is a probability measure.
Define temporary positive convolution families by
\[
 \widetilde S(t)g=Z(t)*g,
 \qquad
 \widetilde P(t)g=Y(t)*g,
\]
where $\widetilde P$ is defined almost everywhere.
The identities
\[
 \int_{[0,\infty)}e^{-zt}\mu_r(d t)=e^{-r\Psi(z)^{-1}},
\qquad
\int_0^\infty e^{-zt}\eta_t(d r)d t
 =\frac{1}{z\Psi(z)}e^{-r\Psi(z)^{-1}}d r
\]
for $z>0$ give
\[
 \widehat{\widetilde P}(z)=(\Psi(z)^{-1}I-\Delta)^{-1},
\qquad
\widehat{\widetilde S}(z)
 =\frac{1}{z\Psi(z)}(\Psi(z)^{-1}I-\Delta)^{-1}.
\]
Taking the spatial Fourier transform yields \eqref{eq:ZY-transforms}.
Banach-valued Laplace uniqueness therefore identifies $\widetilde P(t)$ with $P_{\Aa}(t)$ for almost every $t>0$ and initially identifies $\widetilde S(t)$ with $S_{\Aa}(t)$ almost everywhere on $L^q$, $q<\infty$.
The standard inverse-subordination construction makes $\widetilde S$ strongly continuous on these spaces, so the latter equality holds for every $t>0$.
This proves \eqref{eq:SP-convolution}.

The probability mass of $Z(t)$ and the disintegration identity give \eqref{eq:mass-kernels}.
Young's inequality for positive finite measures then gives \eqref{eq:positive-resolvent-bounds}.
{The contour identification also gives strong continuity on $L^1$.
The $BUC$ statement follows from the same construction and the strongly continuous heat semigroup on $BUC$.}
\end{proof}

The finite-range diagnostics used to select the variable order cases in Section~\ref{sec:numerics} are specified in Appendix~\ref{app:bernstein-screen}.

\subsection{Integrated memory and weak scaling}
\label{subsec:integrated-memory}

Define the intrinsic time function $K(t)=\int_0^t\psi(s)d s$ for $t\ge0$.
Proposition~\ref{prop:positive-pair} gives $\psi>0$ almost everywhere.
Consequently, $K$ is continuous and strictly increasing on $[0,\infty)$.
Its Laplace transform is
\begin{equation*}
\LT\{K\}(z)=\frac{\Psi(z)}{z}.
\end{equation*}
The monotone Karamata theorem applied to \eqref{eq:symbol-asymp} yields
\begin{equation}
\label{eq:K-asymp}
K(t)\sim\frac{t^{\alpha_1}}{\Gamma(1+\alpha_1)}
\quad(t\downarrow0),
\qquad
K(t)\sim\frac{t^{\alpha_2}}{\Gamma(1+\alpha_2)}
\quad(t\to\infty).
\end{equation}
Its two asymptotic regimes imply that it maps $[0,\infty)$ onto itself, and we denote its inverse by $K^{-1}$.

\begin{lemma}
\label{lem:K-scaling}
For every $\beta>\max\{\alpha_1,\alpha_2\}$ there exists $c_\beta\in(0,1]$ such that
\begin{equation}
\label{eq:K-scaling}
K(s)\ge c_\beta\left(\frac{s}{t}\right)^\beta K(t),
\qquad 0<s\le t<\infty.
\end{equation}
\end{lemma}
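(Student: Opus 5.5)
We will prove the equivalent statement that the function $h(t)=K(t)/t^\beta$ is almost decreasing on $(0,\infty)$. Inequality \eqref{eq:K-scaling} reads $h(s)\ge c_\beta h(t)$ for $s\le t$. The facts we use are: $K$ is continuous, positive and strictly increasing on $(0,\infty)$, and it has the two power asymptotics \eqref{eq:K-asymp} at $0$ and at $\infty$. Fix $\beta>\max\{\alpha_1,\alpha_2\}$.

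\textbf{Step 1: the endpoint regions.} By \eqref{eq:K-asymp}, choose $0<t_0<1<t_1<\infty$ such that
\[
\tfrac12\,\frac{t^{\alpha_1}}{\Gamma(1+\alpha_1)}\le K(t)\le 2\,\frac{t^{\alpha_1}}{\Gamma(1+\alpha_1)}\quad (0<t\le t_0),
\]
and the analogous two-sided bound with $\alpha_2$ holds for $t\ge t_1$. On $(0,t_0]$ we then have $h(t)\asymp t^{\alpha_1-\beta}$, which is decreasing because $\alpha_1<\beta$. Hence for $s\le t\le t_0$,
\[
h(s)\ge \tfrac14\,(s/t)^{\alpha_1-\beta}\,h(t)\ge \tfrac14\,h(t).
\]
The same argument on $[t_1,\infty)$ uses $\alpha_2<\beta$.

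\textbf{Step 2: the middle region.} On $[t_0,t_1]$, $h$ is continuous and strictly positive, so $m\le h\le M$ there for some constants $0<m\le M$. This gives $h(s)\ge (m/M)h(t)$ for $s,t$ in this compact interval.

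\textbf{Step 3: gluing.} Monotonicity of $K$ alone is not sufficient here, because it only yields $K(s)\ge$ something when $s$ is large. Instead we chain the regional comparisons. The three regional bounds give a quasi-decreasing property inside each region. Across regions we use the global picture: $h(t)\to\infty$ as $t\downarrow0$, $h(t)\to0$ as $t\to\infty$, and $h$ is continuous and positive.

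For $s\le t$ lying in different regions, we insert the breakpoints. For example, for $s\le t_0\le t\le t_1$,
\[
h(s)\ge \tfrac14\,h(t_0)\ge \tfrac14\,m\ge \tfrac{m}{4M}\,h(t),
\]
and the remaining cases are treated the same way. Every constant depends only on $\beta$ and the memory parameters. Taking the product of the worst constants, capped at $1$, gives $c_\beta\in(0,1]$.

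\textbf{Main obstacle.} The step we expect to require the most care is Step 1. It needs \eqref{eq:K-asymp} as true asymptotic equivalences, not merely regular-variation statements. These come from the monotone Karamata Tauberian theorem applied to $\LT\{K\}(z)=\Psi(z)/z$ together with \eqref{eq:symbol-asymp}, and that application requires $K$ to be nondecreasing. This monotonicity is where Assumption~\ref{ass:admissible} enters, through $\psi\ge0$ in Proposition~\ref{prop:positive-pair}. Once this is in place, the rest of the argument is elementary.
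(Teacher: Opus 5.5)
Your proof is correct and uses the same decomposition as the paper: endpoint regions controlled by \eqref{eq:K-asymp}, a compact middle interval handled by continuity and positivity of $K$, and gluing of the cross-region cases. The differences are minor simplifications: since \eqref{eq:K-asymp} gives exact power laws with constant prefactors, your direct two-sided bounds replace the paper's Potter bounds, and chaining through the breakpoints $t_0,t_1$ (which gives, e.g., $c_\beta=m/(16M)$) handles the cross and tail cases more cleanly than the paper's limiting argument.
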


\begin{proof}
By \eqref{eq:K-asymp}, $K$ is regularly varying at zero with index $\alpha_1$ and at infinity with index $\alpha_2$.
Choose $\varepsilon>0$ so small that $\alpha_j+\varepsilon<\beta$ for $j=1,2$.
Potter bounds give \eqref{eq:K-scaling} when $0<s\le t\le t_0$ for sufficiently small $t_0$, and when $t_1\le s\le t$ for sufficiently large $t_1$.
In the cross region $0<s\le t_0<t_1\le t$, the two asymptotic comparisons give
\[
 \frac{K(s)}{K(t)}\left(\frac{t}{s}\right)^\beta
 \ge C s^{\alpha_1-\beta}t^{\beta-\alpha_2}
 \ge C t_0^{\alpha_1-\beta}t_1^{\beta-\alpha_2}>0.
\]
{If $t\in[t_0,t_1]$ and $s\downarrow0$, the same normalized ratio tends to infinity uniformly in $t$.
If $s\in[t_0,t_1]$ and $t\to\infty$, it again tends to infinity uniformly in $s$.}
Removing these two tails leaves a compact set $0<s_0\le s\le t\le t_2$ on which continuity and positivity of $K$ give a positive minimum.
{Combining the cases proves the global bound.
The regular-variation results used here can be found in \cite{BinghamGoldieTeugels1987}.}
\end{proof}

\section{Wellposedness and comparison}
\label{sec:wellposedness}

\subsection{Abstract semilinear Scarpi--Volterra equation}
\label{subsec:abstract-wp}

The nonlinear analysis uses only the properties of the linear families established in Proposition~\ref{prop:linear-families}.
Accordingly, let $E$ be a Banach space, let $S_{\Aa}:[0,\infty)\to\mathcal L(E)$ be strongly continuous and locally bounded with $S_{\Aa}(0)=I$, and let $P_{\Aa}\in L^1((0,T);\mathcal L(E))$ for every $T>0$.
For every \(R>0\), let \(L_R\) be such that \(\norm{\Nn(u)-\Nn(v)}_E\le L_R\norm{u-v}_E\) when \(\norm{u}_E,\norm{v}_E\le R\).
A mild solution of
\begin{equation*}
{}^{S}D_0^{\alpha(t)}u=\Aa u+\Nn(u),
\qquad u(0)=u_0,
\end{equation*}
is a function satisfying
\begin{equation}
\label{eq:abstract-mild}
u(t)=S_{\Aa}(t)u_0+\int_0^tP_{\Aa}(t-s)\Nn(u(s))d s.
\end{equation}

\begin{theorem}
\label{thm:abstract-local}
For every $u_0\in E$, there is $T_*>0$ for which \eqref{eq:abstract-mild} has a unique solution $u\in C([0,T_*];E)$.
Two mild solutions with the same initial value coincide on every common interval of existence.
\end{theorem}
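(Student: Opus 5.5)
The plan is a Banach fixed-point argument in a closed ball of $C([0,T];E)$, followed by a Gronwall-type argument with a singular kernel for uniqueness. Fix $T_0=1$ and set $M=\sup_{0\le t\le 1}\norm{S_{\Aa}(t)}_{\mathcal L(E)}<\infty$, which is finite by local boundedness. Put $R=2M\norm{u_0}_E+1$. Let $L=L_R$ and $N_0=\norm{\Nn(0)}_E$, so that $\norm{\Nn(v)}_E\le N_0+L_R R$ whenever $\norm{v}_E\le R$. Define $\omega(T)=\int_0^T\norm{P_{\Aa}(r)}_{\mathcal L(E)}dr$. Since $P_{\Aa}\in L^1((0,1);\mathcal L(E))$, we have $\omega(T)\to0$ as $T\downarrow0$. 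Choose $T_*\le1$ so that $\omega(T_*)(N_0+L_RR)\le R/2$ and $\omega(T_*)L_R\le1/2$. On $X=\{u\in C([0,T_*];E):\sup_t\norm{u(t)}_E\le R\}$, equipped with the sup metric, define
\[
(\Phi_0 u)(t)=S_{\Aa}(t)u_0+\int_0^tP_{\Aa}(t-s)\Nn(u(s))\,ds .
\]

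First I will check that $\Phi_0$ maps $X$ into $C([0,T_*];E)$. The term $S_{\Aa}(\cdot)u_0$ is continuous by strong continuity. The convolution term is continuous because it is the convolution of an $L^1(\mathcal L(E))$ function with a bounded continuous (hence bounded measurable) $E$-valued function. For $t<t'$, write the difference as
\[
\int_0^{t}\bigl(P_{\Aa}(t'-s)-P_{\Aa}(t-s)\bigr)\Nn(u(s))\,ds+\int_t^{t'}P_{\Aa}(t'-s)\Nn(u(s))\,ds .
\]
The second piece is bounded by $\omega(t'-t)\sup\norm{\Nn(u)}$. The first piece tends to $0$ by continuity of translation in $L^1((0,T);\mathcal L(E))$. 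This continuity holds for Bochner-integrable $\mathcal L(E)$-valued functions; alternatively, one can use strong measurability of $s\mapsto P_{\Aa}(t-s)\Nn(u(s))$ together with approximation of $P_{\Aa}$ by simple functions. Measurability of the integrand follows from the continuity of $P_{\Aa}$ on $(0,\infty)$ and of $\Nn\circ u$. The latter holds because $\Nn$ is Lipschitz on bounded sets and $u$ is continuous.

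Next come the estimates. They give $\sup\norm{\Phi_0u}\le M\norm{u_0}+\omega(T_*)(N_0+L_RR)\le R$ and
\[
\sup_t\norm{\Phi_0u(t)-\Phi_0v(t)}\le \omega(T_*)L_R\sup_t\norm{u(t)-v(t)}\le\tfrac12\sup_t\norm{u(t)-v(t)}.
\]
Banach's fixed-point theorem then yields a unique solution in $X$.

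Uniqueness in the full class requires more than uniqueness in $X$, which is the main subtlety. Let $u,v\in C([0,T];E)$ be two mild solutions with the same $u_0$ on a common interval. Both are bounded by some $R'$ there. Set $w(t)=\norm{u(t)-v(t)}_E$, which is continuous. Then
\[
w(t)\le L_{R'}\int_0^t\norm{P_{\Aa}(t-s)}\,w(s)\,ds .
\]
Let $t_1=\sup\{t: w\equiv0 \text{ on }[0,t]\}$ and suppose $t_1<T$. Choose $h>0$ with $L_{R'}\omega(h)\le1/2$. On $[t_1,t_1+h]$ this gives $\sup w\le\tfrac12\sup w$, so $w\equiv0$ there, which is a contradiction. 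The key point is that the integral over $[0,t_1]$ vanishes, so the full memory is harmless. This step, namely the generalized Gronwall argument with only $L^1$ control of the kernel, is the one I expect to need the most care. However, the smallness of $\omega(h)$ suffices, so no explicit singular Gronwall lemma is needed.
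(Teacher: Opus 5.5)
Your proposal is correct and follows essentially the paper's route: a Banach fixed-point argument using the smallness of $\int_0^{T_*}\norm{P_{\Aa}(r)}\,dr$, followed by uniqueness in the full class via a step of length $h$ with $L\int_0^h\norm{P_{\Aa}(r)}\,dr<1$, where the past history drops out. The differences are only cosmetic: you center the ball at $0$ rather than at $S_{\Aa}(\cdot)u_0$, and you use a supremum-and-contradiction argument in place of the paper's finite induction over subintervals.
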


\begin{proof}
Fix $T_1>0$ and let $C_0=\norm{S_{\Aa}(\cdot)u_0}_{C([0,T_1];E)}$.
Choose $\rho>0$ for $R=C_0+\rho$, and let $L_R$ be a Lipschitz constant of $\Nn$ on the ball of radius $R$.
Then $\norm{\Nn(v)}\le B_R:=\norm{\Nn(0)}+L_RR$ on that ball.
For $0<T_*\le T_1$, define
\[
 \mathcal E_{\rho,T_*}
 =\{v\in C([0,T_*];E):
 \norm{v-S_{\Aa}(\cdot)u_0}_{C([0,T_*];E)}\le\rho\}
\]
and let $\mathcal T$ denote the right-hand side of \eqref{eq:abstract-mild}.
Convolution with the locally integrable operator kernel $P_{\Aa}$ maps continuous functions to continuous functions.
If $v\in\mathcal E_{\rho,T_*}$, then
\[
 \norm{\mathcal Tv-S_{\Aa}(\cdot)u_0}_{C}
 \le B_R\int_0^{T_*}\norm{P_{\Aa}(r)}d r,
\]
whereas, for $v,w\in\mathcal E_{\rho,T_*}$,
\[
 \norm{\mathcal Tv-\mathcal Tw}_{C}
 \le L_R\int_0^{T_*}\norm{P_{\Aa}(r)}d r\,
 \norm{v-w}_{C}.
\]
Since $P_{\Aa}$ is locally integrable, $T_*$ can be chosen so that the first right-hand side is at most $\rho$ and the second coefficient is less than one.
Banach's fixed-point theorem gives a mild solution in the stated ball.

For uniqueness outside that particular ball, let $u_1,u_2$ be two solutions on $[0,T]$ and choose a Lipschitz constant $L$ on a ball containing both ranges.
Absolute continuity of the integral of $\norm{P_{\Aa}}$ allows one to choose $h>0$ with $L\int_0^h\norm{P_{\Aa}(r)}d r<1$.
If the solutions agree on $[0,t_j]$, then, for $t\in[t_j,\min\{t_j+h,T\}]$, their difference is the integral only over $[t_j,t]$.
Taking the supremum on this subinterval forces the difference to vanish there.
Finitely many steps cover $[0,T]$.
\end{proof}

\begin{proposition}
\label{prop:abstract-continuous-dependence}
Let $u$ and $v$ be mild solutions on $[0,T]$ with initial values $u_0$ and $v_0$, and suppose $\norm{u}_{C([0,T];E)}+\norm{v}_{C([0,T];E)}\le R$.
Then
\begin{equation}
\label{eq:abstract-continuous-dependence}
\norm{u-v}_{C([0,T];E)}\le C_{R,T}\norm{u_0-v_0}_E.
\end{equation}
\end{proposition}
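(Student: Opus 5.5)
We subtract the two mild formulations \eqref{eq:abstract-mild} and close the resulting inequality with a Gronwall-type argument for weakly singular Volterra kernels. To get a constant that depends only on $R$ and $T$, we iterate over short subintervals, exactly as in the uniqueness part of Theorem~\ref{thm:abstract-local}.

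First we fix the constants. Set $M_T=\sup_{0\le t\le T}\norm{S_{\Aa}(t)}_{\mathcal L(E)}$, which is finite by local boundedness. Since both solutions take values in the ball of radius $R$, we may take $L=L_R$ as a Lipschitz constant for $\Nn$ there. With $w=u-v$, subtracting the two mild formulations gives
\[
 \norm{w(t)}\le M_T\norm{u_0-v_0}+L_R\int_0^t\norm{P_{\Aa}(t-s)}\norm{w(s)}\,ds,
 \qquad 0\le t\le T.
\]
Write $k(r)=\norm{P_{\Aa}(r)}$. This function lies in $L^1(0,T)$ by assumption; for the Laplacian it is even bounded by $C_Tr^{\alpha_1-1}$ via \eqref{eq:P-regularity}. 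By absolute continuity of the integral, choose $h\in(0,T]$ depending only on $R$ and $T$ such that $L_R\int_0^hk(r)\,dr\le\tfrac12$.

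Next we step through $[0,T]$. Let $t_j=jh$ and $m_j=\sup_{[0,t_j]}\norm{w}$. For $t\in[t_j,t_{j+1}]$, split the integral into $[0,t_j]$ and $[t_j,t]$. The first piece is bounded by $L_R\norm{k}_{L^1(0,T)}m_j$. The second is bounded by $\tfrac12\sup_{[t_j,t_{j+1}]}\norm{w}$, which can be absorbed into the left-hand side. Taking suprema, this yields
\[
 m_{j+1}\le 2M_T\norm{u_0-v_0}+\bigl(1+2L_R\norm{k}_{L^1(0,T)}\bigr)m_j,
 \qquad m_0=0.
\]
After $N=\lceil T/h\rceil$ steps we obtain \eqref{eq:abstract-continuous-dependence}, with $C_{R,T}$ given by an explicit geometric sum. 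An alternative is the fractional Gronwall (Henry) lemma with kernel $t^{\alpha_1-1}$, but the stepping argument needs only local integrability of $k$ and so fits the abstract hypotheses.

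The main obstacle is the absorption step. It requires the sup on $[t_j,t_{j+1}]$ to be finite, which holds because $u,v\in C([0,T];E)$. It also requires the history integral over $[0,t_j]$, a feature of the Volterra memory, to be controlled by $m_j$ and not to reintroduce the current interval. Both conditions are satisfied, so the argument is routine once $h$ is chosen uniformly in terms of $R$ and $T$.
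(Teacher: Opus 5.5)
Your proof is correct and is essentially the paper's argument: subtract the mild formulations, choose $h$ with $L_R\int_0^h\norm{P_{\Aa}(r)}\,dr\le\tfrac12$, absorb the recent history on each subinterval while bounding the old history by the previously controlled supremum, and induct over $\lceil T/h\rceil$ steps. One cosmetic slip: $m_0=\sup_{[0,0]}\norm{w}=\norm{u_0-v_0}$, not $0$, but the old history is empty for $j=0$, so $m_1\le 2M_T\norm{u_0-v_0}$ holds directly and nothing else changes.
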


\begin{proof}
Let $M_T=\sup_{0\le t\le T}\norm{S_{\Aa}(t)}$ and let $L_R$ be a Lipschitz constant of $\Nn$ on the relevant ball.
The difference satisfies
\[
 \norm{u(t)-v(t)}
 \le M_T\norm{u_0-v_0}
 +L_R\int_0^t\norm{P_{\Aa}(t-s)}\norm{u(s)-v(s)}d s.
\]
Choose $h>0$ so that $L_R\int_0^h\norm{P_{\Aa}(r)}d r\le1/2$.
On the first interval the recent history is absorbed, giving a bound by $2M_T\norm{u_0-v_0}$.
On every subsequent interval of length at most $h$, the already controlled old history is finite and the recent part is again absorbed.
A finite induction over $\lceil T/h\rceil$ intervals gives \eqref{eq:abstract-continuous-dependence}.
\end{proof}

\begin{lemma}
\label{lem:abstract-continuation}
Let $u\in C([0,T_0];E)$ satisfy \eqref{eq:abstract-mild}.
Then there is $\delta>0$ such that $u$ extends uniquely to $[0,T_0+\delta]$.
\end{lemma}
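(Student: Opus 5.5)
We look for the extension as $u(t)=w(t-T_0)$ with $w\in C([0,\delta];E)$. The full history on $[0,T_0]$ is kept as a known forcing term. For $\tau\in[0,\delta]$ we rewrite \eqref{eq:abstract-mild} at time $T_0+\tau$ as
\[
 u(T_0+\tau)=F(\tau)+\int_{0}^{\tau}P_{\Aa}(\tau-r)\Nn(w(r))\,d r,
\]
where
\[
 F(\tau)=S_{\Aa}(T_0+\tau)u_0+\int_0^{T_0}P_{\Aa}(T_0+\tau-s)\Nn(u(s))\,d s
\]
depends only on the given solution on $[0,T_0]$. We cannot restart with initial datum $u(T_0)$, because the memory does not give a semigroup property. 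We therefore treat $F$ as a fixed forcing term and run the contraction argument of Theorem~\ref{thm:abstract-local} with $S_{\Aa}(\cdot)u_0$ replaced by $F$.

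\emph{Step 1: $F\in C([0,\delta_1];E)$ and $F(0)=u(T_0)$.} The first term is continuous because $S_{\Aa}$ is strongly continuous. For the history term, write it as $\int_\tau^{T_0+\tau}P_{\Aa}(r)g(T_0+\tau-r)\,dr$ with $g=\Nn(u)$. Here $g$ is continuous and bounded by some $B$ on $[0,T_0]$, since $u$ is continuous and $\Nn$ is Lipschitz on bounded sets. Continuity in $\tau$ then follows from three facts:
\begin{itemize}
\item absolute continuity of $r\mapsto\int\norm{P_{\Aa}(r)}$, which controls the moving endpoints;
\item continuity of translation of $g$ in the convolution (approximate $g$ uniformly, or use the $L^1$ continuity of translations of $\norm{P_{\Aa}}$ against bounded $g$);
\item the fact that $P_{\Aa}\in L^1((0,T);\mathcal L(E))$.
\end{itemize}
The identity $F(0)=u(T_0)$ is exactly \eqref{eq:abstract-mild} at $t=T_0$.

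\emph{Step 2: contraction.} Set $C_0=\sup_{[0,\delta_1]}\norm{F}$, fix $\rho>0$ and $R=C_0+\rho$, and consider the ball $\{w:\norm{w-F}_{C([0,\delta];E)}\le\rho\}$. Then
\[
 \norm{\mathcal Tw-F}_C\le B_R\int_0^\delta\norm{P_{\Aa}(r)}\,dr,
 \qquad
 \norm{\mathcal Tw-\mathcal Tv}_C\le L_R\int_0^\delta\norm{P_{\Aa}(r)}\,dr\,\norm{w-v}_C.
\]
Choosing $\delta$ small gives a unique fixed point $w$. Since $w(0)=F(0)=u(T_0)$, the concatenation of $u$ and $w(\cdot-T_0)$ is continuous on $[0,T_0+\delta]$ and satisfies \eqref{eq:abstract-mild} there.

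\emph{Step 3: uniqueness.} Uniqueness of the extension follows from the uniqueness part of Theorem~\ref{thm:abstract-local}: any two mild solutions on $[0,T_0+\delta]$ with initial value $u_0$ coincide.

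The main obstacle is Step 1, the continuity of the history term. $P_{\Aa}$ is only locally integrable, with a singularity $t^{\alpha_1-1}$, and $E$-valued translation continuity has to be justified. I would split the history integral near $s=T_0$, where the singular part lies, and bound that piece by $B\int_0^{2\eta}\norm{P_{\Aa}}$. On the remaining region $P_{\Aa}$ is norm-continuous away from $0$ by \eqref{eq:P-regularity}, so dominated convergence applies.
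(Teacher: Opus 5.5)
Your proposal is correct and follows the paper's proof: the same full-history forcing term (the paper's $g_{T_0}$), the same contraction on a $\rho$-ball around it with $\delta$ chosen so that $B_R\int_0^\delta\norm{P_{\Aa}}\le\rho$ and $L_R\int_0^\delta\norm{P_{\Aa}}<1$, and the same concatenation. Your appeal to the uniqueness part of Theorem~\ref{thm:abstract-local} states the uniqueness claim more explicitly than the paper does. On the one point where you diverge, the continuity of the history term, the paper translates the zero-extended kernel $P_{\Aa}$ itself (not merely the scalar $\norm{P_{\Aa}}$) in $L^1(\RR;\mathcal L(E))$, which needs only the abstract integrability of $P_{\Aa}$. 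Your uniform-continuity-of-$\Nn(u)$ variant also needs only that, whereas your preferred splitting argument uses norm continuity of $P_{\Aa}$ on $(0,\infty)$: this holds for the families of Proposition~\ref{prop:linear-families}, but it is not among the abstract hypotheses of Section~\ref{subsec:abstract-wp}.
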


\begin{proof}
Fix $\delta_0>0$ and, for $0\le s\le\delta_0$, define the known history
\begin{equation}
\label{eq:abstract-history}
g_{T_0}(s)=S_{\Aa}(T_0+s)u_0
 +\int_0^{T_0}P_{\Aa}(T_0+s-r)\Nn(u(r))d r.
\end{equation}
It satisfies $g_{T_0}(0)=u(T_0)$.
To prove continuity, extend $P_{\Aa}|_{(0,T_0+2\delta_0)}$ by zero to an element of $L^1(\RR;\mathcal L(E))$.
Translation continuity in this space, together with continuity and boundedness of $\Nn(u)$, proves that the integral in \eqref{eq:abstract-history} is continuous in $s$.
Hence $g_{T_0}\in C([0,\delta_0];E)$.

Let $G_0=\norm{g_{T_0}}_{C([0,\delta_0];E)}$.
For a fixed $\rho>0$, set $R=G_0+\rho$, and let $L_R$ and $B_R$ denote the Lipschitz constant and the bound of $\Nn$ on the ball of radius $R$.
Since $P_{\Aa}$ is locally integrable, choose $0<\delta\le\delta_0$ so that
\[
 B_R\int_0^\delta\norm{P_{\Aa}(r)}d r\le\rho,
 \qquad
 L_R\int_0^\delta\norm{P_{\Aa}(r)}d r<1.
\]
On the closed set
\[
 \mathcal F_{\rho,\delta}
 =\{w\in C([0,\delta];E):
 \norm{w-g_{T_0}}_{C([0,\delta];E)}\le\rho\},
\]
define
\[
 (\mathcal Kw)(s)=g_{T_0}(s)
 +\int_0^sP_{\Aa}(s-r)\Nn(w(r))d r.
\]
The displayed choices make $\mathcal K$ invariant and contractive, so it has a unique fixed point $w$.
Since $w(0)=g_{T_0}(0)=u(T_0)$, define $\widetilde u(T_0+s)=w(s)$.
Substitution of \eqref{eq:abstract-history} combines the old and recent histories into the single integral in \eqref{eq:abstract-mild}.
This proves the extension and its uniqueness.
\end{proof}

\begin{theorem}
\label{thm:abstract-maximal}
For every $u_0\in E$, there is a unique maximal mild solution
\[
 u\in C([0,T_{\max});E),
 \qquad 0<T_{\max}\le\infty.
\]
If $T_{\max}<\infty$, then
\begin{equation}
\label{eq:abstract-alternative}
\limsup_{t\uparrow T_{\max}}\norm{u(t)}_E=\infty.
\end{equation}
\end{theorem}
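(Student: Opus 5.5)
The plan is to build the maximal solution by gluing the local solutions from Theorem~\ref{thm:abstract-local} and Lemma~\ref{lem:abstract-continuation}, and then to prove the alternative \eqref{eq:abstract-alternative} by contradiction. The contradiction argument uses the continuation lemma, with an extension length chosen uniformly under a bound on $u$.

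\emph{Construction.} Let $\mathcal J$ be the set of all $T>0$ for which \eqref{eq:abstract-mild} has a solution in $C([0,T];E)$, and set $T_{\max}=\sup\mathcal J$. By Theorem~\ref{thm:abstract-local}, $\mathcal J$ is nonempty, and the uniqueness part of that theorem makes any two such solutions coincide on their common interval. Defining $u(t)$ as the value of any solution on $[0,T]$ with $T>t$ therefore gives a well-defined $u\in C([0,T_{\max});E)$ satisfying \eqref{eq:abstract-mild} at each $t<T_{\max}$. The same uniqueness shows that any other mild solution on an interval $[0,T')$ is a restriction of $u$, so $u$ is unique and maximal. If $T_{\max}<\infty$, then $T_{\max}\notin\mathcal J$: otherwise Lemma~\ref{lem:abstract-continuation} would extend the solution beyond $T_{\max}$.

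\emph{Blow-up alternative.} Suppose $T_{\max}<\infty$ and $\sup_{[0,T_{\max})}\norm{u(t)}\le M<\infty$. I will show that $u$ extends continuously to $T_{\max}$ and still solves \eqref{eq:abstract-mild} there, which contradicts $T_{\max}\notin\mathcal J$. This step is the main obstacle, because the memory term does not simply restart. Set $f=\Nn(u)$, which satisfies $\norm{f(s)}\le B_M$. For $t<t'<T_{\max}$, split
\[
\int_0^{t'}P_{\Aa}(t'-s)f(s)\,ds-\int_0^{t}P_{\Aa}(t-s)f(s)\,ds
=\int_t^{t'}P_{\Aa}(t'-s)f(s)\,ds+\int_0^{t}\bigl[P_{\Aa}(t'-s)-P_{\Aa}(t-s)\bigr]f(s)\,ds .
\]
The first term is bounded by $B_M\int_0^{t'-t}\norm{P_{\Aa}(r)}\,dr$. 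The second is bounded by $B_M\norm{P_{\Aa}(\cdot+t'-t)-P_{\Aa}}_{L^1(0,T_{\max})}$, after extending $P_{\Aa}$ by zero exactly as in the proof of Lemma~\ref{lem:abstract-continuation}. Both bounds tend to zero as $t'-t\to0$, by absolute continuity of the integral and continuity of translation in $L^1$. The term $S_{\Aa}(t)u_0$ is continuous on $[0,T_{\max}]$ by strong continuity. Hence $u(t)$ is Cauchy as $t\uparrow T_{\max}$ and has a limit $u(T_{\max})$.

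To finish, I pass to the limit in \eqref{eq:abstract-mild}: the same translation estimate shows that $\int_0^{T_{\max}}P_{\Aa}(T_{\max}-s)f(s)\,ds$ is the limit of the memory terms, where $f$ is bounded and measurable on $[0,T_{\max})$. The extended function therefore lies in $C([0,T_{\max}];E)$ and solves \eqref{eq:abstract-mild}, so $T_{\max}\in\mathcal J$, which is the desired contradiction. Hence boundedness fails, and \eqref{eq:abstract-alternative} holds. An alternative route applies Lemma~\ref{lem:abstract-continuation} at times $T_0$ close to $T_{\max}$ with a $\delta$ independent of $T_0$. Its choice of $\delta$ depends only on $G_0$, which is controlled by $M_{T}\norm{u_0}+B_M\int_0^{T_{\max}+\delta_0}\norm{P_{\Aa}}$, but that route requires checking the uniformity of $G_0$, so I prefer the direct limit argument.
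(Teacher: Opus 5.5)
Your proposal is correct and takes essentially the same route as the paper. You glue the local solutions using uniqueness, and then, assuming boundedness, you use translation continuity of the zero-extended $L^1$ kernel $P_{\Aa}$ to obtain a limit $u(T_{\max})$ and contradict maximality through the full-history continuation of Lemma~\ref{lem:abstract-continuation}. Your explicit splitting of the memory term, and your check that the limit satisfies \eqref{eq:abstract-mild} at $T_{\max}$, spell out steps the paper leaves implicit.
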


\begin{proof}
Let $\mathcal I$ be the set of times $T>0$ for which a mild solution exists on $[0,T]$ and define $T_{\max}=\sup\mathcal I$.
Theorem~\ref{thm:abstract-local} shows that $\mathcal I$ is nonempty.
Local uniqueness makes all such solutions compatible on overlaps, and their union defines a unique solution on $[0,T_{\max})$.
Lemma~\ref{lem:abstract-continuation} shows that no solution can have $T_{\max}$ as a finite right endpoint while already being defined continuously there.

Suppose now that $T_{\max}<\infty$ but $\sup_{t<T_{\max}}\norm{u(t)}_E<\infty$.
Then $\Nn(u)$ is bounded.
Extend it by zero outside $(0,T_{\max})$, and extend $P_{\Aa}|_{(0,T_{\max}+1)}$ by zero to the real line.
{Convolution of an $L^1$ operator kernel with a bounded $E$-valued function is uniformly continuous under translations.
Hence
\[
 t\longmapsto\int_0^tP_{\Aa}(t-s)\Nn(u(s))d s
\]
has a limit in $E$ as $t\uparrow T_{\max}$.}
Strong continuity of $S_{\Aa}$ gives a corresponding limit of $u(t)$.
Defining $u(T_{\max})$ by that limit and applying the full-history construction of Lemma~\ref{lem:abstract-continuation} at $T_{\max}$ extends the solution beyond $T_{\max}$, a contradiction.
{This proves \eqref{eq:abstract-alternative}.
We do not assert that the norm has an ordinary limit of infinity.}
\end{proof}

\subsection{The nonnegative Scarpi heat equation on
\texorpdfstring{$\RR^n$}{R n}}
\label{subsec:concrete-wp}

For the whole-space problem, we set $X=\Xspace$ with norm $\norm{v}_X=\norm{v}_1+\norm{v}_\infty$ and, for $T>0$, define
\begin{equation*}
\mathcal X_T=C([0,T];L^1(\RR^n))
\cap L^\infty((0,T);L^\infty(\RR^n)),
\end{equation*}
with norm
\[
 \norm{v}_{\mathcal X_T}
 =\sup_{0\le t\le T}\norm{v(t)}_1
 +\operatorname*{ess\,sup}_{0<t<T}\norm{v(t)}_\infty.
\]
This mixed space is necessary because $S_{\Aa}(t)$ is not strongly continuous at zero on all of $L^\infty$.
{If $u_0\in L^1\cap BUC$, the abstract theory applies directly in $L^1\cap BUC$.
For general $u_0\in X$, the same contraction is carried out in $\mathcal X_T$.}

We now apply the abstract theory to the concrete Scarpi--Volterra problem \eqref{eq:model} on $\RR^n$, with $\Aa=\Delta$.
The linear families $S_{\Aa}$ and $P_{\Aa}$ are given by Proposition~\ref{prop:positive-pair}.
For noninteger $p$, extend the reaction to real arguments by $\Nn(v)=(v_+)^p$.
If $\norm{v}_\infty,\norm{w}_\infty\le R$, then
\begin{equation}
\label{eq:reaction-lipschitz}
\norm{\Nn(v)-\Nn(w)}_1
+\norm{\Nn(v)-\Nn(w)}_\infty
\le pR^{p-1}\norm{v-w}_X.
\end{equation}
A mild solution of \eqref{eq:model} is a function in $\mathcal X_T$ satisfying
\begin{equation}
\label{eq:mild}
u(t)=S_{\Aa}(t)u_0+\int_0^tP_{\Aa}(t-s)(u(s)_+)^pd s
\end{equation}
in $L^1$ for every $t\in[0,T]$ and in $L^\infty$ for almost every $t>0$.

\begin{theorem}
\label{thm:wellposedness}
Assume Assumption~\ref{ass:admissible}, let $u_0\in X$, $u_0\ge0$, and let $p>1$.
There is a unique nonnegative maximal mild solution of \eqref{eq:mild} such that
\begin{equation*}
u\in C([0,T_{\max});L^1),
\qquad
\operatorname*{ess\,sup}_{0<t<T}\norm{u(t)}_\infty<\infty
\quad\text{for every }T<T_{\max},
\end{equation*}
where $0<T_{\max}\le\infty$.
If $u_0\in L^1\cap BUC$, then $u\in C([0,T_{\max});L^1\cap BUC)$.
If $0\le u_0\le v_0$, the corresponding solutions satisfy $u(t)\le v(t)$ almost everywhere on their common existence interval.
On any common finite interval on which two solutions remain bounded in $\mathcal X_T$, they depend continuously on their initial data.
\end{theorem}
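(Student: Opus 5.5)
The proof reruns the abstract contraction of Theorem~\ref{thm:abstract-local}, Lemma~\ref{lem:abstract-continuation} and Theorem~\ref{thm:abstract-maximal} in the mixed space $\mathcal X_T$ rather than in a single Banach space $E$. The only linear inputs are the positive convolution structure and the bounds \eqref{eq:positive-resolvent-bounds} from Proposition~\ref{prop:positive-pair}.

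\textbf{Local existence.} Set $C_0=\norm{u_0}_X$. Since $\norm{S_{\Aa}(t)u_0}_q\le\norm{u_0}_q$ for $q\in\{1,\infty\}$, we have $\norm{S_{\Aa}(\cdot)u_0}_{\mathcal X_T}\le C_0$. The map $t\mapsto S_{\Aa}(t)u_0$ is continuous into $L^1$ by the strong continuity on $L^1$. For the Duhamel term we use $\norm{P_{\Aa}(r)g}_q\le\psi(r)\norm{g}_q$ together with \eqref{eq:reaction-lipschitz}. On the ball of radius $\rho$ around $S_{\Aa}(\cdot)u_0$ in $\mathcal X_T$, with $R=C_0+\rho$, we obtain the bounds $K(T)\,B_R$ and $K(T)\,pR^{p-1}$ for invariance and contraction. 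Here $K(T)=\int_0^T\psi\to0$ as $T\to0$.

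Continuity of the Duhamel term into $L^1$ follows from the $L^1$ translation continuity argument of Lemma~\ref{lem:abstract-continuation}, applied with the scalar majorant $\psi$. Measurability in $L^\infty$ only needs to hold almost everywhere, which is why the mixed space is used. If $u_0\in L^1\cap BUC$, the same contraction is run in $C([0,T];L^1\cap BUC)$, using the strong continuity of $S_{\Aa}$ on $BUC$. Uniqueness, continuation with the full history, and the maximal solution are then obtained verbatim from the abstract proofs, replacing $\norm{P_{\Aa}}$ by $\psi$. The same holds for continuous dependence on intervals where both solutions stay bounded in $\mathcal X_T$: the step-by-step absorption of Proposition~\ref{prop:abstract-continuous-dependence} applies with $\int_0^h\psi$ small.

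\textbf{Nonnegativity and comparison.} Both come from positivity of $Z(t)$ and $Y(t)$ and monotonicity of $v\mapsto(v_+)^p$. The Picard iterates $u^{k+1}=\mathcal T u^k$, started from $u^0=S_{\Aa}(\cdot)u_0\ge0$, are nonnegative, and they converge to the fixed point in $\mathcal X_{T_*}$, so $u\ge0$ on $[0,T_*]$.

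For comparison with $0\le u_0\le v_0$, take the Picard iterates for both problems on a common short interval. Run both in a common ball, with $R$ controlled by $\norm{v_0}_X$ plus $\rho$. Since $\mathcal T$ is order preserving, induction gives $u^k\le v^k$, and passing to the limit gives $u\le v$ on $[0,T_*]$.

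\textbf{Main obstacle: propagation to all existence times.} Continuation restarts with a history term $g_{T_0}$ rather than an initial value. Order preservation must therefore be checked for the restarted map $\mathcal K$. This works because $g_{T_0}$ is monotone in the past history: it is built from $S_{\Aa}(T_0+s)u_0$ and from $P_{\Aa}\ge0$ acting on $(u_+)^p$ over $[0,T_0]$. Hence $g^u_{T_0}\le g^v_{T_0}$ once $u\le v$ on $[0,T_0]$, and nonnegativity propagates in the same way. One more check is needed: $u$ cannot blow up later than $v$ does, since the solutions are compared on their common interval, where both are bounded. The step lengths are uniform on each compact subinterval, so finitely many restarts cover it.

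Finally, since nonnegative solutions satisfy $(u_+)^p=u^p$, the solution constructed with the truncated nonlinearity $\Nn(v)=(v_+)^p$ is a solution of \eqref{eq:mild} with the reaction $u^p$.
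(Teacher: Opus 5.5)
Your proposal is correct and follows essentially the same route as the paper. You run the contraction in the mixed space $\mathcal X_T$ with smallness coming from $K(T)$, obtain nonnegativity and comparison from ordered Picard iterates, and use history-based continuation, uniqueness and continuous dependence with $\int_0^h\norm{P_{\Aa}}$ replaced by $K(h)$. Your explicit check that the restarted map $\mathcal K$ is order preserving, because $g_{T_0}$ is monotone in the past history, is a more careful version of the paper's brief appeal to uniqueness for propagating comparison. One side remark is reversed: since $u\le v$, $u$ cannot blow up \emph{earlier} than $v$, not later; this is harmless because the comparison is only asserted on the common interval.
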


\begin{proof}
By Proposition~\ref{prop:positive-pair},
\[
 \norm{S_{\Aa}(t)g}_X\le\norm{g}_X,
\qquad
 \norm{P_{\Aa}(t)g}_X\le\psi(t)\norm{g}_X
\]
for the relevant representatives.
The linear term belongs to $\mathcal X_T$ and attains $u_0$ strongly in $L^1$.
Fix $\rho>0$, let $R=\norm{u_0}_X+\rho$, and consider the closed ball in $\mathcal X_T$ centered at $S_{\Aa}(\cdot)u_0$ with radius $\rho$.
Every function in this ball has $X$-norm bounded by $R$ almost everywhere.
The mild map $\mathcal T$ therefore satisfies
\[
 \norm{\mathcal Tv-S_{\Aa}(\cdot)u_0}_{\mathcal X_T}
 \le C R^p K(T)
\]
and, by \eqref{eq:reaction-lipschitz},
\[
 \norm{\mathcal Tv-\mathcal Tw}_{\mathcal X_T}
 \le pR^{p-1}K(T)\norm{v-w}_{\mathcal X_T}.
\]
The $L^1$ component of the convolution is continuous because $t\mapsto(v(t)_+)^p$ is continuous in $L^1$ on a uniformly bounded $L^\infty$ range and $P_{\Aa}$ is locally integrable as an $L^1$ operator kernel.
Its $L^\infty$ component is essentially bounded by the second estimate in \eqref{eq:positive-resolvent-bounds}.
Since $K(T)\downarrow0$ as $T\downarrow0$, the map is invariant and contractive for small $T$.
This proves local existence and uniqueness.
The finite-step uniqueness and continuous dependence arguments of Theorem~\ref{thm:abstract-local} and Proposition~\ref{prop:abstract-continuous-dependence} apply with $\int_0^h\norm{P_{\Aa}(r)}d r$ replaced by $K(h)$.

Starting Picard iteration from $S_{\Aa}(t)u_0\ge0$ gives an increasing nonnegative sequence because $S_{\Aa}(t)$ and $P_{\Aa}(t)$ are positive and $r\mapsto r^p$ is increasing on $[0,\infty)$.
Its contraction limit is nonnegative.
Applying the same paired iteration to ordered initial data proves comparison locally, and uniqueness propagates it through the common interval of existence.

For continuation at $T_0$, define the history as in \eqref{eq:abstract-history} with $\Nn(u)=u^p$.
Its $L^1$ component is continuous: $u^p$ is continuous in $L^1$ on bounded $L^\infty$ ranges by \eqref{eq:reaction-lipschitz}, and the zero extension of $P_{\Aa}$ is translation-continuous in $L^1((0,T);\mathcal L(L^1))$.
Its $L^\infty$ component is bounded by \eqref{eq:positive-resolvent-bounds}.
The same mixed-space contraction therefore extends every solution whose $X$-norm is uniformly bounded up to the continuation time.
Compatible local solutions combine into the unique maximal solution.
The additional $BUC$ statement follows from the strong continuity and invariance of the heat resolvent families on $BUC$.
\end{proof}

\begin{proposition}
\label{prop:mass-alternative}
The maximal solution in Theorem~\ref{thm:wellposedness} satisfies
\begin{equation}
\label{eq:mass-identity}
\norm{u(t)}_1=\norm{u_0}_1
 +\int_0^t\psi(t-s)\norm{u(s)}_p^pd s,
\qquad 0<t<T_{\max}.
\end{equation}
Moreover, if $T_{\max}<\infty$, then
\begin{equation}
\label{eq:blowup-alternative}
\operatorname*{ess\,sup}_{T<t<T_{\max}}\norm{u(t)}_\infty=\infty
\qquad\text{for every }0<T<T_{\max}.
\end{equation}
In particular, for any representative that is $L^\infty$-continuous for positive times, \eqref{eq:blowup-alternative} implies $\limsup_{t\uparrow T_{\max}}\norm{u(t)}_\infty=\infty$.
\end{proposition}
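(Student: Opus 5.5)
The mass identity comes from integrating the mild formulation \eqref{eq:mild} over $\RR^n$, and the blow-up alternative from the continuation argument in Theorem~\ref{thm:wellposedness}.

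\emph{Mass identity.} Fix $0<t<T_{\max}$. By Theorem~\ref{thm:wellposedness}, $u\ge0$, and \eqref{eq:mild} holds in $L^1$ at time $t$. Integration over $\RR^n$ is a bounded linear functional on $L^1$, so it passes through the Bochner integral. For nonnegative $g\in L^1$, Tonelli's theorem and the mass identities \eqref{eq:mass-kernels} give
\[
\int_{\RR^n}Z(t)*g\,dx=Z(t,\RR^n)\int g=\int g,
\qquad
\int_{\RR^n}Y(r)*g\,dx=\psi(r)\int g
\]
for almost every $r>0$. Applied to $g=u_0$ and to $g=u(s)^p$, with $r=t-s$, this yields
\[
\int u(t)=\int u_0+\int_0^t\psi(t-s)\int u(s)^p\,ds .
\]
Nonnegativity lets us replace $\int u(t)$ by $\norm{u(t)}_1$ and $\int u(s)^p$ by $\norm{u(s)}_p^p$. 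The exceptional null set of $r$ values does not affect the $s$-integral. The integrand is finite because $\norm{u(s)}_p^p\le\norm{u(s)}_\infty^{p-1}\norm{u(s)}_1$ is essentially bounded on $[0,t]$ and $\psi\in L^1_{\mathrm{loc}}$.

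\emph{Blow-up alternative.} Suppose $T_{\max}<\infty$ and $M:=\operatorname*{ess\,sup}_{T<t<T_{\max}}\norm{u(t)}_\infty<\infty$ for some $T$. On $[0,T]$, $u$ is already bounded in $L^\infty$ by Theorem~\ref{thm:wellposedness}, so $u$ is essentially bounded in $L^\infty$ on all of $(0,T_{\max})$. The $L^1$ norm is then controlled as well. The mass identity gives
\[
\norm{u(t)}_1\le\norm{u_0}_1+\bar M^{p-1}\int_0^t\psi(t-s)\norm{u(s)}_1\,ds,
\]
where $\bar M$ bounds $\norm{u(s)}_\infty$ on $(0,T_{\max})$. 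A Volterra--Gronwall step, done on short intervals of length $h$ with $\bar M^{p-1}K(h)\le1/2$ exactly as in Proposition~\ref{prop:abstract-continuous-dependence}, bounds $\sup_{t<T_{\max}}\norm{u(t)}_1$. Hence $u$ is bounded in $X$ up to $T_{\max}$.

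It remains to continue the solution past $T_{\max}$, and this is the delicate step. As in Theorem~\ref{thm:abstract-maximal}, we show that $u(t)$ has an $L^1$ limit as $t\uparrow T_{\max}$. The term $S_{\Aa}(t)u_0$ is $L^1$-continuous up to $T_{\max}$. For the integral term, extend $u^p$ by zero; it is bounded in $L^1$. The zero extension of $P_{\Aa}$ lies in $L^1(\mathcal L(L^1))$ and is translation continuous, so the integral term converges. Then we take the history $g_{T_{\max}}$ from \eqref{eq:abstract-history}. Its $L^1$ component is continuous and its $L^\infty$ component is bounded by \eqref{eq:positive-resolvent-bounds} and $\bar M$. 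The mixed-space contraction from the proof of Theorem~\ref{thm:wellposedness} extends $u$ beyond $T_{\max}$, contradicting maximality.

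The main obstacle is that $S_{\Aa}$ is not strongly continuous on $L^\infty$, so this continuation must work in $\mathcal X_T$ using only essential bounds, which the mixed contraction already permits. The final claim follows at once: if the representative is $L^\infty$-continuous on $(0,T_{\max})$, the essential supremum over $(T,T_{\max})$ equals the pointwise supremum. This supremum is infinite for every $T$, and a continuous function is bounded on compact subintervals, so $\limsup_{t\uparrow T_{\max}}\norm{u(t)}_\infty=\infty$.
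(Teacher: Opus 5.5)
Your proposal is correct and follows essentially the same route as the paper. You integrate the mild formula over $\RR^n$ using Tonelli and the kernel masses $Z(t,\RR^n)=1$, $Y(t,\RR^n)=\psi(t)$; then, assuming a terminal essential $L^\infty$ bound, you close the mass inequality by the stepwise Volterra--Gronwall absorption with $M^{p-1}K(h)<1/2$ and invoke the mixed-space continuation to contradict maximality. Your extra details fill in steps the paper leaves implicit: the $L^1$ limit at $T_{\max}$ via translation continuity, as in Theorem~\ref{thm:abstract-maximal}, and the fact that the essential supremum equals the supremum for an $L^\infty$-continuous representative.
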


\begin{proof}
Integrate \eqref{eq:mild} over space.
Positivity permits Tonelli's theorem, and the two mass identities in \eqref{eq:mass-kernels} give \eqref{eq:mass-identity}.

Suppose instead that the $L^\infty$ norm were essentially bounded by $M$ on some terminal interval and hence, together with local boundedness, on all of $(0,T_{\max})$.
Then
\[
 \norm{u(t)}_p^p\le M^{p-1}\norm{u(t)}_1
\]
for almost every $t$.
The mass satisfies a linear Volterra--Gronwall inequality with the nonnegative locally integrable kernel $\psi$.
For completeness, choose $h>0$ with $M^{p-1}K(h)<1/2$.
Taking suprema first on $[0,h]$ and then successively on intervals of length $h$ bounds the mass on every finite interval: at each step the old-history contribution is already finite, while the recent-history term is absorbed by the factor $1/2$.
Thus a uniform essential $L^\infty$ bound gives a uniform $X$ bound.
The continuation argument in Theorem~\ref{thm:wellposedness} would extend the solution beyond $T_{\max}$, proving \eqref{eq:blowup-alternative}.
\end{proof}

\subsection{Weak Volterra formulation}
\label{subsec:weak-identity}

\begin{proposition}
\label{prop:weak-identity}
Let $u$ be the solution in Theorem~\ref{thm:wellposedness}.
For every $v\in\mathcal S(\RR^n)$ and $0<t<T_{\max}$,
\begin{equation}
\label{eq:weak-identity}
\ip{u(t)}{v}
=\ip{u_0}{v}
+\int_0^t\psi(t-s)
\left(\ip{u(s)}{\Delta v}+\ip{u(s)^p}{v}\right)d s.
\end{equation}
\end{proposition}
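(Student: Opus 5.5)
We pair the mild formula \eqref{eq:mild} with $v$ and move the convolution operators onto the test function. Write $\ip{S_{\Aa}(t)u_0}{v}=\ip{u_0}{\widetilde S(t)v}$ and $\ip{P_{\Aa}(t-s)f}{v}=\ip{f}{\widetilde P(t-s)v}$. Here $\widetilde S(t)v=\widetilde Z(t)*v$ and $\widetilde P(t)v=\widetilde Y(t)*v$, with $\widetilde Z,\widetilde Y$ the reflections of the positive measures $Z,Y$ from Proposition~\ref{prop:positive-pair}. Since the Gaussian measures $G_r$ are symmetric, $Z$ and $Y$ are even, so in fact $\widetilde S=S_{\Aa}$ and $\widetilde P=P_{\Aa}$. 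The pairing of $f\in L^1$ with $v\in L^\infty$ and positivity of the measures justify this transposition by Fubini. It then suffices to establish two identities for Schwartz test functions, holding pointwise or almost everywhere in the appropriate sense:
\begin{equation*}
S_{\Aa}(t)v=v+\int_0^t\psi(t-r)S_{\Aa}(r)\Delta v\,dr,\qquad
P_{\Aa}(t)v=\psi(t)v+\int_0^t\psi(t-r)P_{\Aa}(r)\Delta v\,dr .
\end{equation*}
The first is the Volterra identity \eqref{eq:S-volterra-identity} for $x=v\in D(\Delta)$, realized in $L^1$ or $BUC$, using that $\Delta$ commutes with $S_{\Aa}$ by \eqref{eq:domain-invariance}. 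I would prove the second, as an identity of locally integrable $L^1$-valued functions of $t$, by Laplace transform in $t$ and Fourier transform in $x$. By \eqref{eq:ZY-transforms}, the left side has symbol $(\Psi^{-1}+|\xi|^2)^{-1}$. The right side has symbol $\Psi-\Psi|\xi|^2(\Psi^{-1}+|\xi|^2)^{-1}=\Psi\cdot\Psi^{-1}(\Psi^{-1}+|\xi|^2)^{-1}$, which agrees. Uniqueness of the Laplace transform, as in Proposition~\ref{prop:linear-families}, then gives the identity for almost every $t$.

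With these identities, the pairing of \eqref{eq:mild} with $v$ becomes
\begin{equation*}
\ip{u(t)}{v}=\ip{u_0}{v}+\int_0^t\psi(t-r)\ip{u_0}{S_{\Aa}(r)\Delta v}dr+\int_0^t\ip{u(s)^p}{\psi(t-s)v}ds+\int_0^t\int_0^{t-s}\psi(t-s-r)\ip{u(s)^p}{P_{\Aa}(r)\Delta v}\,dr\,ds .
\end{equation*}
In the last term we substitute $\sigma=s+r$ and exchange the order of integration. This exchange is justified by Tonelli, since $\psi\ge0$, $\norm{P_{\Aa}(r)\Delta v}_\infty\le\psi(r)\norm{\Delta v}_\infty$, and $\sup_{s\le t}\norm{u(s)^p}_1<\infty$ for $t<T_{\max}$ by Theorem~\ref{thm:wellposedness}. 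After the exchange, the last term equals $\int_0^t\psi(t-\sigma)\int_0^\sigma\ip{P_{\Aa}(\sigma-s)u(s)^p}{\Delta v}\,ds\,d\sigma$. Combining this with the second term and using \eqref{eq:mild} at time $\sigma$ gives $\int_0^t\psi(t-\sigma)\ip{u(\sigma)}{\Delta v}d\sigma$. The third term is already $\int_0^t\psi(t-s)\ip{u(s)^p}{v}ds$. Together these yield \eqref{eq:weak-identity}.

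The main obstacle is the almost-everywhere nature of $P_{\Aa}$ and of the $L^\infty$ part of the mild identity. For this reason I would carry out every pairing in $L^1\times L^\infty$, with $u(s)\in L^1$ continuous and $v,\Delta v\in L^\infty$. Then only $L^1$-valued versions of the mild formula, which hold for every $t$, are needed, and the identity for $P_{\Aa}$ only needs to hold for almost every $r$, since it appears inside integrals. Finally, both sides of \eqref{eq:weak-identity} are continuous in $t$: the left by $u\in C([0,T_{\max});L^1)$, and the right as a convolution of $\psi\in L^1_{\mathrm{loc}}$ with a bounded function. So equality for almost every $t$ upgrades to every $t\in(0,T_{\max})$.
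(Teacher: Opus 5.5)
Your proof is correct, but it takes a different route from the paper's.

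\textbf{The paper's route.} It truncates the reaction term at some $T<T_{\max}$ and extends the mild formula to a function $\bar u$ on $[0,\infty)$. It then Laplace-transforms this whole nonlinear solution using \eqref{eq:PS-Laplace}, and multiplies by $\Psi(z)^{-1}I-\Delta$ and by $\Psi(z)$ to obtain $\widehat{\bar u}-u_0/z=\Psi(\Delta\widehat{\bar u}+\widehat f)$. Inverting gives $\bar u-u_0=\psi*(\Delta\bar u+f)$ in space--time distributions, and only then does the paper pair with $v$.

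\textbf{Your route.} You never transform the solution. Transforms appear only for the linear families acting on the Schwartz function:
\begin{itemize}
\item the $S$-identity is \eqref{eq:S-volterra-identity} combined with commutation;
\item the $P$-identity is a one-line symbol check.
\end{itemize}
The nonlinear history is then handled by a time-domain Tonelli/Fubini exchange, followed by reusing the mild formula at time $\sigma$.

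\textbf{What your route costs.} You need the positive convolution representation in two places:
\begin{itemize}
\item To transpose, you need evenness of $Z(t)$ and of $Y(t)$. For $Y(t)$ this holds for a.e.\ $t$, by uniqueness of the disintegration of the reflection-invariant measure $\mathcal Y$. That is all you need, and alternatively the real radial symbol makes the reflected families coincide.
\item To make the exchange absolutely convergent, you need the bound $\norm{P_{\Aa}(r)\Delta v}_\infty\le\psi(r)\norm{\Delta v}_\infty$ with $\psi\ge0$.
\end{itemize}

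\textbf{What your route gains.} It avoids the steps the paper leaves implicit:
\begin{itemize}
\item exponential-order control of $\bar u$ in a setting where $P_{\Aa}$ is only an a.e.-defined $L^\infty$ family;
\item giving meaning to $(\Psi(z)^{-1}I-\Delta)$ applied to $\widehat{\bar u}(z)$ in $\mathcal S'$;
\item distributional Laplace inversion.
\end{itemize}
Your computation also yields the identity at each fixed $t$ directly. The mild formula holds in $L^1$ for every $t$, and the $P$-identity is only needed for a.e.\ $r$. So your final continuity upgrade from a.e.\ $t$ to every $t$ is harmless but unnecessary.

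The paper's argument is shorter and stays within the abstract operator calculus of Proposition~\ref{prop:linear-families}. Yours is more explicit about integrability and trades that abstraction for a dependence on positivity.
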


\begin{proof}
Fix $T<T_{\max}$ and extend $f(t)=u(t)^p$ by zero for $t\ge T$.
Define for all $t\ge0$
\[
 \bar u(t)=S_{\Aa}(t)u_0+\int_0^tP_{\Aa}(t-s)f(s)d s.
\]
Then $\bar u=u$ on $[0,T]$.
The fixed-contour bounds for $S_{\Aa}$ and $P_{\Aa}$, together with the compact temporal support of $f$, make $\bar u$ of exponential order.
The operator Laplace formulas in \eqref{eq:PS-Laplace}, with $\Aa=\Delta$, therefore give in $\mathcal S'(\RR^n)$
\[
 \widehat{\bar u}(z)
 =\frac{1}{z\Psi(z)}(\Psi(z)^{-1}I-\Delta)^{-1}u_0
 +(\Psi(z)^{-1}I-\Delta)^{-1}\widehat{f}(z).
\]
Multiplying by $\Psi(z)^{-1}I-\Delta$ and then by $\Psi(z)$ yields
\[
 \widehat{\bar u}(z)-\frac{u_0}{z}
 =\Psi(z)\bigl(\Delta\widehat{\bar u}(z)+\widehat{f}(z)\bigr).
\]
Laplace inversion gives $\bar u-u_0=\psi*(\Delta\bar u+f)$ in space--time distributions.
Restricting to $0<t<T$, pairing with $v$, and moving $\Delta$ onto $v$ proves \eqref{eq:weak-identity}.
The pairings and Fubini interchanges are justified because, on every compact subinterval, $u,u^p\in L^1\cap L^\infty$ and $\psi\in L^1_{\mathrm{loc}}$.
\end{proof}
\begin{remark}
  For the constant case $\alpha(t)\equiv\alpha$, $\Psi(z)=z^{-\alpha}$, and the Laplace inversion gives $\bar u-u_0=\psi_\alpha*(\Delta\bar u+f)$ with $\psi_\alpha(t)=\frac{t^{\alpha-1}}{\Gamma(\alpha)}$, which is the classical weak formulation of the Caputo derivative.
\end{remark}

\section{Gaussian moments and finite-time blow-up}
\label{sec:blowup}
Kaplan's weighted-moment method converts the growth of a nonnegative PDE solution into a scalar nonlinear inequality \cite{Kaplan1963}.
Its extension to semilinear equations with nonlocal temporal kernels is developed on bounded domains in \cite{VergaraZacher2017}.
We use a normalized Gaussian, which is the heat kernel at a freely chosen spatial scale.
Its unit mass closes the reaction term by Jensen's inequality, while an explicit lower bound for its Laplacian quantifies the diffusive loss.
The freedom in this spatial scale recovers the Fujita balance.

The spatial reduction and the temporal-memory argument can be separated.
Once a positive functional satisfies a scalar inequality of the form
\[
y(t)\ge y_0+a\int_0^t k(t-s)y(s)^p\,ds,
\]
continuation to time \(T\) is restricted solely by the integrated kernel \(K(t)=\int_0^t k(s)\,ds\).
The following dyadic criterion isolates this memory mechanism.
It is the only step that uses the global lower weak scaling of \(K\).

\begin{lemma}
\label{lem:scalar-obstruction}
Let $k\ge0$ be locally integrable, let $K(t)=\int_0^t k(s)d s$, and suppose that, for some $\beta>0$ and $c_\beta>0$,
\[
K(s)\ge c_\beta(s/t)^\beta K(t),
\qquad 0<s\le t.
\]
If a finite continuous function $y:[0,T]\to(0,\infty)$ satisfies
\begin{equation}
\label{eq:scalar-ineq}
y(t)\ge y_0+a\int_0^t k(t-s)y(s)^pd s,
\qquad a>0,\quad p>1,
\end{equation}
then
\begin{equation}
\label{eq:scalar-necessary}
a c_\beta K(T)y_0^{p-1}
\le 2^{\beta p/(p-1)}.
\end{equation}
Consequently, if $K(t)\to\infty$, no globally finite function can satisfy \eqref{eq:scalar-ineq}.
\end{lemma}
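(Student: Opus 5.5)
The plan is a dyadic bootstrap. We repeatedly feed a lower bound for $y$ on a terminal subinterval of $[0,T]$ back into the integral in \eqref{eq:scalar-ineq}, and lose at each step only a factor coming from the weak scaling of $K$. We argue by contradiction: assume $q:=a c_\beta K(T)y_0^{p-1}2^{-\beta p/(p-1)}>1$, and show that $y$ would then be unbounded on $[0,T]$. This is impossible for a finite continuous function on a compact interval. Note first that $k\ge0$ and $y>0$ make the integral term nonnegative, so \eqref{eq:scalar-ineq} gives $y(t)\ge y_0$ on $[0,T]$ (hence $y_0>0$ may be assumed; if $y_0\le0$ the claim is trivial).

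Set $T_j=T(1-2^{-j})$ and suppose $y\ge M_j$ on $[T_j,T]$, starting from $M_0=y_0$. For $t\in[T_{j+1},T]$ we drop the part of the integral over $[0,T_j]$, which is allowed since $k\ge0$, and also drop $y_0\ge0$. This gives
\[
y(t)\ge aM_j^p\int_{T_j}^t k(t-s)\,ds=aM_j^pK(t-T_j)\ge aM_j^pK(T2^{-(j+1)}),
\]
using that $K$ is nondecreasing and $t-T_j\ge T_{j+1}-T_j=T2^{-(j+1)}$. The weak scaling hypothesis with $s=T2^{-(j+1)}\le t=T$ then yields $y\ge M_{j+1}$ on $[T_{j+1},T]$, with
\[
M_{j+1}=\lambda\,2^{-\beta(j+1)}M_j^p,\qquad \lambda:=a c_\beta K(T).
\]

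Next we solve this recursion. Put $\delta=\beta/(p-1)$ and write $M_j=y_0\,2^{\delta j}x_j$ with $x_0=1$. Since $p\delta-\beta=\delta$, a direct substitution gives
\[
x_{j+1}=\lambda y_0^{p-1}2^{-\beta-\delta}x_j^p=q\,x_j^p,\qquad \beta+\delta=\frac{\beta p}{p-1}.
\]
Hence, if $q>1$, induction gives $x_j\ge q^{1+p+\dots+p^{j-1}}\to\infty$, and so $M_j\to\infty$. But every $M_j$ is a lower bound for $y$ at the point $T$, contradicting finiteness of $y(T)$. Therefore $q\le1$, which is exactly \eqref{eq:scalar-necessary}. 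For the final assertion, if $K(t)\to\infty$ then \eqref{eq:scalar-necessary} fails for $T$ large. So no positive finite continuous $y$ can satisfy \eqref{eq:scalar-ineq} on every $[0,T]$.

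The only delicate point is the bookkeeping in the recursion. The loss $2^{-\beta(j+1)}$ must be absorbed exactly by the geometric ansatz $2^{\delta j}$ with $\delta(p-1)=\beta$, so that the threshold comes out as $2^{\beta p/(p-1)}$. The rest, namely monotonicity of $K$, positivity, and dropping history, is routine.
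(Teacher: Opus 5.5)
Your proposal is correct and follows the paper's argument: the same dyadic points $T(1-2^{-j})$, dropping the history before $T_j$, and the recursion $M_{j+1}\ge ac_\beta K(T)2^{-\beta(j+1)}M_j^p$, which contradicts the finiteness of $y(T)$. The only difference is how you solve the recursion: you use the exact substitution $M_j=y_0 2^{\delta j}x_j$, while the paper takes logarithms and bounds $\liminf_j x_j/p^j$; your closed form also rules out $q=1$, since then $M_j=y_0 2^{\delta j}\to\infty$.
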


\begin{proof}
Put $t_j=T(1-2^{-j})$, $j\ge0$, and $m_j=\inf_{t_j\le t\le T}y(t)$.
For $t\in[t_{j+1},T]$, positivity and \eqref{eq:scalar-ineq} give
\[
y(t)\ge a\int_{t_j}^{t}k(t-s)y(s)^pd s
\ge aK(t-t_j)m_j^p
\ge aK(T2^{-j-1})m_j^p.
\]
Hence, with $\Lambda_T=ac_\beta K(T)$,
\[m_{j+1}\ge \Lambda_T2^{-\beta(j+1)}m_j^p,
\qquad m_0\ge y_0.\]
Writing $x_j=\log m_j$, we have
\begin{equation}
\label{eq:m-iteration}
x_{j+1}\ge\log \Lambda_T-\beta(j+1)\log2+px_j.
\end{equation}
Iterating \eqref{eq:m-iteration} gives 
\[\frac{x_j}{p^j} \ge x_0+\sum_{i=1}^{j}p^{-i}\left(\log \Lambda_T-\beta i \log2\right).\]
And thus,
\[
\liminf_{j\to\infty}\frac{x_j}{p^j}
\ge \log y_0+\frac{\log \Lambda_T}{p-1}
-\frac{\beta p\log2}{(p-1)^2}.
\]
If the right-hand side were positive, then $m_j\to\infty$, contradicting $m_j\le y(T)<\infty$.
Rearrangement gives \eqref{eq:scalar-necessary}.
\end{proof}

We next choose a whole-space analog of the positive eigenfunction used in Kaplan's bounded-domain argument.
For $R>0$, define the normalized Gaussian
\begin{equation*}
\xi_R(x)=(4\pi R^2)^{-n/2}
\exp\!\left(-\frac{|x|^2}{4R^2}\right),
\qquad \int_{\RR^n}\xi_Rd x=1.
\end{equation*}
This Gaussian is used only as a spatial test function.
It satisfies
\begin{equation}
\label{eq:gaussian-laplacian}
\Delta\xi_R
=\left(\frac{|x|^2}{4R^4}-\frac{n}{2R^2}\right)\xi_R
\ge-\frac{n}{2R^2}\xi_R.
\end{equation}
Set
\begin{equation*}
F_R(t)=\int_{\RR^n}u(x,t)\xi_R(x)d x,
\qquad
F_{R,0}=F_R(0).
\end{equation*}

\begin{proposition}
\label{prop:gaussian-ineq}
For $0<t<T_{\max}$,
\begin{equation}
\label{eq:FR-ineq}
F_R(t)\ge F_{R,0}
+\int_0^t\psi(t-s)
\left(F_R(s)^p-\frac{n}{2R^2}F_R(s)\right)d s.
\end{equation}
If
\begin{equation}
\label{eq:gaussian-threshold}
F_{R,0}^{p-1}\ge\frac{n}{R^2},
\end{equation}
then $F_R(t)\ge F_{R,0}$ and
\begin{equation}
\label{eq:FR-pure}
F_R(t)\ge F_{R,0}
+\frac12\int_0^t\psi(t-s)F_R(s)^pd s.
\end{equation}
\end{proposition}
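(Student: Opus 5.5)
Apply the weak Volterra identity of Proposition~\ref{prop:weak-identity} with $v=\xi_R\in\mathcal S(\RR^n)$. This gives, for $0<t<T_{\max}$,
\[
F_R(t)=F_{R,0}+\int_0^t\psi(t-s)\Bigl(\ip{u(s)}{\Delta\xi_R}+\ip{u(s)^p}{\xi_R}\Bigr)ds .
\]
Since $u\ge0$ by Theorem~\ref{thm:wellposedness} and $\psi\ge0$ by Proposition~\ref{prop:positive-pair}, the lower bound \eqref{eq:gaussian-laplacian} yields $\ip{u(s)}{\Delta\xi_R}\ge-\frac{n}{2R^2}F_R(s)$. The measure $\xi_R\,dx$ is a probability measure and $r\mapsto r^p$ is convex on $[0,\infty)$, so Jensen's inequality gives $\ip{u(s)^p}{\xi_R}\ge F_R(s)^p$. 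Multiplying these two pointwise-in-$s$ bounds by $\psi(t-s)\ge0$ and integrating proves \eqref{eq:FR-ineq}. All pairings are finite on compact subintervals because $u\in L^1\cap L^\infty$ there.

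For the second part, I would use a continuity (bootstrap) argument. First I need $F_R$ to be continuous on $[0,T_{\max})$. This holds because $u\in C([0,T_{\max});L^1)$ and $\xi_R\in L^\infty$. Under \eqref{eq:gaussian-threshold}, whenever $F_R(s)\ge F_{R,0}$ we have
\[
\frac{n}{2R^2}F_R(s)\le\tfrac12F_{R,0}^{p-1}F_R(s)\le\tfrac12F_R(s)^p,
\]
so the integrand in \eqref{eq:FR-ineq} is at least $\frac12\psi(t-s)F_R(s)^p\ge0$.

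The subtle point is the start of the bootstrap. Showing that $\{t:F_R\ge F_{R,0}\text{ on }[0,t]\}$ is relatively open is not immediate at the boundary point, since strict inequality may fail there. I would instead argue through a strict-threshold perturbation, or directly as follows. Let $t^*=\sup\{t<T_{\max}:F_R(s)\ge F_{R,0}\text{ on }[0,t]\}$ and suppose $t^*<T_{\max}$.

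The case $F_{R,0}=0$ is trivial, since $F_R\ge0$ by positivity. So assume $F_{R,0}>0$. By continuity, $F_R$ is positive on some $[0,t^*+\delta]$. Write $g(s)=F_R(s)^p-\frac{n}{2R^2}F_R(s)$. This $g$ is continuous, and $g(s)\ge\frac12F_R(s)^p>0$ on $[0,t^*]$. In fact $g(s)>0$ at every point with $F_R(s)^{p-1}>\frac{n}{2R^2}$, and this holds on a neighbourhood of $[0,t^*]$ because $F_{R,0}^{p-1}\ge n/R^2>\frac{n}{2R^2}$. The factor $2$ in \eqref{eq:gaussian-threshold} gives exactly this margin, so there is $\delta>0$ with $g>0$ on $[0,t^*+\delta]$. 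Then \eqref{eq:FR-ineq} together with $\psi\ge0$ gives $F_R(t)\ge F_{R,0}$ on $[0,t^*+\delta]$, contradicting the definition of $t^*$. Hence $F_R\ge F_{R,0}$ on $[0,T_{\max})$. The pointwise bound $g(s)\ge\frac12F_R(s)^p$ then holds for all $s$, and inserting it into \eqref{eq:FR-ineq} gives \eqref{eq:FR-pure}.

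The main thing to check is the continuity of $F_R$ together with this strict-margin step. The Jensen and Laplacian estimates are routine.
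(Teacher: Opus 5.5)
Your proof is correct and follows the paper's route. You test the weak Volterra identity with $\xi_R$, use the lower bound for $\Delta\xi_R$ and Jensen to get \eqref{eq:FR-ineq}, and then run a continuity argument in which the factor $2$ in \eqref{eq:gaussian-threshold} keeps the integrand positive slightly beyond any time up to which $F_R\ge F_{R,0}$ holds. The only difference is in closing the bootstrap: the paper uses this margin only near $t=0$ and then excludes a later first contact by invoking $\psi>0$ almost everywhere to force a strict inequality, whereas you reapply the margin at $t^*$ itself, which needs only $\psi\ge0$.
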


\begin{proof}
Use $v=\xi_R$ in Proposition~\ref{prop:weak-identity}.
Equation \eqref{eq:gaussian-laplacian} and Jensen's inequality for the probability density $\xi_R$ give
\[
\ip{u}{\Delta\xi_R}\ge-\frac{n}{2R^2}F_R,
\qquad
\int u^p\xi_Rd x\ge F_R^p,
\]
which proves \eqref{eq:FR-ineq}.
Since \(u\in C([0,T];L^1)\) and \(\xi_R\in L^\infty\), the function \(F_R\) is continuous.
And thus, there exists \(\delta>0\) such that
\[
F_R(t)\ge\vartheta F_{R,0},
\qquad 0\le t\le\delta.
\]
Choosing $2^{-1/(p-1)}<\vartheta<1$, then on this interval,
\[
F_R(t)^{p-1}
\ge
\vartheta^{p-1}F_{R,0}^{p-1}
>
\frac{n}{2R^2},
\]
so the integrand in \eqref{eq:FR-ineq} is nonnegative.
It follows that \(F_R(t)\ge F_{R,0}\) on \([0,\delta]\).

Suppose that a first later contact \(t_*>0\) with \(F_R(t_*)=F_{R,0}\) exists.
Then \(F_R(s)>F_{R,0}\) for \(0<s<t_*\), and
\[
F_R(s)^p-\frac{n}{2R^2}F_R(s)
\ge
\frac12F_R(s)^p>0.
\]
Since \(\psi(t_*-s)>0\) for almost every \(s\in(0,t_*)\), equation~\eqref{eq:FR-ineq} gives \(F_R(t_*)>F_{R,0}\), a contradiction.
Therefore \(F_R(t)\ge F_{R,0}\) throughout the interval of existence.

\end{proof}

Condition~\eqref{eq:gaussian-threshold} compares the nonlinear amplification \(F_{R,0}^{p-1}\) with the diffusive leakage \(R^{-2}\).
Once this condition holds, the Gaussian moment dominates a scalar nonlinear Volterra equation.
The spatial scale \(R\) determines the Fujita balance, whereas the temporal memory enters the resulting lifespan bound only through the intrinsic clock \(K\).

\begin{theorem}
\label{thm:gaussian-blowup}
Assume Assumption~\ref{ass:admissible}.
Fix $\beta>\max\{\alpha_1,\alpha_2\}$ and let $c_\beta$ be given by Lemma~\ref{lem:K-scaling}.
If \eqref{eq:gaussian-threshold} holds for some $R>0$, then $T_{\max}<\infty$ and
\begin{equation}
\label{eq:gaussian-lifespan-upper}
K(T_{\max})F_{R,0}^{p-1}
\le \frac{2^{1+\beta p/(p-1)}}{c_\beta}.
\end{equation}
Equivalently,
\begin{equation*}
T_{\max}\le
K^{-1}\!\left(C_{p,\beta}F_{R,0}^{1-p}\right),\qquad C_{p,\beta} = 2^{1+\beta p/(p-1)}/c_\beta.
\end{equation*}
\end{theorem}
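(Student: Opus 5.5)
The plan is to combine Proposition~\ref{prop:gaussian-ineq} with the scalar obstruction of Lemma~\ref{lem:scalar-obstruction}, applied with $k=\psi$, $a=1/2$, $y=F_R$, $y_0=F_{R,0}$.

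\emph{Hypotheses of the lemma.} By Proposition~\ref{prop:positive-pair}, $\psi\ge0$ is locally integrable. Lemma~\ref{lem:K-scaling} provides the weak lower scaling of $K$ with the fixed $\beta>\max\{\alpha_1,\alpha_2\}$ and its constant $c_\beta$. Under \eqref{eq:gaussian-threshold}, Proposition~\ref{prop:gaussian-ineq} gives \eqref{eq:FR-pure} on $(0,T_{\max})$ and $F_R\ge F_{R,0}$. The initial data are nonzero, since \eqref{eq:gaussian-threshold} forces $F_{R,0}>0$. Hence $F_R$ is positive. It is also continuous on $[0,T]$ for every $T<T_{\max}$, because $u\in C([0,T];L^1)$ and $\xi_R\in L^\infty$. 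It is finite there as well, since $|F_R(t)|\le\|\xi_R\|_\infty\|u(t)\|_1$. At $t=0$ the inequality \eqref{eq:FR-pure} holds trivially.

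\emph{Applying the lemma.} For every $T<T_{\max}$, Lemma~\ref{lem:scalar-obstruction} yields
\[
\tfrac12 c_\beta K(T)F_{R,0}^{p-1}\le 2^{\beta p/(p-1)},
\qquad\text{that is,}\qquad
K(T)F_{R,0}^{p-1}\le \frac{2^{1+\beta p/(p-1)}}{c_\beta}.
\]

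\emph{Finite lifespan.} Suppose $T_{\max}=\infty$. By \eqref{eq:K-asymp}, $K(T)\to\infty$ as $T\to\infty$. This contradicts the uniform bound above, since $F_{R,0}>0$. Therefore $T_{\max}<\infty$.

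\emph{Lifespan bound.} Since $K$ is continuous and increasing, letting $T\uparrow T_{\max}$ gives \eqref{eq:gaussian-lifespan-upper}. Because $K$ is a strictly increasing bijection of $[0,\infty)$, applying $K^{-1}$ gives the equivalent form $T_{\max}\le K^{-1}(C_{p,\beta}F_{R,0}^{1-p})$.

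\emph{Main obstacle.} The only delicate point is the application of the lemma on $[0,T]$ with $T<T_{\max}$, rather than at $T_{\max}$ itself, where $F_R$ may cease to be finite. This is handled by the monotone limit $T\uparrow T_{\max}$ in the final step. The rest is bookkeeping of constants, in particular tracking $a=1/2$ into the factor $2^{1+\beta p/(p-1)}$.
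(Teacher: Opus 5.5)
Your proposal is correct and matches the paper's proof: you apply Lemma~\ref{lem:scalar-obstruction} to \eqref{eq:FR-pure} with $k=\psi$ and $a=1/2$ on each $[0,T]$ with $T<T_{\max}$, use $K(T)\to\infty$ to rule out global existence, and let $T\uparrow T_{\max}$ by continuity of $K$. Your explicit checks that $F_R$ is positive, finite and continuous on $[0,T]$, which the lemma requires, are details the paper leaves implicit.
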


\begin{proof}
For every $T<T_{\max}$, apply Lemma~\ref{lem:scalar-obstruction} to \eqref{eq:FR-pure} with $k=\psi$ and $a=1/2$.
This gives \eqref{eq:gaussian-lifespan-upper} with $T$ in place of $T_{\max}$.
If the solution were global, $K(T)\to\infty$ would contradict that bound.
Hence $T_{\max}<\infty$, and letting $T\uparrow T_{\max}$ proves the stated estimate.
\end{proof}
The estimate separates the spatial and temporal mechanisms.
The condition \(R^2F_{R,0}^{p-1}\ge n\) measures the concentration of the initial condition relative to diffusion, while
\[
K(T_{\max})
\le
C_{p,\beta}F_{R,0}^{1-p}
\]
contains the complete temporal-memory dependence.
For a constant-order Caputo derivative,
\[
K_\alpha^{-1}(y) = \bigl(\Gamma(1+\alpha)y\bigr)^{1/\alpha},
\]
and the estimate reduces to the familiar power-law upper bound.
For the Scarpi transition, the inverse \(K^{-1}\) retains the non-power-law crossover.
The spatial scale \(R\) can now be optimized.
For nontrivial \(u_0\in L^1_+(\RR^n)\),
\[
F_{R,0}
\sim
(4\pi)^{-n/2}\|u_0\|_1R^{-n},
\qquad \text{ as } R\to\infty.
\]
Consequently, the Gaussian threshold is governed by \(R^{2-n(p-1)}\).

\begin{corollary}
\label{cor:blowup-consequences}
Let $u_0\in X$ be nonnegative and nontrivial.
\begin{enumerate}[label=(\roman*)]
\item If $1<p<1+2/n$, then $T_{\max}<\infty$.
\item If $p=1+2/n$ and $\norm{u_0}_1>(4\pi n)^{n/2}$, then $T_{\max}<\infty$.
This mass threshold is sufficient and is not asserted to be sharp.
\item For every $p>1$ and every nonzero $0\le\varphi\in X$, the datum $u_0=A\varphi$ blows up for all sufficiently large $A$.
\end{enumerate}
\end{corollary}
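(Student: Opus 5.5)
All three statements follow from Theorem~\ref{thm:gaussian-blowup}. In each case it suffices to exhibit some $R>0$ for which the Gaussian threshold \eqref{eq:gaussian-threshold}, $R^2F_{R,0}^{p-1}\ge n$, holds. The key input is the large-$R$ behaviour of $F_{R,0}=\int u_0\xi_R\,dx$. Since $R^n\xi_R(x)=(4\pi)^{-n/2}\exp(-|x|^2/(4R^2))$ increases pointwise to $(4\pi)^{-n/2}$ as $R\to\infty$, monotone convergence gives
\[
R^nF_{R,0}\uparrow(4\pi)^{-n/2}\norm{u_0}_1 .
\]
The limit is finite and positive for nontrivial $u_0\in L^1_+$. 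Consequently
\[
R^2F_{R,0}^{p-1}=R^{2-n(p-1)}\bigl(R^nF_{R,0}\bigr)^{p-1},
\]
where the second factor converges to $\bigl((4\pi)^{-n/2}\norm{u_0}_1\bigr)^{p-1}>0$.

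For (i), $p<1+2/n$ means $2-n(p-1)>0$, so the product tends to $+\infty$ and exceeds $n$ for all large $R$. Theorem~\ref{thm:gaussian-blowup} then gives $T_{\max}<\infty$.

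For (ii), $p-1=2/n$ makes the exponent $2-n(p-1)$ vanish, so
\[
R^2F_{R,0}^{p-1}=\bigl(R^nF_{R,0}\bigr)^{2/n}\to(4\pi)^{-1}\norm{u_0}_1^{2/n}.
\]
This limit exceeds $n$ exactly when $\norm{u_0}_1>(4\pi n)^{n/2}$. Because the limit is approached monotonically from below, the strict inequality yields a finite $R$ with $R^2F_{R,0}^{p-1}>n$. This is the only point needing care: with equality in the mass condition the monotone limit would not be attained, which is why the hypothesis is strict.

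For (iii), fix any $R>0$, say $R=1$. Since $\varphi\ge0$ is nonzero and $\xi_1>0$ everywhere, we have $\int\varphi\xi_1\,dx>0$. The moment is linear in the data, $F_{1,0}=A\int\varphi\xi_1\,dx$, so $F_{1,0}^{p-1}\to\infty$ as $A\to\infty$ and \eqref{eq:gaussian-threshold} holds for all $A$ beyond an explicit threshold, namely
\[
A^{p-1}\Bigl(\int\varphi\xi_1\,dx\Bigr)^{p-1}\ge n .
\]
Theorem~\ref{thm:gaussian-blowup} again applies.

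There is no real obstacle. The only step requiring attention is the boundary case (ii), where the monotone convergence and the strictness of the mass inequality must be used to reach the threshold at a finite $R$.
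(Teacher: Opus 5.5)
Your proposal is correct and follows essentially the same route as the paper. You reduce each case to the Gaussian threshold \eqref{eq:gaussian-threshold} via $R^nF_{R,0}\to(4\pi)^{-n/2}\norm{u_0}_1$ (by monotone rather than dominated convergence), invoke Theorem~\ref{thm:gaussian-blowup}, and handle (iii) by fixing $R$ and taking $A$ large. One small point: the monotonicity you stress in (ii) is not needed, since convergence to a limit strictly greater than $n$ already gives a finite admissible $R$.
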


\begin{proof}
Dominated convergence gives
\begin{equation}
\label{eq:FR-asymp-R}
R^nF_{R,0}\longrightarrow
(4\pi)^{-n/2}\norm{u_0}_1
\quad \text{ as } R\to\infty.
\end{equation}
Thus $R^2F_{R,0}^{p-1}$ grows like $R^{2-n(p-1)}$.
It tends to infinity in the subcritical range, proving (i).
At $p=1+2/n$, its limit is $\norm{u_0}_1^{2/n}/(4\pi)$, which proves (ii).
{For (iii), fix any $R$ for which $\int\varphi\xi_Rd x>0$.
Multiplication by a sufficiently large $A$ then ensures \eqref{eq:gaussian-threshold}.}
\end{proof}

\begin{remark}[Comparison with the constant-order Fujita problem]
The subcritical range agrees with the classical Fujita problem and the whole-space Caputo results \cite{Fujita1966,ZhangSun2015,cortazar2024semilinear}.
Here it follows from the balance between the $R^{-2}$ diffusion loss and the $R^{-n}$ decay of the initial Gaussian moment.
For Scarpi memory, the same spatial balance is coupled to the lifespan scale determined by $K^{-1}$.
\end{remark}

\section{Lifespan bounds and memory scales}
\label{sec:lifespan}

The preceding section determines whether blow-up must occur and gives an upper bound for the maximal existence time.
We now quantify how this lifespan depends on the amplitude of the initial datum.
The lower bound comes from the local contraction argument, while the upper bound comes from the Gaussian blow-up criterion.
Both bounds are naturally expressed in the intrinsic memory clock \(K\).

For a constant-order Caputo derivative of order \(\alpha\),
\[
K_\alpha(t)=\frac{t^\alpha}{\Gamma(1+\alpha)},
\qquad
K_\alpha^{-1}(y)
=
\bigl(\Gamma(1+\alpha)y\bigr)^{1/\alpha}.
\]
For the Scarpi transition, regular-variation inversion gives
\[
K^{-1}(y)
\sim
\bigl(\Gamma(1+\alpha_1)y\bigr)^{1/\alpha_1}
\quad(y\downarrow0),
\]
and
\[
K^{-1}(y)
\sim
\bigl(\Gamma(1+\alpha_2)y\bigr)^{1/\alpha_2}
\quad(y\to\infty).
\]
Thus a lifespan estimate whose intrinsic argument tends to zero selects the initial order \(\alpha_1\), while one whose argument tends to infinity selects the long-time order \(\alpha_2\).
Arguments comparable with \(K(c^{-1})\) remain in the non-power transition regime and retain the full dependence on \((\alpha_1,\alpha_2,c)\).

Let
\[
u_0=A\varphi,\qquad
0\le\varphi\in X,\quad\varphi\not\equiv0,
\]
and denote the maximal lifespan by \(T_A\).

\begin{proposition}
\label{prop:lower-lifespan}
There is $c_0>0$, depending only on $p$ and $\norm{\varphi}_X$, such that
\begin{equation}
\label{eq:lower-intrinsic}
T_A\ge K^{-1}(c_0A^{1-p})
\quad \text{ for } A>0.
\end{equation}
Consequently,
\begin{equation}
\label{eq:lower-powers}
T_A\gtrsim A^{-(p-1)/\alpha_1}
\quad(A\to\infty),
\qquad
T_A\gtrsim A^{-(p-1)/\alpha_2}
\quad(A\downarrow0).
\end{equation}
\end{proposition}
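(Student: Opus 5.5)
The plan is to make the fixed-point argument in the proof of Theorem~\ref{thm:wellposedness} quantitative in the amplitude $A$, and then to feed the existence time into the continuation criterion. Set $M=\norm{\varphi}_X$, so that $\norm{u_0}_X=AM$. Following that proof, with the specific choice $\rho=AM$ and hence $R=2AM$, we work on the closed ball in $\mathcal X_T$ centred at $S_{\Aa}(\cdot)u_0$ with radius $\rho$. The bounds in \eqref{eq:positive-resolvent-bounds} and the Lipschitz bound \eqref{eq:reaction-lipschitz}, together with $\norm{(v_+)^p}_X\le R^{p-1}\norm{v}_X\le R^p$ on this ball, give
\[
\norm{\mathcal Tv-S_{\Aa}(\cdot)u_0}_{\mathcal X_T}\le R^pK(T),
\qquad
\norm{\mathcal Tv-\mathcal Tw}_{\mathcal X_T}\le pR^{p-1}K(T)\norm{v-w}_{\mathcal X_T}.
\]
Invariance of the ball needs $(2AM)^pK(T)\le AM$, and contraction needs $p(2AM)^{p-1}K(T)<1$. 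Both conditions hold whenever
\[
K(T)\le c_0A^{1-p},\qquad c_0=\tfrac12\min\{2^{-p},\,p^{-1}2^{1-p}\}M^{1-p},
\]
and this $c_0$ depends only on $p$ and $M$.

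Since $K$ is a continuous increasing bijection of $[0,\infty)$, the admissible time $T_0=K^{-1}(c_0A^{1-p})$ is well defined. The unique mild solution therefore exists on $[0,T_0]$ with $\norm{u}_{\mathcal X_{T_0}}\le 2AM<\infty$.

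To pass from this to $T_A\ge T_0$, we use uniqueness. The contraction solution coincides with the maximal solution on $[0,T_0]$. If $T_A\le T_0$, the maximal solution would be essentially bounded in $L^\infty$ up to $T_A$, which contradicts the alternative \eqref{eq:blowup-alternative} of Proposition~\ref{prop:mass-alternative}. Hence $T_A>T_0$, and if anything only a strict-versus-nonstrict adjustment is needed here, which gives \eqref{eq:lower-intrinsic}.

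The power bounds \eqref{eq:lower-powers} then follow from the regular-variation asymptotics of $K^{-1}$ stated just before the proposition. As $A\to\infty$, the argument $c_0A^{1-p}$ tends to $0$, so $K^{-1}(c_0A^{1-p})\sim(\Gamma(1+\alpha_1)c_0)^{1/\alpha_1}A^{-(p-1)/\alpha_1}$. As $A\downarrow0$, the argument tends to $\infty$, and the $\alpha_2$ asymptotics give $A^{-(p-1)/\alpha_2}$. Alternatively, one can avoid the asymptotics and use \eqref{eq:K-asymp} directly as two-sided comparisons near $0$ and near $\infty$.

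The main obstacle is bookkeeping rather than analysis. The constants in the mixed-space contraction must be genuinely independent of $T$. This holds because \eqref{eq:positive-resolvent-bounds} gives $\int_0^T\norm{P_{\Aa}(r)}\,dr\le K(T)$ exactly, with no $T$-dependent constant $C_T$ of the kind that appears in Proposition~\ref{prop:linear-families}. That is why the positive-resolvent structure of Proposition~\ref{prop:positive-pair} is used here, and not the abstract sectorial estimates.
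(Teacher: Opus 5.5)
Your proposal is correct and follows the paper's proof. Both run the contraction from Theorem~\ref{thm:wellposedness} quantitatively on a ball of $X$-radius $2A\norm{\varphi}_X$, closed by a condition of the form $C_p(A\norm{\varphi}_X)^{p-1}K(T)<1$ (your explicit $K(T)\le c_0A^{1-p}$), and then invert $K$ using regular variation at $0$ and at $\infty$. Your appeal to uniqueness and the $L^\infty$ blow-up alternative of Proposition~\ref{prop:mass-alternative}, used to pass from existence on $[0,K^{-1}(c_0A^{1-p})]$ to the bound on $T_A$, spells out a step the paper leaves implicit.
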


\begin{proof}
Apply the contraction argument of Theorem~\ref{thm:wellposedness} on the ball
\[
\sup_{0\le t\le T}\norm{u(t)}_X
\le 2A\norm{\varphi}_X.
\]
The map is invariant and contractive whenever $C_p(A\norm{\varphi}_X)^{p-1}K(T)<1$.
{This proves \eqref{eq:lower-intrinsic}.
The bounds in \eqref{eq:lower-powers} follow from \eqref{eq:K-asymp} and inversion of regularly varying functions.}
\end{proof}

\begin{theorem}
\label{thm:large-amplitude}
For every nonzero $0\le\varphi\in X$ and every $p>1$, there are constants $A_0,C_1,C_2>0$ such that
\begin{equation}
\label{eq:matching-intrinsic}
K^{-1}(C_1A^{1-p})
\le T_A\le
K^{-1}(C_2A^{1-p}),
\qquad A\ge A_0.
\end{equation}
In particular,
\begin{equation}
\label{eq:matching-power}
T_A\asymp A^{-(p-1)/\alpha_1},
\quad \text{ as } A\to\infty.
\end{equation}
\end{theorem}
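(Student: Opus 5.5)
The lower bound in \eqref{eq:matching-intrinsic} is immediate from Proposition~\ref{prop:lower-lifespan}, with $C_1=c_0$, valid for every $A>0$. The work is therefore in the upper bound, which I would obtain from Theorem~\ref{thm:gaussian-blowup} at a fixed spatial scale. Since $\varphi\ge0$ is nonzero, $F_R[\varphi]:=\int\varphi\xi_R\,dx>0$ for every $R>0$. The reason is that $\xi_R$ is strictly positive, so the integral vanishes only if $\varphi=0$ a.e. I would fix one such $R$, say $R=1$, and write $f_\varphi=F_1[\varphi]>0$. For $u_0=A\varphi$ we then have $F_{1,0}=Af_\varphi$.

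The threshold \eqref{eq:gaussian-threshold} becomes $(Af_\varphi)^{p-1}\ge n$. This holds for $A\ge A_0:=n^{1/(p-1)}f_\varphi^{-1}$, which fixes $A_0$. For such $A$, Theorem~\ref{thm:gaussian-blowup} gives
\[
T_A\le K^{-1}\bigl(C_{p,\beta}(Af_\varphi)^{1-p}\bigr)=K^{-1}\bigl(C_2A^{1-p}\bigr),
\qquad C_2=C_{p,\beta}f_\varphi^{1-p},
\]
with any fixed $\beta>\max\{\alpha_1,\alpha_2\}$. Together with the lower bound, this proves \eqref{eq:matching-intrinsic}. Note that $K^{-1}$ is well defined and increasing, because $K$ is a continuous increasing bijection of $[0,\infty)$.

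For \eqref{eq:matching-power}, both arguments $C_jA^{1-p}$ tend to $0$ as $A\to\infty$, so only the small-argument behavior of $K^{-1}$ matters. From \eqref{eq:K-asymp}, $K(t)\sim t^{\alpha_1}/\Gamma(1+\alpha_1)$ as $t\downarrow0$. Inverting regularly varying functions, or simply using monotonicity with two-sided bounds $\tfrac12 t^{\alpha_1}/\Gamma(1+\alpha_1)\le K(t)\le 2t^{\alpha_1}/\Gamma(1+\alpha_1)$ on a small interval $[0,t_0]$, gives $K^{-1}(y)\asymp y^{1/\alpha_1}$ for $y$ small. Substituting $y=C_jA^{1-p}$ yields $T_A\asymp A^{-(p-1)/\alpha_1}$, with implied constants $(C_j\Gamma(1+\alpha_1)2^{\pm1})^{1/\alpha_1}$, after enlarging $A_0$ so that both arguments lie below $K(t_0)$.

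I expect no genuine obstacle. The only points needing care are that the upper-bound constant may depend on $\varphi$ through $f_\varphi$, which the statement allows, and that the inversion of the asymptotics in \eqref{eq:K-asymp} is uniform in the small-argument regime, which the monotone two-sided bound above makes elementary.
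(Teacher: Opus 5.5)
Your proposal is correct and follows the paper's route. The lower bound is read off from Proposition~\ref{prop:lower-lifespan}. The upper bound comes from Theorem~\ref{thm:gaussian-blowup} at a fixed Gaussian scale with $F_{R,0}=A\int\varphi\xi_R\,dx$, and the power law follows by inverting the short-time asymptotics \eqref{eq:K-asymp}; your concrete choices ($R=1$, $A_0=n^{1/(p-1)}/f_\varphi$, and the two-sided bound on $K$ near zero) only make explicit steps the paper leaves implicit.
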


\begin{proof}
The lower bound is Proposition~\ref{prop:lower-lifespan}.
Choose $R>0$ with $m_R=\int\varphi\xi_Rd x>0$.
For $A$ sufficiently large, $(Am_R)^{p-1}\ge n/R^2$, so Theorem~\ref{thm:gaussian-blowup} gives
\[
T_A\le K^{-1}\!\left(C_{p,\beta}m_R^{1-p}A^{1-p}\right).
\]
This proves \eqref{eq:matching-intrinsic}.
Since its inverse-function arguments tend to zero as $A\to\infty$, the short-time asymptotic in \eqref{eq:K-asymp} gives \eqref{eq:matching-power}.
\end{proof}

The intrinsic two-sided estimate \eqref{eq:matching-intrinsic} contains the full Scarpi transition.
The power $A^{-(p-1)/\alpha_1}$ agrees with the reaction scale of the constant-order Caputo equation \cite{FengLiLiuXu2018}.
In the large-amplitude limit, \(A^{1-p}\to0\), so both inverse-\(K\) arguments probe the short-time regime of the memory clock.
Consequently, the leading power is governed by the initial order \(\alpha_1\).
The long-time order and transition rate do not enter this exponent.

The subcritical small-amplitude upper bound uses a Gaussian radius that grows as the amplitude decreases.

\begin{theorem}
\label{thm:small-upper}
Let $1<p<1+2/n$ and $d=2-n(p-1)>0$.
For $u_0=A_\varepsilon\varphi$ with $0\le\varphi\in X$, $\varphi\not\equiv0$, there are $A_{\varepsilon_0},C>0$ such that
\begin{equation}
\label{eq:small-upper-intrinsic}
T_\varepsilon\le
K^{-1}\!\left(C A_\varepsilon^{-2(p-1)/d}\right),
\qquad 0<A_\varepsilon\le A_{\varepsilon_0}.
\end{equation}
Hence
\begin{equation}
\label{eq:small-upper-power}
T_\varepsilon\lesssim
A_\varepsilon^{-\frac{1}{\alpha_2(1/(p-1)-n/2)}}
\qquad(A_\varepsilon\downarrow0).
\end{equation}
\end{theorem}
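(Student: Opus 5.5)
We will apply Theorem~\ref{thm:gaussian-blowup} with a Gaussian radius $R=R_\varepsilon$ that grows as $A_\varepsilon\downarrow0$. The radius is fixed by the threshold \eqref{eq:gaussian-threshold}.

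The first step is to use \eqref{eq:FR-asymp-R} for $\varphi$. There is $R_0>0$ such that
\[
\int\varphi\,\xi_R\,dx\ge \tfrac12(4\pi)^{-n/2}\norm{\varphi}_1R^{-n}=:m_\varphi R^{-n}
\qquad\text{for all }R\ge R_0 .
\]
Hence, for $u_0=A_\varepsilon\varphi$, we have $F_{R,0}\ge A_\varepsilon m_\varphi R^{-n}$ when $R\ge R_0$. The threshold \eqref{eq:gaussian-threshold} holds as soon as
\[
(A_\varepsilon m_\varphi)^{p-1}R^{-n(p-1)}\ge nR^{-2},
\quad\text{that is,}\quad
R^{d}\ge n(A_\varepsilon m_\varphi)^{1-p}.
\]
We choose
\[
R_\varepsilon=\bigl(n(A_\varepsilon m_\varphi)^{1-p}\bigr)^{1/d}.
\]
Since $d>0$, we have $R_\varepsilon\to\infty$ as $A_\varepsilon\downarrow0$. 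We pick $A_{\varepsilon_0}$ so that $R_\varepsilon\ge R_0$ for all $0<A_\varepsilon\le A_{\varepsilon_0}$.

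The second step is to apply Theorem~\ref{thm:gaussian-blowup}. It gives
\[
T_\varepsilon\le K^{-1}\bigl(C_{p,\beta}F_{R_\varepsilon,0}^{1-p}\bigr).
\]
Because $K^{-1}$ is increasing and $F_{R_\varepsilon,0}^{p-1}\ge n R_\varepsilon^{-2}$, we get $F_{R_\varepsilon,0}^{1-p}\le R_\varepsilon^2/n$. By the choice of $R_\varepsilon$,
\[
R_\varepsilon^2=\bigl(n m_\varphi^{1-p}\bigr)^{2/d}A_\varepsilon^{-2(p-1)/d}.
\]
Therefore
\[
T_\varepsilon\le K^{-1}\bigl(C A_\varepsilon^{-2(p-1)/d}\bigr),
\qquad
C=C_{p,\beta}\,n^{-1}\bigl(nm_\varphi^{1-p}\bigr)^{2/d},
\]
which is \eqref{eq:small-upper-intrinsic}.

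The last step is the power law. The argument $y=CA_\varepsilon^{-2(p-1)/d}$ tends to infinity, so the long-time inversion $K^{-1}(y)\sim(\Gamma(1+\alpha_2)y)^{1/\alpha_2}$ from \eqref{eq:K-asymp} applies. It gives $T_\varepsilon\lesssim A_\varepsilon^{-2(p-1)/(d\alpha_2)}$. To match \eqref{eq:small-upper-power}, write
\[
\frac{2(p-1)}{d}=\frac{2(p-1)}{2-n(p-1)}=\frac{1}{1/(p-1)-n/2}.
\]

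The only delicate point is the uniform lower bound on $R^nF_{R,0}$ for large $R$, which \eqref{eq:FR-asymp-R} supplies directly. The rest is monotonicity of $K^{-1}$, so no real obstacle is expected.
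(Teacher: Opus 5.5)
Your proof is correct and follows the paper's argument: you pick a Gaussian radius $R_\varepsilon\sim A_\varepsilon^{-(p-1)/d}$ so that the threshold \eqref{eq:gaussian-threshold} holds, then apply Theorem~\ref{thm:gaussian-blowup} and the long-time inversion of $K$. The differences are cosmetic. You obtain $\int\varphi\,\xi_R\,dx\gtrsim R^{-n}$ from the limit \eqref{eq:FR-asymp-R}, where the paper uses an explicit half-mass radius and the factor $e^{-1/4}$. You also bound $F_{R_\varepsilon,0}^{1-p}\le R_\varepsilon^2/n$ directly from the threshold, rather than from the lower bound on $m_{R_\varepsilon}$.
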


\begin{proof}
Define
\[
R_0=\min\int_{|x|\le k}\varphi(x)\,d x\ge\frac{\norm{\varphi}_1}{2},\quad k = 1,2,\ldots.
\]
For $R\ge R_0$, the exponential factor in $\xi_R$ is at least $e^{-1/4}$ on $|x|\le R_0$, so we define
\[
m_R:=\int\varphi\xi_Rd x
\ge (4\pi)^{-n/2}R^{-n}e^{-1/4}
\int_{|x|\le R_0}\varphi(x)\,d x
\ge C_*\norm{\varphi}_1 R^{-n},
\qquad C_*=\frac{e^{-1/4}}{2(4\pi)^{n/2}}.
\]
Let \( L_*=(n/(C_*\norm{\varphi}_1)^{p-1})^{1/d} \) and $A_{\varepsilon_0}=\min\left\{1, (L_*/R_0)^{d/(p-1)}\right\}.$ For $0<A_\varepsilon\le A_{\varepsilon_0}$, by choosing $R_\varepsilon=L_*A_\varepsilon^{-(p-1)/d}$ we conclude that $R_\varepsilon\ge R_0.$ Then
\[
R_\varepsilon^2(A_\varepsilon m_{R_\varepsilon})^{p-1}
\ge C^{p-1}_*\norm{\varphi}_1^{p-1}L_*^d\ge n,
\]
so Theorem~\ref{thm:gaussian-blowup} applies.
Moreover,
\[
(A_\varepsilon m_{R_\varepsilon})^{1-p}
\le C A_\varepsilon^{1-p} R_\varepsilon^{n(p-1)}
=C A_\varepsilon^{-2(p-1)/d},
\]
which gives \eqref{eq:small-upper-intrinsic}.
Its argument tends to infinity, so the long-time asymptotic in \eqref{eq:K-asymp} yields \eqref{eq:small-upper-power}.
\end{proof}

\begin{remark}[Sharp and nonsharp parts]
Theorem~\ref{thm:large-amplitude} gives a matching order and identifies the initial Scarpi order $\alpha_1$ as the short-lifespan exponent.
{For $\varepsilon\downarrow0$ in the subcritical range, the bounds in Proposition \ref{prop:lower-lifespan} and Theorem~\ref{thm:small-upper} generally do not match.
They provide rigorous lower and upper bounds but do not establish a sharp asymptotic.}
The upper bound sees $\alpha_2$ because the relevant lifetime tends to infinity.
{When $T_A$ is comparable with $c^{-1}$, the correct statement is the inverse-$K$ bound.
We do not assert a third power law in the transition window.}
For $\alpha_1=\alpha_2=\alpha$, $K(t)=t^\alpha/\Gamma(1+\alpha)$ and all formulas reduce to their constant-order Caputo counterparts.
Formally setting $\alpha=1$ recovers the classical Kaplan time scales.
\end{remark}

\section{Numerical study of growth and memory effects}
\label{sec:numerics}

The numerical study examines how the two endpoint orders and the transition rate affect nonlinear growth and spatial profiles.
All nonlinear experiments are performed in one spatial dimension, with $1<p<3$, corresponding to the subcritical range of Corollary~\ref{cor:blowup-consequences}.
We focus on the growth dynamics within this parameter range and on the transition between the short- and long-time memory scales.

The variable-order parameters are selected from the numerically screened candidates in Table~\ref{tab:scarpi-candidates}.
We use $(\alpha_1,\alpha_2)=(0.6,0.8)$ and $c=1$ unless stated otherwise, with $c\in\{10^{-1/4},1,10\}$ in the transition-rate comparisons.
Each Scarpi--Caputo comparison uses the same initial datum and nonlinearity.

We first describe the numerical discretization and its verification, and then examine the memory transition and its effects on nonlinear growth.

\subsection{Fourier discretization and convolution quadrature}

We discretize the Volterra formulation \eqref{eq:abstract-volterra-form} with $\Aa=\Delta$ and $\Nn(u)=u^p$, using a Fourier approximation in space and fully implicit backward-Euler convolution quadrature (BE--CQ) in time.
The temporal approximation follows Lubich's fractional multistep construction \cite{Lubich1986} and the convolution quadrature framework \cite{Lubich1988I,Lubich1988II}, in which the weights are generated directly from the Laplace transform of the convolution kernel.
Its application to linear and semilinear fractional evolution equations is analyzed in \cite{CuestaLubichPalencia2006, WangZhou2020}.

The spatial approximation uses Fourier differentiation and projection on the periodic interval $(-l,l)$ \cite{ShenTangWang2011}, with
\[
 \Delta x=\frac{2l}{M},
 \qquad
 x_j=-l+j\Delta x,
 \qquad j=0,\ldots,M-1.
\]
The Fourier second derivative $D_{xx,M}$ has multiplier $-(\pi k/l)^2$.
The nonlinear term is approximated by
\[
 \mathcal N_M(V)=P_M[(I_{3M/2}V)_+^p],
\]
where $I_{3M/2}$ interpolates to the grid with $3M/2$ points and $P_M$ projects back to the retained Fourier modes.
The Nyquist coefficient is set to zero before interpolation and after projection.

For $t_m=mh$, $m=0,\ldots,N$, substituting the backward-Euler symbol $(1-\zeta)/h$ into $\Psi=\widehat\psi$ defines the convolution weights:
\[
 \Psi\!\left(\frac{1-\zeta}{h}\right)
 =\sum_{m\ge0}\omega_m\zeta^m.
\]
With $U_j^0=u_0(x_j)$, the fully discrete equation is
\begin{equation}
\label{eq:exact-CQ}
 U^m=U^0+\sum_{j=0}^m\omega_{m-j}
 \bigl[D_{xx,M}U^j+\mathcal N_M(U^j)\bigr],
 \qquad m=1,\ldots,N.
\end{equation}
The sum retains the initial contribution and the complete discrete history.
Both diffusion and reaction are implicit at the current time level.

Writing the known history as
\[
 H^m=U^0+\sum_{j=0}^{m-1}\omega_{m-j}
 \bigl[D_{xx,M}U^j+\mathcal N_M(U^j)\bigr],
\]
we solve
\[
 (I-\omega_0D_{xx,M})U^m
 -\omega_0\mathcal N_M(U^m)=H^m
\]
by a damped Newton iteration starting from $U^{m-1}$.
The derivative of the nonlinear term acts on a perturbation $W$ as
\[
 \mathcal N_M'(V)W
 =
 P_M\!\left[
 p(I_{3M/2}V)_+^{p-1}(I_{3M/2}W)
 \right],
\]
with pointwise multiplication on the enlarged grid.
Thus each Newton equation has the linearized operator
\[
 I-\omega_0D_{xx,M}-\omega_0\mathcal N_M'(V).
\]
We solve these equations by GMRES with right preconditioner $I-\omega_0D_{xx,M}$.
Its inverse has Fourier multiplier
\[
 \frac{1}{1+\omega_0(\pi k/l)^2}.
\]
The relative and absolute nonlinear residual tolerances are $2\times10^{-10}$ and $5\times10^{-12}$ in both the discrete $L^2$ and maximum norms.

The weights are evaluated by Cauchy's coefficient formula with FFT acceleration:
\[
 \omega_m\approx\frac{\rho^{-m}}{Q}\operatorname{Re}
 \sum_{q=0}^{Q-1}
 \Psi\!\left(\frac{1-\rho e^{2\pi iq/Q}}{h}\right)e^{-2\pi imq/Q},
 \qquad 0\le m\le N.
\]
We use $\rho=\exp(-2/N)$ and the smallest power of two $Q\ge\max\{128,16(N+1)\}$.
The coefficients are compared with those computed using $2Q$ nodes, and the latter are retained \cite{GarrappaGiusti2023}.
The principal logarithm is used in the Scarpi symbol.
The Caputo comparisons use the same discretization with $\Psi(z)=z^{-\alpha_1}$ or $\Psi(z)=z^{-\alpha_2}$.

For the long-time rescaling below, the same scheme is applied to $v_b(y,\sigma)$ using $\Psi_b$ and a uniform grid in $\sigma$.
A step $h$ in $\sigma$ corresponds to a step $bh$ in the original time $t=b\sigma$.
The large-amplitude computations instead use a uniform grid in $t$, followed by comparison at common rescaled times.

\subsection{Numerical verification and growth thresholds}

For a spatial grid function $V$, we use
\[
 \norm{V}_2
 =\left(\Delta x\sum_{j=0}^{M-1}|V_j|^2\right)^{1/2},
 \qquad
 \norm{V}_\infty=\max_{0\le j<M}|V_j|.
\]
These norms are evaluated over the full original computational interval, including when only part of a profile is displayed.
Scarpi--Caputo relative $L^2$ differences are normalized by the Caputo norm at the same time.
Refinement comparisons use the corresponding finer solution as reference.
Solutions are compared at common times by linear interpolation and at common spatial nodes.
Enlarged-domain solutions are restricted to the original interval.

For $\Lambda\in\{2,4,8\}$, let $\tau_\Lambda$ be the first upward crossing of $\Lambda\norm{U^0}_\infty$ by $\norm{U(t)}_\infty$.
The crossing time is obtained by linear interpolation of the logarithm of the peak, while spatial profiles are interpolated linearly in time.
These times compare prescribed relative growth levels and are distinct from the maximal analytical lifespan.
All integrated-memory comparisons use the continuous function $K$, denoted by $K_{\rm model}$ when several equations are compared.

The temporal approximation is first checked against an exact linear solution.
For Caputo order $\alpha=1/2$ and $u_0=\cos x$ on $(-\pi,\pi)$,
\[
 u(x,t)=\e^t\operatorname{erfc}(\sqrt t)\cos x.
\]
Using $M=64$ and $h=2^{-7},2^{-8},2^{-9},2^{-10}$, the $L^2$ error at $T=1$ decreases from $1.05\times10^{-3}$ to $1.18\times10^{-4}$.
For constant-order weights, comparison with the recurrence
\[
 \omega_0=h^\alpha,
 \qquad
 \omega_m=\frac{m-1+\alpha}{m}\omega_{m-1}
\]
gives a maximum discrepancy of $1.88\times10^{-14}$.

The nonlinear verification uses $p=2$ and $u_0=4\e^{-x^2}$ on $(-8,8)$ with $M=256$.
For the Scarpi equation and both Caputo equations, we compute on $0\le t\le0.02$ with $h=0.02\,2^{-k}$, $k=10,11,12$.
The largest relative $L^2$ difference between the two finest time discretizations is $1.93\times10^{-4}$, evaluated at common times and normalized by the finest-step solution.
A combined spatial and domain refinement additionally compares $(l,M)=(8,256)$ with $(16,1024)$ at the finest time step.

In the subsequent experiments, temporal refinement halves $h$, spatial refinement doubles $M$ at fixed $l$, and independent domain enlargement doubles $l$ and $M$ at fixed $\Delta x$.
The refinement criteria are relative threshold changes below $0.5\%$ and relative $L^2$ solution changes below $10^{-3}$.
For nearby Scarpi and Caputo solutions, refinement changes are also assessed relative to their difference, with a tolerance of $20\%$.

In the large-amplitude comparisons, the change in the Scarpi--Caputo solution difference is checked directly and normalized by the Caputo norm.
The larger of its temporal and spatial time maxima is combined with the domain contribution.
For the long-time rescaling, Table~\ref{tab:endpoint-terminal} reports the combined refinement changes of the two individual solutions.
Discrete mass and Gaussian-moment identities provide additional consistency checks corresponding to Propositions~\ref{prop:mass-alternative} and~\ref{prop:gaussian-ineq}.

\subsection{Memory and linear relaxation}

The integrated Scarpi memory has the two Caputo endpoint powers at short and long times, but its logarithmic slope can leave the interval between the endpoint orders.
We first examine this distinction for the memory kernel and linear relaxation amplitudes.

Figure~\ref{fig:endpoint-memory} compares the continuous $K$ with \eqref{eq:K-asymp} through
\[
 Q_{K,1}(t)=\frac{\Gamma(1+\alpha_1)K(t)}{t^{\alpha_1}},
 \qquad
 Q_{K,2}(t)=\frac{\Gamma(1+\alpha_2)K(t)}{t^{\alpha_2}}.
\]
The first ratio approaches one at short times and the second at long times.
The logarithmic slope
\[
 a_K(t)=\frac{d\log K(t)}{d\log t}
       =\frac{t\psi(t)}{K(t)}
\]
ranges from $0.4960$ to $0.8513$ over the sampled rates and times, crossing both ends of $[\alpha_1,\alpha_2]$ even though $\alpha(t)$ is monotone.
We evaluate $\psi$, $K$, and the relaxation functions by numerical Laplace inversion along a parabolic contour, following \cite{GarrappaGiustiMainardi2021} and the contour quadrature of \cite{WeidemanTrefethen2007}.
For each $c\in\{10^{-1/4},1,10\}$, the quantities are evaluated at 321 logarithmically spaced values in $10^{-4}\le ct\le10^5$.
The inversion is stable under refinement of the contour quadrature and reproduces the constant-order Caputo formulas to high accuracy.

For $\lambda>0$, let $R_S$ and $R_{C_\alpha}$ denote the Scarpi and Caputo relaxation amplitudes with initial value one.
The function $R_S$ is the time amplitude of a Fourier component of the linear heat equation, satisfying ${}^S D_0^{\alpha(t)}R_S=-\lambda R_S$, $R_S(0)=1$, with $\lambda=|\xi|^2$.
Then
\[
 \widehat R_S(z)=\frac1{z(1+\lambda\Psi(z))},
 \qquad
 R_{C_\alpha}(t)=E_\alpha(-\lambda t^\alpha),
\]
where $E_\alpha$ is the Mittag--Leffler function, which gives the constant-order Caputo relaxation solution \cite{Diethelm2010}.
The computed ratios
\[
 \frac{1-R_S(t)}{1-R_{C_{\alpha_1}}(t)},
 \qquad
 \frac{R_S(t)}{R_{C_{\alpha_2}}(t)}
\]
approach one at the short- and long-time ends, respectively, with nonmonotone departures from the Caputo references in between.
The short-time ratio compares the initial loss from one, whereas the long-time ratio compares the decaying amplitudes.
The quantity $1-R_S$ is obtained by direct inversion of its Laplace transform.
Figure~\ref{fig:endpoint-memory} uses $\lambda=1$.
Related kernel and relaxation comparisons for exponential Scarpi transitions were studied in \cite{GarrappaGiustiMainardi2021,GarrappaGiusti2023}.
These quantities are used to connect the two asymptotic memory scales in \eqref{eq:K-asymp} with the nonlinear growth regimes studied below.
The short-time ratio also tends to one as \(t\to\infty\), because both relaxation amplitudes vanish.
Likewise, the long-time ratio tends to one as \(t\downarrow0\), because both modes share the initial value one.
These opposite-end limits do not indicate agreement of the endpoint orders.

\begin{figure}[htbp]
\centering
\includegraphics[width=0.9\textwidth]{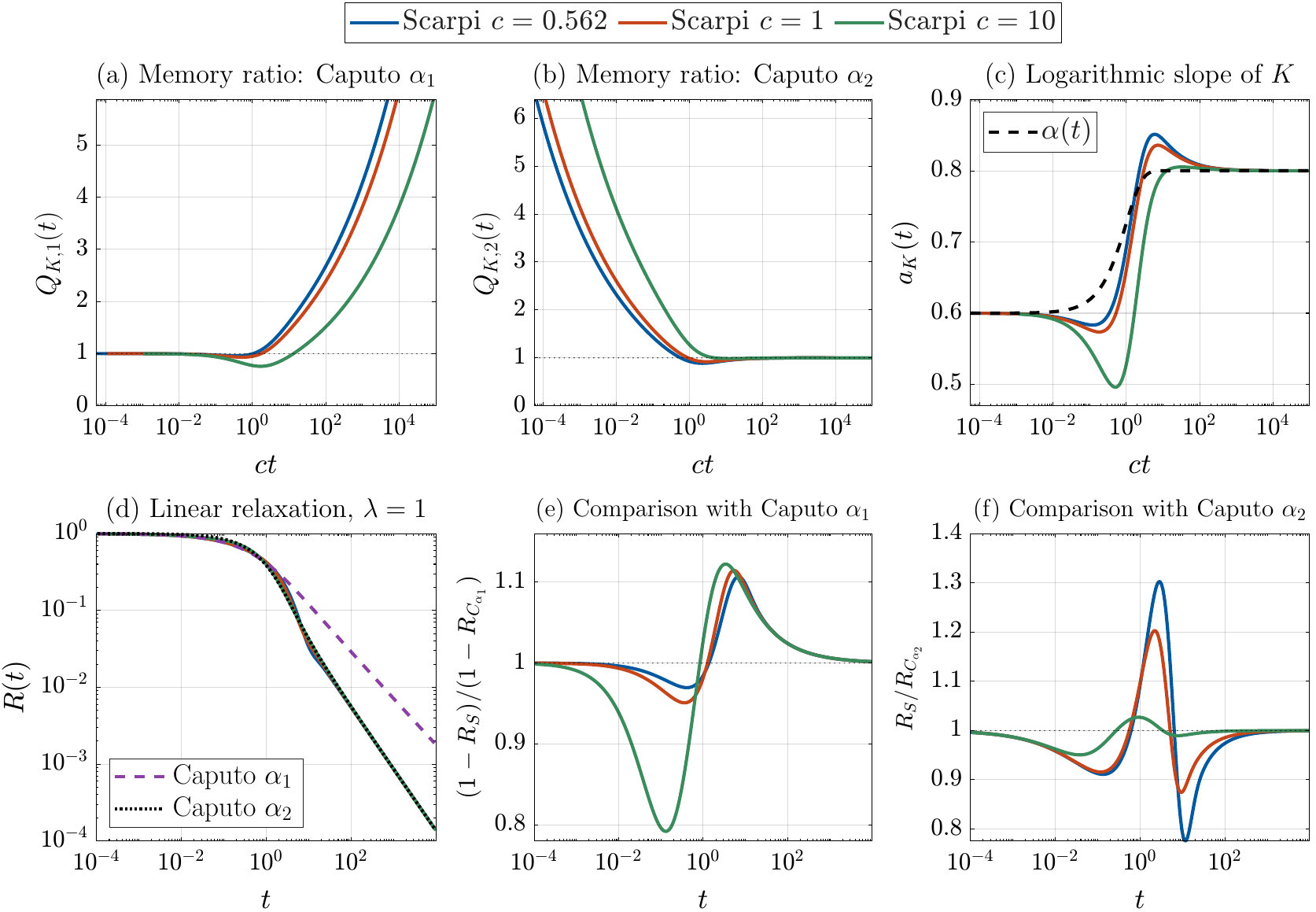}
\caption{Memory and linear relaxation for $(\alpha_1,\alpha_2)=(0.6,0.8)$ and $c\in\{10^{-1/4},1,10\}$.
Panels (a,b) show the integrated-memory ratios $Q_{K,1},Q_{K,2}$.
Panel (c) compares the effective memory slope $a_K$ with $\alpha(t)$.
Panels (d--f) show the relaxation modes and the two endpoint ratios for $\lambda=1$.
The upper panels use $ct$, and the lower panels use $t$.}
\label{fig:endpoint-memory}
\end{figure}

\subsection{Large-amplitude growth and the initial order}

Increasing the initial amplitude brings the computed Scarpi threshold times and rescaled solutions closer to those of Caputo $\alpha_1$.
We take $u_0=A\e^{-x^2}$, $p=2$ and $A=4(3/2)^j$, $j=0,\ldots,11$, with $(\alpha_1,\alpha_2,c)=(0.6,0.8,1)$.

Theorem~\ref{thm:large-amplitude} gives the lifespan scale $A^{-1/\alpha_1}$.
The same scale applies to fixed relative thresholds of the continuous solution under Assumption~\ref{ass:admissible}.
Indeed, let $\tau_\Lambda^{\rm cont}$ be the first time at which $\norm{u(t)}_\infty=\Lambda A$, with fixed $\Lambda>1$.
Before this time, the mild formula and \eqref{eq:positive-resolvent-bounds} give
\[
 \norm{u(t)}_\infty\le A+\Lambda^2A^2K(t).
\]
Continuity and Theorem~\ref{thm:large-amplitude} therefore yield, for large $A$,
\[
 \frac{\Lambda-1}{\Lambda^2}A^{-1}
 \le K(\tau_\Lambda^{\rm cont})
 \le K(T_A)\le C_2A^{-1}.
\]
Hence $\tau_\Lambda^{\rm cont}\asymp A^{-1/\alpha_1}$.
The numerical thresholds test this scale and compare the two operators at equal relative peak growth.

The computed Scarpi-to-Caputo $\alpha_1$ ratio of $\tau_4$ decreases from $1.0193580$ to $1.0000178$.
Figure~\ref{fig:endpoint-large} shows the threshold times, their adjacent-amplitude logarithmic slopes, this ratio and $AK_{\rm model}(\tau_4)$.
Multiplication by $A$ removes the $A^{-1}$ leading scale of $K_{\rm model}(\tau_4)$.
The slopes are the changes in $\log\tau_4$ divided by the changes in $\log A$, plotted at the geometric mean of each adjacent amplitude pair.
Table~\ref{tab:endpoint-fits} gives fits over several final amplitude ranges.
The fitted values approach $-5/3$ for Scarpi and Caputo $\alpha_1$, and toward $-5/4$ for Caputo $\alpha_2$, as the fit is restricted to larger amplitudes.

\begin{table}[htbp]
\centering
\small
\caption{Slopes of $\log\tau_4$ against $\log A$, fitted over the largest five, six or seven amplitudes.
The amplitude points are $A=4(3/2)^j$ for $p=2$.}
\begin{tabular}{lrrrr}
\toprule
Model & Largest 5 & Largest 6 & Largest 7 & Reference exponent\\
\midrule
Scarpi & -1.6934 & -1.7001 & -1.7086 & $-5/3$\\
Caputo $\alpha_1$ & -1.6933 & -1.6999 & -1.7083 & $-5/3$\\
Caputo $\alpha_2$ & -1.2684 & -1.2729 & -1.2791 & $-5/4$\\
\bottomrule
\end{tabular}
\label{tab:endpoint-fits}
\end{table}

\begin{figure}[htbp]
\centering
\includegraphics[width=0.85\textwidth]{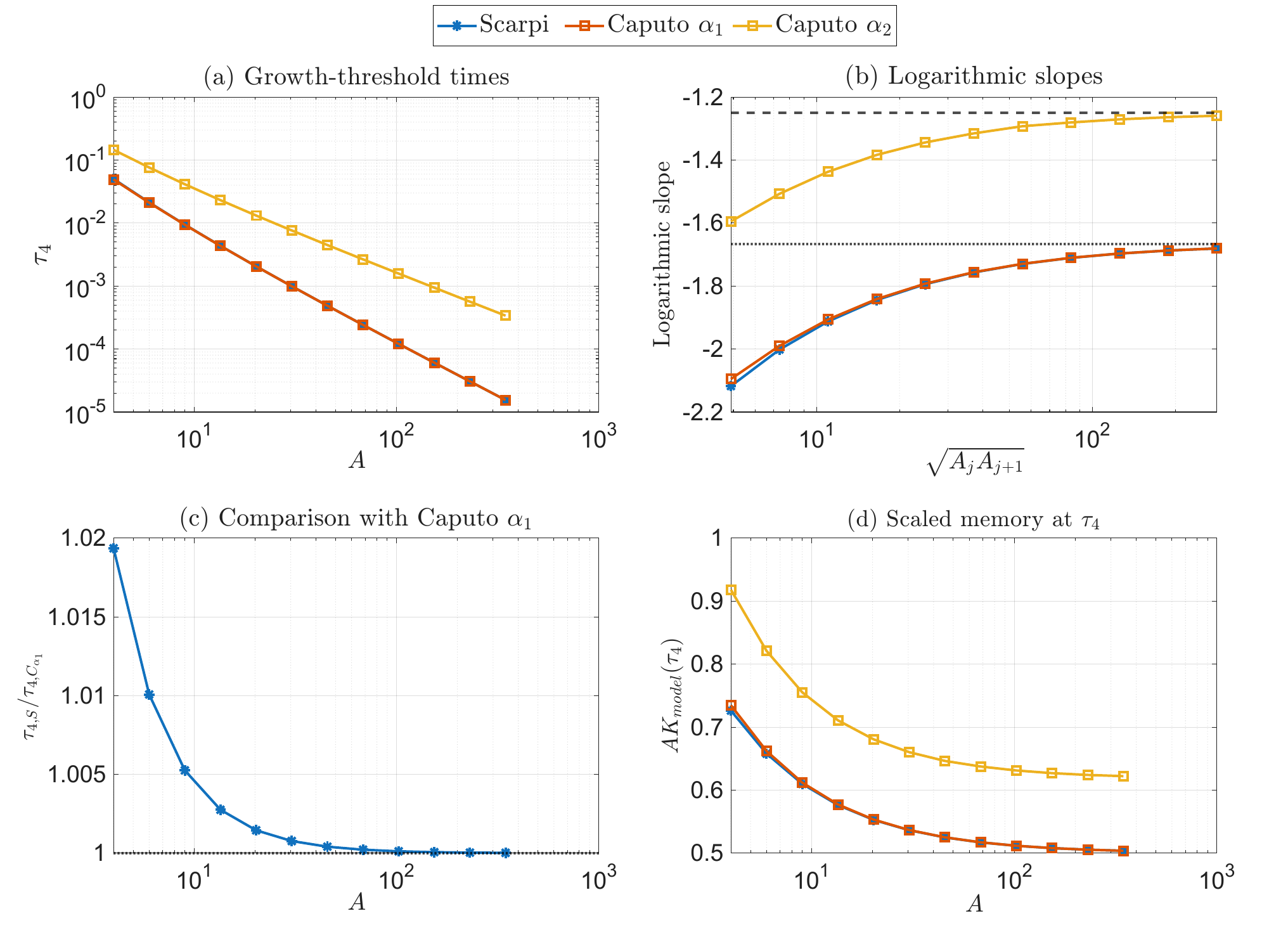}
\caption{Large-amplitude comparison for $u_0=A\e^{-x^2}$.
Parameters are set to $p=2$, $A_j=4(3/2)^j$ with $j=0,\ldots,11$.
Panels (a,b) show $\tau_4$ and its adjacent-amplitude logarithmic slopes, with reference levels $-5/3$ and $-5/4$.
Panels (c,d) show the Scarpi-to-Caputo $\alpha_1$ threshold ratio and $AK_{\rm model}(\tau_4)$.}
\label{fig:endpoint-large}
\end{figure}

To compare the spatial solutions, set
\[
 V_A(x,\sigma)=A^{-1}U(x,A^{-1/\alpha_1}\sigma),
 \qquad 0\le\sigma\le0.1387963.
\]
This common interval precedes every included $\Lambda=2$ crossing.
At each $\sigma$ we compute the relative spatial $L^2$ difference between the Scarpi and Caputo $\alpha_1$ solutions, normalized by the Caputo norm at that $\sigma$, and then take the maximum over the common interval.
This maximum decreases from $2.55\times10^{-3}$ to $6.60\times10^{-6}$.
The combined refinement change of this paired difference, including domain enlargement, is at most $0.54\%$ of the difference.
Figure~\ref{fig:endpoint-large-trajectories} shows the differences and the profiles at the largest amplitude.

The agreement extends from growth-threshold times to the spatial solutions on a common early-time interval.

\begin{figure}[htbp]
\centering
\includegraphics[width=0.75\textwidth]{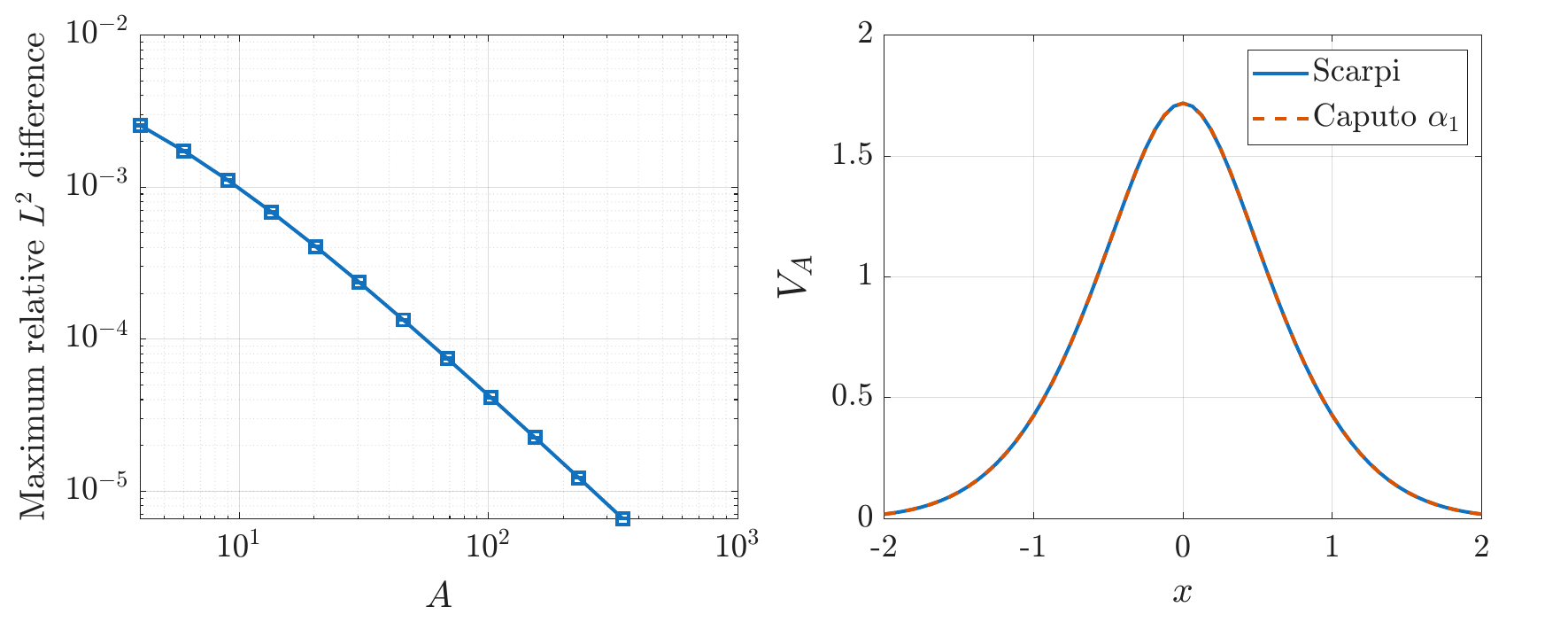}
\caption{Comparison of Scarpi and Caputo $\alpha_1$ solutions.
Left: the time maximum of the relative $L^2$ difference of $V_A$ on $0\le\sigma\le0.1387963$.
The error bars show the larger of the temporal and spatial refinement changes of the paired solution difference, each maximized over time and normalized by the Caputo reference norm.
Right: profiles at $A=4(3/2)^{11}$ and $\sigma=0.1387963$, the upper end of the common interval.}
\label{fig:endpoint-large-trajectories}
\end{figure}

\subsection{Nonlinear rescaling for the long-time order}

The scaling is the constant-order Caputo scaling described in \cite{ZhangSun2015}.
We apply it with $\alpha=\alpha_2$ to keep the rescaled initial profile fixed and compare the Scarpi solutions with a single Caputo reference solution.

For $p=2$ and $b\in\{1,4,16,64,256,1024\}$, take
\[
 u_0(x)=4b^{-\alpha_2/(p-1)}
        \e^{-(x/b^{\alpha_2/2})^2},
 \qquad
 v_b(y,\sigma)=b^{\alpha_2/(p-1)}
              u(b^{\alpha_2/2}y,b\sigma).
\]
Then $v_b(y,0)=4\e^{-y^2}$.
Changing variables in the Volterra formulation gives the kernel $b^{1-\alpha_2}\psi(b\sigma)$ and therefore the symbol
\[
 \Psi_b(z)=b^{-\alpha_2}\Psi(z/b)
 \longrightarrow z^{-\alpha_2}
 \qquad(b\to\infty),
\]
for each $z>0$.
For Caputo $\alpha_2$, the transformed equation is independent of $b$, and its solution is denoted by $v_{C_{\alpha_2}}$.
For Caputo $\alpha_1$, the transformed symbol is $b^{\alpha_1-\alpha_2}z^{-\alpha_1}$.

For the computed solutions, define
\[
 e_b=\max_{0\le\sigma\le\sigma_*}
 \frac{\norm{v_b(\cdot,\sigma)-v_{C_{\alpha_2}}(\cdot,\sigma)}_2}
      {\norm{v_{C_{\alpha_2}}(\cdot,\sigma)}_2},
 \qquad \sigma_*=0.0402391.
\]
The same interval is used for every $b$.
Figure~\ref{fig:endpoint-terminal}(a) shows the relative $L^2$ difference at each $\sigma$, and Table~\ref{tab:endpoint-terminal} reports its maximum $e_b$.
The quantity $\tau_{\Lambda,b,\sigma}$ is the first threshold time at which $\norm{v_b(\cdot,\sigma)}_\infty$ reaches $4\Lambda$, and $\tau_{\Lambda,C_{\alpha_2}}$ is the corresponding Caputo threshold time in $\sigma$.
The threshold time in the original variable is $b\tau_{\Lambda,b,\sigma}$.

The maximum relative $L^2$ difference $e_b$ decreases over the listed $b$ from approximately $0.7838$ at $b=1$ to approximately $0.0066$ at $b=1024$.
The largest ratio of the combined temporal, spatial and domain refinement changes to $e_b$ is approximately $6.2799\%$.
Each individual refinement difference is normalized by that model's refined solution at the same $\sigma$, after restriction to the original spatial nodes.
For the dashed portions of Figure~\ref{fig:endpoint-terminal}(a), these changes are summed over temporal, spatial and domain refinement and both models at each $\sigma$. 

The threshold ratios exhibit a nonmonotone transition.
For $\Lambda=4$, the ratio increases from approximately $0.3446$ at $b=1$ to approximately $1.0660$ at $b=64$, then decreases to approximately $1.0229$ at $b=1024$.
The ratios for $\Lambda=2,8$ show the same crossing of one in Figure~\ref{fig:endpoint-terminal}.
Thus the Scarpi solution reaches the prescribed growth levels earlier than Caputo $\alpha_2$ for smaller $b$ and later for intermediate $b$, before the ratios decrease toward one at the largest values of $b$.

For the refinement comparisons, differences are evaluated on the original spatial grid and normalized by the corresponding refined solution at the same time.
In Figure~\ref{fig:endpoint-terminal}(a), the temporal, spatial and domain contributions for both models are summed at each $\sigma$.
Dashed portions indicate where this sum exceeds $20\%$ of the Scarpi--Caputo relative $L^2$ difference.

These results give numerical evidence for the long-time Caputo comparison on a common rescaled interval, with a nonmonotone transition in growth times.
The initial profile becomes both smaller and wider.
We next keep its width fixed to examine the small-amplitude regime of Theorem~\ref{thm:small-upper}.

\begin{table}[htbp]
\centering
\small
\caption{Growth-threshold times and solution differences under the long-time rescaling for $p=2$ and $(\alpha_1,\alpha_2,c)=(0.6,0.8,1)$.
The last column sums the temporal, spatial and domain refinement changes of the two compared solutions.
Each change is normalized by its refined reference solution and maximized over the common time interval before the six maxima are summed.}
\begin{tabular}{rrrrr}
\toprule
$b$ & $\tau_{4,b,\sigma}/\tau_{4,C_{\alpha_2}}$ &
$b\tau_{4,b,\sigma}$ & $e_b$ & Combined refinement change\\
\midrule
1 & 0.3446 & 0.0502 & 0.7838 & 0.0006\\
4 & 0.5741 & 0.3344 & 0.2249 & 0.0009\\
16 & 0.9147 & 2.1308 & 0.0424 & 0.0011\\
64 & 1.0660 & 9.9331 & 0.0165 & 0.0009\\
256 & 1.0518 & 39.2028 & 0.0151 & 0.0007\\
1024 & 1.0229 & 152.4992 & 0.0066 & 0.0004\\
\bottomrule
\end{tabular}
\label{tab:endpoint-terminal}
\end{table}

\begin{figure}[htb]
\centering
\includegraphics[width=0.92\textwidth]{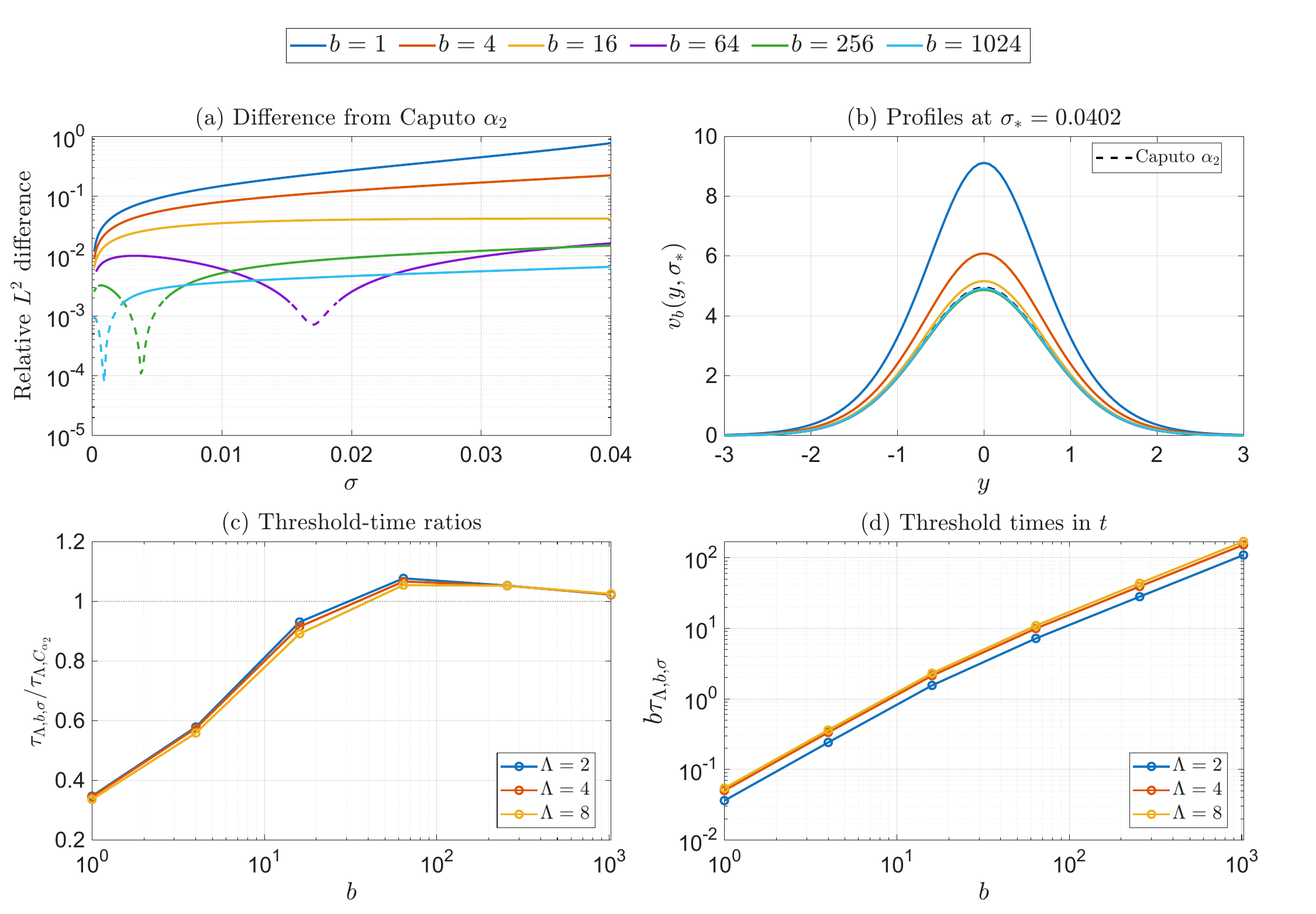}
\caption{Nonlinear rescaling for the long-time order.
Results are shown for $p=2$, $v_b(y,0)=4\e^{-y^2}$ and $b=1,4,16,64,256,1024$.
Panel (a) shows the relative $L^2$ difference at each $\sigma$, whose maximum over the common interval is $e_b$.
Dashed portions indicate times at which the summed relative refinement changes exceed $20\%$ of the plotted relative $L^2$ difference.
Panel (b) shows profiles at $\sigma_*=0.0402391$, with Caputo $\alpha_2$ in black dashes.
Panels (c,d) show threshold ratios in $\sigma$ and the corresponding times in $t=b\sigma$.}
\label{fig:endpoint-terminal}
\end{figure}

\subsection{Fixed-profile amplitude and transition-rate effects}

\paragraph{Fixed-profile small amplitudes.}
We now vary the amplitude without changing the Gaussian width.
For $u_0=A_\varepsilon\e^{-x^2}$ with $p=\frac32$ and $A_\varepsilon=2^{-j}$, $j=0,\ldots,4$, Theorem~\ref{thm:small-upper} motivates $A_\varepsilon^{2/3}K_{\rm model}(\tau_4)$.
The exponent $\frac23$ is $\frac{2(p-1)}{2-n(p-1)}$ at $n=1$ and $p=\frac32$.
The corresponding lifespan upper-bound powers are $-5/6$ for Scarpi and Caputo $\alpha_2$, and $-10/9$ for Caputo $\alpha_1$.

As $A_\varepsilon$ decreases from $1$ to $0.0625$, the Scarpi-to-Caputo $\alpha_2$ threshold ratio changes from $0.8668$ to $1.0679$ (Table~\ref{tab:endpoint-fixed}), while $A_\varepsilon^{2/3}K(\tau_4)$ changes from $1.5580$ to $2.0118$, see Figure~\ref{fig:endpoint-small}(b).
Figure~\ref{fig:endpoint-small} also shows the minimum of $\norm{U(t)}_\infty/A_\varepsilon$ over the computed interval.
Smaller initial amplitudes undergo a larger relative reduction of the peak before nonlinear growth.
The fixed-width comparisons thus retain finite-amplitude departures from the long-time Caputo model, separately from the previous rescaling.

\paragraph{Amplitude--rate effects.}
The effect of the transition rate on growth depends on the initial amplitude.
For $p=2$, we compare $u_0=A\e^{-x^2}$ with $A\in\{4,1,0.25\}$ and $c\in\{10^{-1/4},1,10\}$ against both Caputo models.
The ratios
\[
 \frac{\tau_4(c=10)}{\tau_4(c=10^{-1/4})}
 =1.2032,\quad1.1946,\quad0.9265
\]
correspond to $A=4,1,0.25$, respectively.
Increasing $c$ from $10^{-1/4}$ to $10$ delays the threshold at $A=4,1$, but advances it at $A=0.25$.
This reversal supplements the inverse-$K$ estimates, which give lifespan bounds.
An amplitude-dependent order effect also occurs in the scalar Caputo equation with reaction $u^2$ \cite[Sect.~6.3, Fig.~2]{FengLiLiuXu2018}.

The first two rows of Figure~\ref{fig:endpoint-rate} compare the normalized peak against $t$ and against $AK_{\rm model}(t)$.
The second coordinate uses each model's own integrated memory.
For example, at $A=0.25$, Caputo $\alpha_1$ reaches $4A$ later than Caputo $\alpha_2$, with $\tau_4=85.14$ and $41.60$, while the corresponding $AK_\alpha(\tau_4)$ values are $4.026$ and $5.298$.
The curves therefore do not coincide after this change of time coordinate.
The integrated memory gives a useful growth scale but not a common parameterization of all the peak histories.

The last row compares the shapes $U(x,\tau_4)/(4A)$ at each model's own threshold time for the same peak level $4A$.
The profiles are nearly identical at $A=4$, whereas their width differences are more visible at $A=0.25$.
In the latter case, the Caputo $\alpha_1$ profile is narrower than the Caputo $\alpha_2$ profile, and the displayed Scarpi profiles lie close to the latter.
Together, they show that the endpoint orders organize the growth scales, while the effect of the memory transition on timing and spatial shape depends on the initial amplitude.

\begin{table}[htbp]
\centering
\small
\caption{Growth-threshold times for fixed-width Gaussian initial profiles $u_0=\norm{u_0}_\infty\e^{-x^2}$, $(\alpha_1,\alpha_2,c)=(0.6,0.8,1)$ and $\Lambda=4$.
The $\tau_4$ column and both ratio numerators are for Scarpi with $c=1$.
The denominators are the Caputo $\alpha_1$ and $\alpha_2$ threshold times for the same initial amplitude and $p$.}
\begin{tabular}{rrrrr}
\toprule
$p$ & $\norm{u_0}_\infty$ & $\tau_4$ &
$\tau_4/\tau_{4,C_{\alpha_1}}$ & $\tau_4/\tau_{4,C_{\alpha_2}}$\\
\midrule
1.5 & 1 & 1.7365 & 1.0935 & 0.8668\\
1.5 & 0.5 & 3.5304 & 1.0301 & 0.97735\\
1.5 & 0.25 & 6.8081 & 0.91563 & 1.042\\
1.5 & 0.125 & 12.629 & 0.7805 & 1.067\\
1.5 & 0.0625 & 22.906 & 0.6494 & 1.0679\\
2 & 4 & 0.050161 & 1.0194 & 0.34455\\
2 & 1 & 1.5316 & 1.0972 & 0.81467\\
2 & 0.25 & 43.731 & 0.51365 & 1.0511\\
\bottomrule
\end{tabular}
\label{tab:endpoint-fixed}
\end{table}

\begin{figure}[htbp]
\centering
\includegraphics[width=\textwidth]{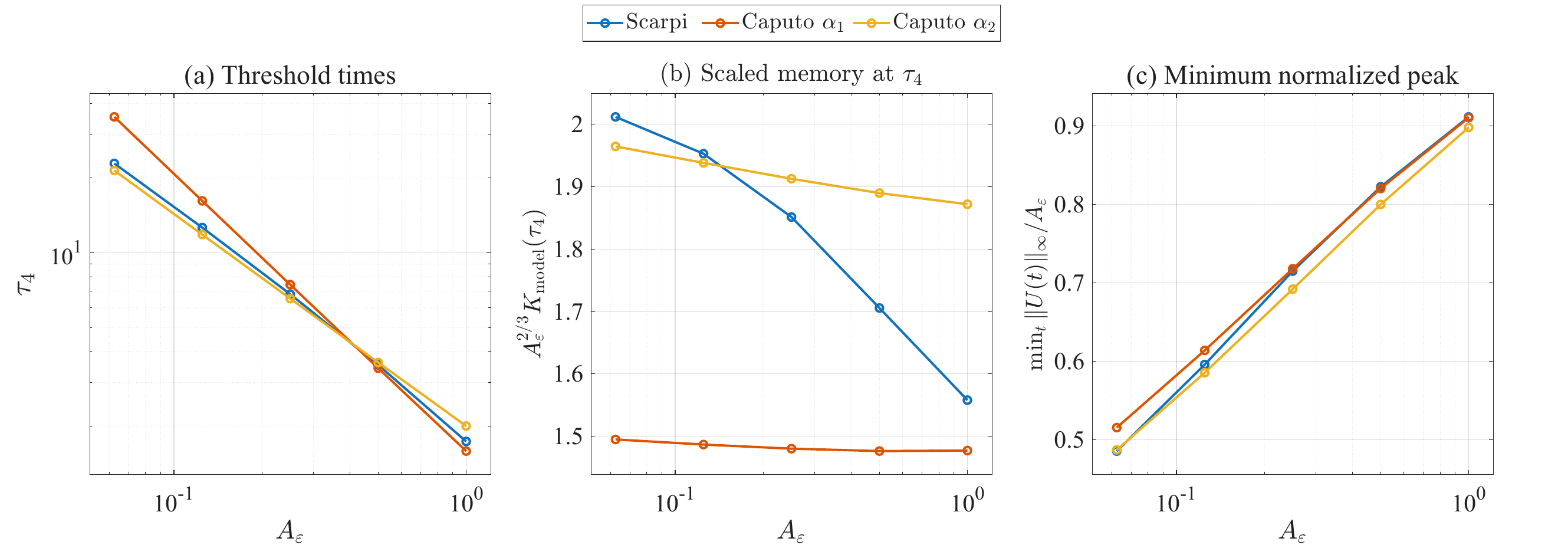}
\caption{Fixed-profile comparison for $u_0=A_\varepsilon\e^{-x^2}$, $p=3/2$, $A_\varepsilon=2^{-j}$ with $j=0,\ldots,4$.
The panels show $\tau_4$, $A_\varepsilon^{2/3}K_{\rm model}(\tau_4)$, and the minimum peak divided by $A_\varepsilon$ over all saved time levels from $t=0$ to the end of each computed interval, the first discrete level reaching $\Lambda=8$.}
\label{fig:endpoint-small}
\end{figure}

\begin{figure}[htb]
\centering
\includegraphics[width=0.95\textwidth]{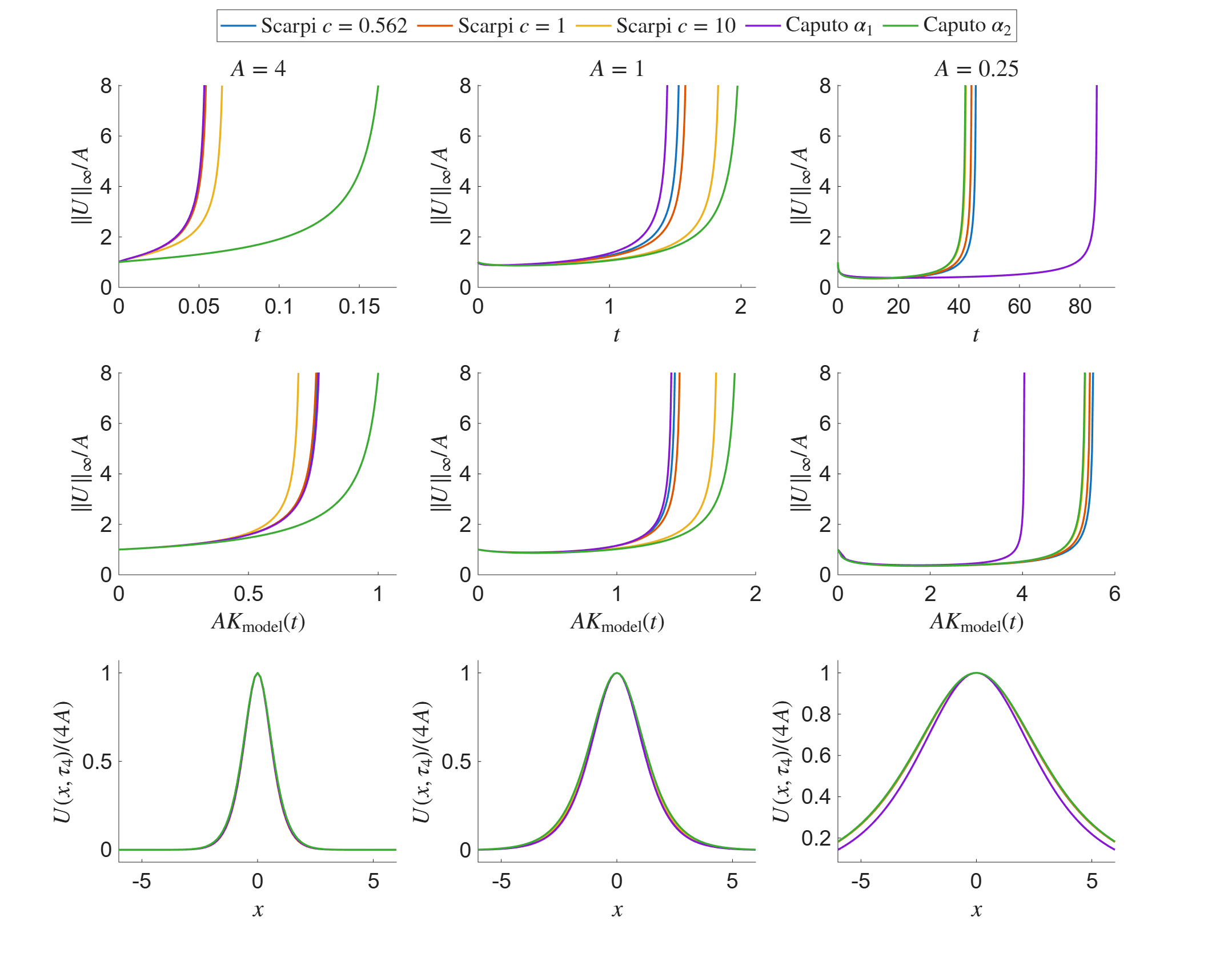}
\caption{Amplitude--rate comparison for $u_0=A\e^{-x^2}$ and $p=2$.
The columns use $A=4,1,0.25$ from left to right.
Scarpi curves have $(\alpha_1,\alpha_2)=(0.6,0.8)$ and $c=10^{-1/4},1,10$.
The rows show the peak divided by $A$ against $t$, the same peak histories against each model's own $AK_{\rm model}(t)$, and profiles divided by $4A$ on $|x|\le6$ at each model's own $\tau_4$ for the same peak level $4A$.}
\label{fig:endpoint-rate}
\end{figure}

\section{Conclusion}
\label{sec:conclusion}

The abstract Scarpi--Volterra framework gives local and maximal mild well-posedness, continuous dependence and continuation with the full memory history.
For the semilinear heat equation, Bernstein admissibility supplies the positivity and exact spatial masses needed for comparison and the Gaussian weighted-moment argument.
This connects the general solution theory to the nonlinear blow-up mechanism.

The Gaussian reduction proves universal blow-up for $1<p<1+2/n$ and blow-up for sufficiently large initial profiles for every $p>1$.
The lifespan estimates are expressed through the inverse of the integrated memory $K$.
They give the matching large-amplitude law $T_A\asymp A^{-(p-1)/\alpha_1}$ and a subcritical small-amplitude upper bound governed by $\alpha_2$.
The spatial balance determines the universal subcritical range, while the two memory regimes determine the powers in these estimates.

The numerical comparisons reveal additional features of the transition.
The effective memory slope can leave the interval between the nominal orders.
Large-amplitude thresholds and rescaled solutions become closer to the initial-order Caputo model over the computed range.
Under the long-time nonlinear rescaling, the solution differences from Caputo $\alpha_2$ decrease on a common interval, while the threshold ratios cross one nonmonotonically.
For fixed-width initial profiles, the effect of increasing $c$ reverses over the sampled amplitudes, and equal-peak comparisons show differences in spatial spreading.
These observations show that the endpoint lifespan scales coexist with nonmonotone, amplitude-dependent growth effects between them.
A sharp small-amplitude lifespan law and the critical and supercritical global-existence cases remain analytical questions for the Scarpi problem.

\appendix
\section{Numerical screening of Scarpi parameters}
\label{app:bernstein-screen}

This appendix records the finite-range checks used to select the variable-order parameters in Section~\ref{sec:numerics}.
The procedure follows the Laplace-inversion approach to exponential Scarpi transitions in \cite{BeghinCristofaroGarrappa2024}.
For $0<\alpha_1,\alpha_2<1$ and $c>0$, define $F(s):=\Psi(s)^{-1}=s^{(\alpha_2c+\alpha_1s)/(c+s)}$.
Then, we give the following necessary conditions for $F$ to be a Bernstein function.

\begin{proposition}
\label{prop:BF-screen}
Let $d_\alpha=\alpha_2-\alpha_1$ and $L_c=\log c$.
If $F$ is a Bernstein function, then
\begin{align}
A_1 &= 2(\alpha_1 + \alpha_2)-d_\alpha L_c\ge0,
\label{eq:BF1}\\
A_2 &= d_\alpha^2L_c^2+4d_\alpha(1-\alpha_1-\alpha_2)L_c-8d_\alpha
-4(\alpha_1 + \alpha_2)(2-\alpha_1-\alpha_2)\le0.
\label{eq:BF2}
\end{align}
\end{proposition}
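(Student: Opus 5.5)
The plan is to use the defining properties of a Bernstein function only at the single point $s=c$. Both inequalities are then sign conditions on the first two derivatives of $F$ there. Recall that a Bernstein function is nonnegative with completely monotone derivative. In particular, $F'(s)\ge0$ and $F''(s)\le0$ for all $s>0$.

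Since $F>0$ on $(0,\infty)$, we write $F=\e^{g}$ with
\[
g(s)=B(s)\log s,\qquad B(s)=\frac{\alpha_2c+\alpha_1s}{c+s}.
\]
Then $F'=Fg'$ and $F''=F\bigl(g''+(g')^2\bigr)$. The necessary conditions therefore become $g'(c)\ge0$ and $g''(c)+g'(c)^2\le0$.

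From \eqref{eq:B-difference} we have $B(s)=\alpha_1+c(\alpha_2-\alpha_1)/(s+c)$. This gives
\[
B'(s)=-\frac{c\,d_\alpha}{(c+s)^2},\qquad B''(s)=\frac{2c\,d_\alpha}{(c+s)^3}.
\]
At $s=c$ these become $B(c)=(\alpha_1+\alpha_2)/2$, $B'(c)=-d_\alpha/(4c)$ and $B''(c)=d_\alpha/(4c^2)$.

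Then
\[
g'(c)=B'(c)L_c+\frac{B(c)}{c}=\frac{A_1}{4c},
\]
so $F'(c)\ge0$ is exactly \eqref{eq:BF1}.

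Next,
\[
g''(c)=B''(c)L_c+\frac{2B'(c)}{c}-\frac{B(c)}{c^2}=\frac{d_\alpha L_c-2d_\alpha-2(\alpha_1+\alpha_2)}{4c^2}.
\]
Multiplying $g''(c)+g'(c)^2\le0$ by $16c^2$ gives
\[
4d_\alpha L_c-8d_\alpha-8\sigma+A_1^2\le0,\qquad \sigma=\alpha_1+\alpha_2.
\]
Expanding $A_1^2=4\sigma^2-4\sigma d_\alpha L_c+d_\alpha^2L_c^2$ and collecting terms should reproduce $A_2$. The $L_c$ coefficient is $4d_\alpha(1-\sigma)$, and the constant is $-8d_\alpha-8\sigma+4\sigma^2=-8d_\alpha-4\sigma(2-\sigma)$. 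This gives \eqref{eq:BF2}.

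The only real obstacle is spotting that $s=c$ is the right evaluation point, where $B$, $B'$ and $B''$ are all simple. After that, the argument is the bookkeeping in the final expansion, which I would check carefully. No positivity result from earlier in the paper is needed; the argument uses only the definition of a Bernstein function.
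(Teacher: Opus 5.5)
Your proposal is correct and takes the same route as the paper: write $F=e^{g}$, evaluate $g'(c)=A_1/(4c)$ and $16c^2F''(c)/F(c)=16c^2\bigl(g''(c)+g'(c)^2\bigr)=A_2$ at $s=c$, and read off both inequalities from $F'\ge0$ and $F''\le0$. Your values of $B(c)$, $B'(c)$, $B''(c)$ and your expansion of $A_1^2$ are correct, and they fill in the ``direct differentiation'' step that the paper leaves implicit.
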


\begin{proof}
Write $\mathfrak g(s)=\log(F(s)) =[\alpha_1+d_\alpha c/(c+s)]\log s$.
Direct differentiation at $s=c$ gives
\[
\mathfrak g'(c)=\frac{2(\alpha_1 + \alpha_2)-d_\alpha L_c}{4c}
\]
and
\[
16c^2
\frac{F''(c)}{F(c)}
=d_\alpha^2L_c^2+4d_\alpha(1-\alpha_1-\alpha_2)L_c-8d_\alpha
-4(\alpha_1 + \alpha_2)(2-\alpha_1-\alpha_2).
\]
A Bernstein function satisfies $F'(s)\ge0$ and $F''(s)\le0$, which yields \eqref{eq:BF1}--\eqref{eq:BF2}. 
\end{proof}

These necessary conditions provide the first check.
The tables report $A_1$ and $-A_2$ as the corresponding nonnegative margins.
For \(m\ge1\), define
\[
D_m(s)
=
(-1)^{m-1}
\frac{s^m F^{(m)}(s)}{F(s)},
\]
and
\[
D_{\min}^{(20)}
=
\min_{\substack{1\le m\le20\\10^{-8}\le s/c\le10^8}}
D_m(s),
\]
using logarithmically spaced points and local refinement near detected minima.

For the scalar-kernel check, let
\[
 \ell(t)=\LT^{-1}\!\left\{
 \frac{d}{dz}F(z)\right\}(t)
 =-t\phi'(t),
 \qquad t>0,
\]
and
\[
 \alpha_\sharp(t)=
 \begin{cases}
  \alpha_1,&ct\le1,\\
  \alpha_2,&ct>1.
 \end{cases}
\]
This piecewise exponent is used to normalize the endpoint powers.
Define
\[
  \widetilde\psi(t)
  =\Gamma(\alpha_\sharp(t))
    t^{1-\alpha_\sharp(t)}\psi(t),\quad
  \widetilde\phi(t)
  =\Gamma(1-\alpha_\sharp(t))
    t^{\alpha_\sharp(t)}\phi(t),\quad
  \widetilde\ell(t)
  =\frac{\Gamma(1-\alpha_\sharp(t))}{\alpha_\sharp(t)}
    t^{\alpha_\sharp(t)}\ell(t),
\]
and let
\[
 P_{\min}
 =\min_{\substack{10^{-6}\le ct\le10^6}}
 \{\widetilde\psi(t),\widetilde\phi(t),\widetilde\ell(t)\}.
\]

For each sampled $\lambda>0$, define
\[
 R_\lambda(t)=\LT^{-1}\!\left\{
 \frac{\Psi(z)^{-1}}{z(\Psi(z)^{-1}+\lambda)}\right\}(t),
 \qquad
 Y_\lambda(t)=\LT^{-1}\!\left\{
 \frac1{\Psi(z)^{-1}+\lambda}\right\}(t).
\]
Let $\mathcal T_{\rm mod}=\{t_j\}$ be the increasing sequence of sampled times in $10^{-5}\le ct\le10^5$.
The quantity $\varepsilon_{\rm res}$ is the maximum, over the sampled values of $\lambda$, of
\[
 \begin{gathered}
 \frac{\|(-R_\lambda)_+\|_{\ell^\infty(\mathcal T_{\rm mod})}}
      {\max\{1,\|R_\lambda\|_{\ell^\infty(\mathcal T_{\rm mod})}\}},
 \qquad
 \frac{\|(R_\lambda-1)_+\|_{\ell^\infty(\mathcal T_{\rm mod})}}
      {\max\{1,\|R_\lambda\|_{\ell^\infty(\mathcal T_{\rm mod})}\}},
 \\[0.7ex]
 \frac{\|(-Y_\lambda)_+\|_{\ell^\infty(\mathcal T_{\rm mod})}}
      {\max\{\|Y_\lambda\|_{\ell^\infty(\mathcal T_{\rm mod})},10^{-300}\}},
 \qquad
 \frac{\max_j(R_\lambda(t_{j+1})-R_\lambda(t_j))_+}
      {\max\{1,\|R_\lambda\|_{\ell^\infty(\mathcal T_{\rm mod})}\}}.
 \end{gathered}
\]
These four quantities measure negativity, values above one, and increases of the relaxation mode.

Finally, $\varepsilon_{\rm CQ}$ is the maximum of the normalized $Q$--$2Q$ weight differences, the negative-weight fractions for the $\psi$- and $\phi$-weights, and the normalized decreases of their cumulative sums.
Table~\ref{tab:screening-criteria} gives the sampled ranges and retention criteria.

\begin{table}[htbp]
\centering
\caption{Finite-range checks used to select variable-order parameters.}
\label{tab:screening-criteria}
\begin{tabular}{p{0.23\textwidth}p{0.40\textwidth}p{0.25\textwidth}}
\toprule
Check & Sampled range & Retention criterion\\
\midrule
Necessary conditions
& Conditions \eqref{eq:BF1}--\eqref{eq:BF2}
& $A_1>10^{-12}$, $-A_2>10^{-12}$\\[1mm]
Derivative signs
& $1\le m\le20$, $10^{-8}\le s/c\le10^8$
& $D_{\min}^{(20)}>10^{-8}$\\[1mm]
Scalar kernels
& $10^{-6}\le ct\le10^6$
& $P_{\min}>10^{-6}$, relative change under contour refinement below $10^{-7}$\\[1mm]
Modal resolvents
& $10^{-5}\le ct\le10^5$, $10^{-4}\le\lambda\Psi(c)\le10^4$
& $\varepsilon_{\rm res}<10^{-8}$\\[1mm]
CQ consistency
& $ch\in\{10^{-4},10^{-3},10^{-2},10^{-1},1\}$
& $\varepsilon_{\rm CQ}<10^{-10}$\\
\bottomrule
\end{tabular}
\end{table}

Table~\ref{tab:scarpi-candidates} reports the retained rate family for $(\alpha_1,\alpha_2)=(0.6,0.8)$ and representative endpoint pairs at $c=1$.
The main numerical comparison uses $(0.6,0.8,1)$, and the rate comparison uses $c\in\{10^{-1/4},1,10\}$.
These finite-range checks give numerical candidate parameters, while the theoretical positivity and blow-up results use Assumption~\ref{ass:admissible}.

\begin{table}[htbp]
\centering
\small
\begin{tabular}{cccccccc}
\toprule
$\alpha_1$ & $\alpha_2$ & $c$ & $A_1$ & $-A_2$ &
$D_{\min}^{(20)}$ & $P_{\min}$ & $\varepsilon_{\rm CQ}$\\
\midrule
0.60 & 0.80 & 0.5623 & 2.915 & 4.763 & 0.1295 & 0.1498 & $9.275\times10^{-16}$\\
0.60 & 0.80 & 1.000 & 2.800 & 4.960 & 0.1426 & 0.3664 & $8.882\times10^{-16}$\\
0.60 & 0.80 & 1.778 & 2.685 & 5.131 & 0.1491 & 0.4854 & $8.513\times10^{-16}$\\
0.60 & 0.80 & 3.162 & 2.570 & 5.275 & 0.1534 & 0.6145 & $8.078\times10^{-16}$\\
0.60 & 0.80 & 5.623 & 2.455 & 5.393 & 0.1561 & 0.6813 & $7.205\times10^{-16}$\\
0.60 & 0.80 & 10.00 & 2.339 & 5.485 & 0.1578 & 0.6669 & $6.955\times10^{-16}$\\
\addlinespace[2pt]
0.35 & 0.55 & 1.000 & 1.800 & 5.560 & 0.2152 & 0.7901 & $3.450\times10^{-16}$\\
\addlinespace[2pt]
0.35 & 0.60 & 1.000 & 1.900 & 5.990 & 0.2121 & 0.7548 & $3.506\times10^{-16}$\\
\addlinespace[2pt]
0.40 & 0.60 & 1.000 & 2.000 & 5.600 & 0.2293 & 0.7978 & $2.357\times10^{-16}$\\
\addlinespace[2pt]
0.45 & 0.50 & 1.000 & 1.900 & 4.390 & 0.2453 & 0.9373 & $1.743\times10^{-16}$\\
\addlinespace[2pt]
0.45 & 0.55 & 1.000 & 2.000 & 4.800 & 0.2430 & 0.8855 & $1.589\times10^{-16}$\\
\addlinespace[2pt]
0.50 & 0.55 & 1.000 & 2.100 & 4.390 & 0.2453 & 0.9404 & $1.388\times10^{-16}$\\
\bottomrule
\end{tabular}
\caption{List of numerically screened parameter triples.
The columns give the analytic margins $A_1$ and $-A_2$, the sampled derivative and kernel minima, and the CQ defect.}
\label{tab:scarpi-candidates}
\end{table}
\small
\bibliographystyle{ieeetr}
\bibliography{ref}

\end{document}